\newtheorem{theorem}{Theorem}[section]
\theoremstyle{definition}
\newtheorem{definition}[theorem]{Definition}
\newtheorem{lemma}[theorem]{Lemma}
\newtheorem{corollary}[theorem]{Corollary}
\theoremstyle{remark}
\newtheorem{remark}[theorem]{Remark}
\Crefname{equation}{}{}
\newcommand\A{\mathcal{A}}
\newcommand\B{\mathcal{B}}
\newcommand\Z{\mathbb{Z}}
\renewcommand\d{\mathrm{d}}
\newcommand\N{\mathbb{N}}
\newcommand\R{\mathbb{R}}
\newcommand\Q{\mathbb{Q}}
\newcommand\F{\mathcal{F}}
\newcommand\D{D[0, 1]}
\newcommand\1{\mathbbm{1}}
\newcommand\E{\mathbb{E}}
\newcommand\wt{\widetilde}
\renewcommand\d{\mathrm{d}}
\newcommand\bs\boldsymbol
\DeclareMathOperator\Normal{Normal}
\DeclareMathOperator\Trapezoidal{Trapezoidal}
\DeclareMathOperator\Mixture{Mixture}
\title{Extension of the one-sample Kolmogorov-Smirnov test}
\author{
  Atsushi Komaba%
  \thanks{
    Department of Radiology, Faculty of Medicine, University of Yamanashi.
    email: \href{mailto:fiveseven.lambda@gmail.com}{fiveseven.lambda@gmail.com}.
    orcid: \href{https://orcid.org/0000-0002-4935-3945}{0000-0002-4935-3945}.
  }
  \and Hisashi Johno%
  \thanks{
    Department of Radiology, Faculty of Medicine, University of Yamanashi.
    email: \href{mailto:johnoh@yamanashi.ac.jp}{johnoh@yamanashi.ac.jp}.
    orcid: \href{https://orcid.org/0000-0002-8058-1401}{0000-0002-8058-1401}.
  }
  \and Kazunori Nakamoto%
  \thanks{
    Center for Medical Education and Sciences, Faculty of Medicine, University of Yamanashi.
    email: \href{mailto:nakamoto@yamanashi.ac.jp}{nakamoto@yamanashi.ac.jp}.
    orcid: \href{https://orcid.org/0000-0002-5626-5804}{0000-0002-5626-5804}.
  }
}
\date{}
\begin{document}

\maketitle
\begin{abstract}
  We propose here a new goodness-of-fit test, named the one-sample OVL-$q$ test ($q=1, 2, \ldots$), which can be considered an extension of the one-sample Kolmogorov-Smirnov test (equivalent to the one-sample OVL-1 test).
  We have analyzed the asymptotic properties of the one-sample OVL-2 test statistic and enabled the calculation of asymptotic p-values for the test statistic.
  We further conducted numerical experiments and demonstrated that the one-sample OVL-2 test can sometimes exceed the detection power of conventional goodness-of-fit tests.
\end{abstract}

\noindent MSC2020 subject classifications: Primary 62G10; Secondary 62G20, 62-04.

\noindent Keywords and phrases: Nonparametric statistics, Kolmogorov-Smirnov test, One-sample testing.

\section{Introduction}
The Kolmogorov-Smirnov (KS) test is a nonparametric method used to determine whether a sample originates from a specific probability distribution (one-sample KS test) or to assess whether two samples come from the same distribution (two-sample KS test).
In our previous study, we devised an extended version of the two-sample KS test, named the (two-sample) OVL-$q$ test ($q=1,2,\ldots$), and demonstrated its utility particularly for the two-sample OVL-2 test \cite{{komaba22}}.

In this study, we extended the one-sample KS test using a similar approach to our previous work \cite{{komaba22}}, and named it the one-sample OVL-$q$ test ($q=1,2,\ldots$).
We analyze the asymptotic properties of the one-sample OVL-2 test statistic and calculate its asymptotic p-values.
Furthermore, we assess the detection power of the one-sample OVL-2 test relative to conventional goodness-of-fit tests by conducting numerical experiments.

In this paper, we describe the analytical framework in \Cref{sec:analytical}.
Experimental results are shown in \Cref{sec:experiments}.
Conclusion follows in \Cref{sec:conclusion}.
The proofs of \Cref{thm:D_q_complete_convergence,thm:D_q_distribution,thm:D2_FnF_limit} are given in \Cref{sec:proofs}.
The source code for the experiments in \Cref{sec:experiments} is provided in the \nameref{supplementary-material}.

\subsection*{General notation}\label{sec:notation}
We denote by $\Z$, $\N$, $\N_+$, $\Q$, and $\R$
the sets of integers, nonnegative integers, positive integers, rational numbers, and real numbers, respectively.
If $-\infty \le a \le b \le \infty$ and if there is no confusion,
we write $[a, b] \coloneq \{x : a \le x \le b\}$,
$[a, b) \coloneq \{x : a \le x < b\}$,
$(a, b] \coloneq \{x : a < x \le b\}$,
and $(a, b) \coloneq \{x : a < x < b\}$ as (extended) real intervals.
For $n \in \N_+$, let $\R^n$ be the Euclidean $n$-dimensional space and $\R_\le^n \coloneq \{(v_1, \ldots, v_n) \in \R^n : v_1 \le \cdots \le v_n\}$.
For a topological space $A$, we denote by $\B(A)$ the $\sigma$-algebra of Borel sets in $A$.
For a set $A$, $\# A$ denotes the cardinality of $A$.
For a real function $f$ on a set $A$ and $x,y\in A$, we write $f|_x^y = f(y) - f(x)$.
We denote by $\1_A$ the indicator function of a set $A$.
For a random variable $X$, $\E[X]$ denotes its expectation.

\section{Analytical framework}\label{sec:analytical}
Let $\F$ be the set of distribution functions on $\R$, 
where each $F\in\F$ is nondecreasing, is continuous from the right, 
and satisfies $F(-\infty)\coloneq\lim_{x\to -\infty}F(x) = 0$ and $F(\infty)\coloneq\lim_{x\to\infty}F(x) = 1$.
For $F, G\in\F$ and $q\in\N_{+}$, 
\begin{equation}\label{eq:D_q}
	D_q (F, G) \coloneq 1 - \inf_{\bs{v}\in\R_{\le}^q} r_{F,G}(\bs{v}),
\end{equation} 
where 
\begin{equation}\label{eq:r}
	r_{F,G}(\bs{v}) \coloneq \sum_{i = 0}^q
			\min\bigl\{ F|_{v_i}^{v_{i + 1}}, G|_{v_i}^{v_{i + 1}}\bigr\}
\end{equation}
for $\bs{v} = (v_1,\ldots, v_q)\in\R_{\le}^q$, $v_0 = -\infty$, and $v_{q+1} = \infty$.
Note that $r_{F,G}(\bs{v})\in [0, 1]$ for all $\bs{v}\in\R_\le^q$, so that $D_q (F, G)\in [0, 1]$.

\begin{theorem}\label{thm:KS=D_1}
{\normalfont (See \cite{molnar14} for reference.)}
The KS metric on $\F$ equals $D_1$, that is, 
\begin{equation*}
	\sup_{v\in\R}\lvert F(v)-G(v)\rvert = D_1 (F, G)
	\qquad (F, G\in \F).
\end{equation*}
\end{theorem}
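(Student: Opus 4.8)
The plan is to reduce the identity to an elementary pointwise computation: I will show that, for $q=1$, the function $v\mapsto r_{F,G}(v)$ of \eqref{eq:r} satisfies $r_{F,G}(v) = 1 - \lvert F(v)-G(v)\rvert$ for every $v\in\R$, and then the claim follows at once by taking the infimum over $v$ and invoking \eqref{eq:D_q}.

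First I would unwind the definitions in the case $q=1$. Here $\bs v = (v)$ with the conventions $v_0 = -\infty$ and $v_2 = \infty$, so
\begin{equation*}
  r_{F,G}(v) = \min\bigl\{ F|_{-\infty}^{v}, G|_{-\infty}^{v}\bigr\} + \min\bigl\{ F|_{v}^{\infty}, G|_{v}^{\infty}\bigr\} = \min\{F(v), G(v)\} + \min\{1 - F(v),\, 1 - G(v)\},
\end{equation*}
where the second equality uses the normalization built into $\F$, namely $F(-\infty) = G(-\infty) = 0$ and $F(\infty) = G(\infty) = 1$. Next I would evaluate this expression by splitting into the two cases determined by the sign of $F(v) - G(v)$: if $F(v)\le G(v)$ then $\min\{F(v),G(v)\} = F(v)$ and $\min\{1-F(v),1-G(v)\} = 1-G(v)$, whence the sum equals $1 - (G(v)-F(v)) = 1 - \lvert F(v) - G(v)\rvert$, and the case $F(v)\ge G(v)$ is symmetric. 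This gives $r_{F,G}(v) = 1 - \lvert F(v)-G(v)\rvert$ for all $v\in\R$, hence
\begin{equation*}
  \inf_{v\in\R} r_{F,G}(v) = 1 - \sup_{v\in\R}\lvert F(v)-G(v)\rvert,
\end{equation*}
and substituting into \eqref{eq:D_q} yields $D_1(F,G) = \sup_{v\in\R}\lvert F(v)-G(v)\rvert$, as desired.

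There is no real obstacle in this argument: the theorem reduces to the trivial identity $\min\{a,b\} + \min\{1-a,1-b\} = 1 - \lvert a-b\rvert$ valid for all $a,b\in[0,1]$. The only points deserving a word of care are the extended-real conventions $v_0 = -\infty$, $v_{q+1} = \infty$, and the observation that the two boundary terms $F|_{-\infty}^{v}$ and $F|_{v}^{\infty}$ are genuinely $F(v)$ and $1 - F(v)$ — which is precisely where the defining properties $F(-\infty)=0$, $F(\infty)=1$ of elements of $\F$ enter.
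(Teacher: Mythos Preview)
Your argument is correct. The paper does not give an explicit proof here but defers to \cite[Proposition~2.7]{komaba22}; your direct computation via the identity $\min\{a,b\}+\min\{1-a,1-b\}=1-\lvert a-b\rvert$ is exactly the expected one, and it is the $q=1$ instance of the formula the paper derives later in \Cref{rem:another_Dq_def}.
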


\begin{theorem}\label{thm:(FD_q)_complete_metric}
For each $q\in\N_+$, $(\F, D_q)$ is a complete metric space.
\end{theorem}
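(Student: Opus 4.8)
The plan is to check first that $D_q$ is a metric and then that $(\F, D_q)$ is complete, after rewriting $r_{F,G}$ in a more tractable form. For $\bs v \in \R_\le^q$ put $I_i \coloneq (v_i, v_{i+1}]$ for $i = 0, \ldots, q$ (with $v_0 = -\infty$, $v_{q+1} = \infty$ as in \eqref{eq:r}), and let $\mu_F, \mu_G$ be the Borel probability measures with distribution functions $F, G$. Then $F|_{v_i}^{v_{i+1}} = \mu_F(I_i)$, the sets $I_0, \ldots, I_q$ partition $\R$, and $\sum_i \mu_F(I_i) = \sum_i \mu_G(I_i) = 1$; applying $\min\{a, b\} = \tfrac12(a + b - |a - b|)$ termwise,
\[
  1 - r_{F, G}(\bs v) = \tfrac12 \sum_{i=0}^q \bigl\lvert \mu_F(I_i) - \mu_G(I_i) \bigr\rvert.
\]
So, writing $\rho_{\bs v}(F, G) \coloneq 1 - r_{F, G}(\bs v)$, we have $D_q(F, G) = \sup_{\bs v \in \R_\le^q} \rho_{\bs v}(F, G)$, and for each fixed $\bs v$ the map $\rho_{\bs v}$ is a pseudometric on $\F$, being half the $\ell^1$-distance in $\R^{q+1}$ between the probability vectors $(\mu_F(I_i))_{i=0}^q$ and $(\mu_G(I_i))_{i=0}^q$.

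From this, nonnegativity and symmetry of $D_q$ are immediate, and the triangle inequality holds because a supremum of pseudometrics is a pseudometric: for $F, G, H \in \F$ and any $\bs v$, $\rho_{\bs v}(F, H) \le \rho_{\bs v}(F, G) + \rho_{\bs v}(G, H) \le D_q(F, G) + D_q(G, H)$, and taking the supremum over $\bs v$ yields $D_q(F, H) \le D_q(F, G) + D_q(G, H)$. Clearly $D_q(F, F) = 0$; conversely, specializing to $\bs v$ with all coordinates equal to a single $v \in \R$ collapses the partition to $\{(-\infty, v], (v, \infty)\}$ and gives $\rho_{\bs v}(F, G) = \lvert F(v) - G(v) \rvert$, so $D_q(F, G) = 0$ forces $F(v) = G(v)$ for all $v$, that is $F = G$. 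The same specialization shows
\[
  D_q(F, G) \ge \sup_{v \in \R} \lvert F(v) - G(v) \rvert = D_1(F, G)
\]
(the equality by \Cref{thm:KS=D_1}), which is all we need for completeness.

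For completeness, let $(F_n)$ be $D_q$-Cauchy. By the displayed inequality it is Cauchy for the uniform norm, hence converges uniformly to some $F \colon \R \to [0, 1]$; monotonicity of $F$ passes to the pointwise limit, and right-continuity together with $F(-\infty) = 0$ and $F(\infty) = 1$ are preserved under uniform convergence, so $F \in \F$. To see $D_q(F_n, F) \to 0$, fix $\varepsilon > 0$ and $N$ with $D_q(F_m, F_n) \le \varepsilon$ for all $m, n \ge N$, and fix $n \ge N$. For any fixed $\bs v \in \R_\le^q$, since $F_m \to F$ pointwise (the values $0$ and $1$ at the endpoints $\pm\infty$ being common to all $F_m$) and $\min$ is continuous, $r_{F_n, F_m}(\bs v) \to r_{F_n, F}(\bs v)$ as $m \to \infty$; hence
\[
  \rho_{\bs v}(F_n, F) = \lim_{m \to \infty} \rho_{\bs v}(F_n, F_m) \le \varepsilon,
\]
using $\rho_{\bs v}(F_n, F_m) \le D_q(F_n, F_m) \le \varepsilon$ for $m \ge N$. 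Taking the supremum over $\bs v$ gives $D_q(F_n, F) \le \varepsilon$ for all $n \ge N$, so $F_n \to F$ in $D_q$.

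The step needing real care is the opening reformulation — in particular the identity $1 - r_{F,G}(\bs v) = \tfrac12 \sum_i \lvert \mu_F(I_i) - \mu_G(I_i) \rvert$ — where one must keep the half-open intervals and the infinite endpoints consistent so that the $I_i$ genuinely partition $\R$ and carry masses $F|_{v_i}^{v_{i+1}}$ and $G|_{v_i}^{v_{i+1}}$. Once that is in place, the triangle inequality reduces to that for $\ell^1$ followed by a supremum, and completeness is the standard argument of being Cauchy in the stronger metric $D_1$, identifying the candidate limit, and passing to the limit in $m$; in particular the infimum in \eqref{eq:D_q} need never be attained.
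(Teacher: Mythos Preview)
Your proof is correct. The overall strategy matches the paper's---verify the metric axioms, compare $D_q$ to $D_1$ from below to locate a uniform limit $F\in\F$, then show $D_q(F_n,F)\to 0$---but the tactics differ in two places. For the triangle inequality, you first rewrite $1-r_{F,G}(\bs v)$ as half an $\ell^1$-distance between probability vectors (the identity the paper records only later, in Remark~5.9) and then observe that a supremum of pseudometrics is a pseudometric; the paper instead proves it directly from the elementary inequality $\min\{a,b\}+\min\{b,c\}\le\min\{a,c\}+b$ (Lemma~5.5). For completeness, the paper establishes the two-sided bound $D_1\le D_q\le qD_1$ (Theorem~5.10), making $D_q$ and $D_1$ uniformly equivalent so that completeness transfers wholesale from $(\F,D_1)$; you use only the lower bound $D_q\ge D_1$ to get the candidate limit and then pass to the limit in $m$ inside each $\rho_{\bs v}(F_n,F_m)$ to obtain $D_q(F_n,F)\le\varepsilon$ directly. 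Your route is a little leaner---it never needs Lemma~5.5 or the upper bound $D_q\le qD_1$---while the paper's equivalence $D_1\le D_q\le qD_1$ is of independent interest (it shows all the $D_q$ generate the same topology) and makes the completeness step a one-liner once it is in hand.
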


\Cref{thm:KS=D_1} can be proved similarly as in the proof of \cite[Proposition 2.7]{komaba22}.
\Cref{thm:(FD_q)_complete_metric} will be proved in \Cref{sec:proof.(FD_q)metric}.
By these theorems, we can see that $D_q$ are extension of the KS metric.
Furthermore, by \Cref{thm:D1_Dq_qD1}, any $D_q$ and $D_1$ generate the same topology on $\F$.

\subsection{One-sample KS test and its extension}\label{sec:1sample_OVL}
As a null hypothesis $H_0$, we assume that $X_1, \ldots, X_n$ are independent and identically distributed (i.i.d.) random variables on a probability space $(\Omega, \A, P)$ with a given distribution function $F\in\F$.
Let $F_n$ be the corresponding empirical distribution function, i.e., 
\begin{equation}\label{eq:F_n_definition}
	F_n(x) \coloneq \frac{1}{n}\sum_{i=1}^n\1_{(-\infty,x]}(X_i)\qquad (x\in\R).
\end{equation}
Here we propose $D_q(F_n, F) \colon \Omega \to \R$ ($q\in\N_+$) as an extension of the one-sample KS test statistic, which equals $D_1 (F_n, F)$ by \Cref{thm:KS=D_1}.
The p-value (function) of the extended test 
is given by 
\begin{equation}
\label{eq:def.p_qmn}
	p_{q,n}(x) \coloneq P(x \le D_q(F_n, F))\qquad (x\in\R),
\end{equation}
and the upper limit of a $100(1-\alpha)$\% confidence interval ($0<\alpha<1$) of $D_q(F_n, F)$ is
\begin{equation}
\label{eq:def.l_qmn}
	u_{q,n}(\alpha) \coloneq \inf\{x\in\R: p_{q,n}(x) < \alpha\}.
\end{equation}
This can be regarded as the one-sample OVL-$q$ test since the (two-sample) OVL-$q$ test statistic $\rho_{q,m,n}$ in \cite[Definition 2.2]{komaba22} equals $1-D_q(F_{0,m}, F_{1,n})$ (see \cite[Definition 2.1]{komaba22}), whose p-value is equal to that of $D_q(F_{0,m}, F_{1,n})$ as described in \cite[Section 2.3]{komaba22}.

\begin{theorem}\label{thm:D_q_complete_convergence}
For each $q\in\N_+$, $D_q(F_n, F)$ converges completely to $0$ as $n\to\infty$, i.e., 
\begin{equation*}
	\sum_{n=1}^\infty P(D_q(F_n, F) > \epsilon) < \infty
\end{equation*}
for any $\epsilon>0$.
\end{theorem}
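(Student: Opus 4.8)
The plan is to reduce the statement to the classical exponential Glivenko--Cantelli estimate by comparing $D_q$ with $D_1$. By \Cref{thm:D1_Dq_qD1} we have $D_q(F_n, F) \le q\,D_1(F_n, F)$, and by \Cref{thm:KS=D_1} the latter equals $q\sup_{x\in\R}\lvert F_n(x) - F(x)\rvert$; hence for every $\epsilon > 0$,
\[
  \{\,D_q(F_n, F) > \epsilon\,\} \subseteq \Bigl\{\,\sup_{x\in\R}\lvert F_n(x) - F(x)\rvert > \epsilon/q\,\Bigr\}.
\]
(Should a self-contained derivation of $D_q \le q D_1$ be preferred: with $h \coloneq F - G$, the identity $\min\{a,b\} = (a+b)/2 - \lvert a-b\rvert/2$ together with $\sum_{i=0}^q F|_{v_i}^{v_{i+1}} = \sum_{i=0}^q G|_{v_i}^{v_{i+1}} = 1$ gives $r_{F,G}(\bs v) = 1 - \tfrac12\sum_{i=0}^q\lvert h(v_{i+1}) - h(v_i)\rvert$; since $h(v_0) = h(v_{q+1}) = 0$ and each of $\lvert h(v_1)\rvert,\dots,\lvert h(v_q)\rvert$ is counted at most twice in the telescoped sum, this is $\ge 1 - q\sup_x\lvert h(x)\rvert$, and taking the infimum over $\bs v\in\R_\le^q$ yields $D_q(F,G)\le q D_1(F,G)$.)

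Next I would estimate the tail of the uniform deviation. The quickest route is the Dvoretzky--Kiefer--Wolfowitz inequality in Massart's form, $P(\sup_{x}\lvert F_n(x) - F(x)\rvert > \delta) \le 2e^{-2n\delta^2}$, valid for all $\delta > 0$ and all $F \in \F$; applied with $\delta = \epsilon/q$ this gives $P(D_q(F_n, F) > \epsilon) \le 2e^{-2n\epsilon^2/q^2}$. Alternatively, a self-contained argument suffices: partition $\R$ at generalized quantiles of $F$ into $\lceil 2q/\epsilon\rceil$ pieces on each of which $F$ increases by at most $\epsilon/(2q)$, bound $F_n - F$ at the finitely many breakpoints (and their left limits) via Hoeffding's inequality for sums of i.i.d.\ indicators, and use monotonicity of $F_n$ and $F$ in between; this produces an estimate $P(D_q(F_n, F) > \epsilon) \le C_{q,\epsilon}\,e^{-c_{q,\epsilon} n}$ with $C_{q,\epsilon}, c_{q,\epsilon} > 0$, which is all that is needed below.

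It then only remains to sum: $\sum_{n\ge 1} P(D_q(F_n, F) > \epsilon) \le \sum_{n\ge 1} 2e^{-2n\epsilon^2/q^2} = 2e^{-2\epsilon^2/q^2}/(1 - e^{-2\epsilon^2/q^2}) < \infty$, which is exactly the asserted complete convergence (and the bound is uniform in $F \in \F$). I do not expect a genuine obstacle here: the whole content is the comparison $D_q \le q D_1$ supplied by \Cref{thm:D1_Dq_qD1} (or the telescoping identity above), followed by a standard exponential empirical-process bound. The one point requiring a little care is that the per-$n$ tail bound must be \emph{exponentially} small in $n$, so that the union bound over the $O(q/\epsilon)$ breakpoints does not spoil summability; this is why DKW or Hoeffding, rather than a mere Chebyshev estimate at each point, is invoked.
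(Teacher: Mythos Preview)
Your proposal is correct and follows essentially the same route as the paper: reduce to $D_1$ via the inequality $D_q \le q D_1$ (the paper's \Cref{lem:Dq_qD1}, which you cite through \Cref{thm:D1_Dq_qD1}), then invoke a known complete-convergence result for $\sup_x\lvert F_n(x)-F(x)\rvert$. The only cosmetic difference is that the paper appeals directly to the complete-convergence form of the Glivenko--Cantelli theorem (its \Cref{thm:Fn_complete_convergence}) as a black box, whereas you supply the summable tail bound explicitly via DKW/Massart; either way the argument is the same two-line reduction.
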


Note that complete convergence implies almost sure convergence, as described in \cite[Remark 4.4]{johno21}.

\begin{theorem}\label{thm:D_q_distribution}
	For each $q\in\N_+$, the distribution of $D_q(F_n, F)$ is the same for all continuous $F\in\F$.
\end{theorem}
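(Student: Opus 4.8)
The plan is to reduce the statement to the claim that for any continuous $F, G \in \F$ with $F$ strictly increasing (the uniform case), there is a measure-preserving identification of the two models. More concretely, the strategy is the classical probability-integral-transform argument, adapted to the fact that $D_q$ depends only on increments of distribution functions over intervals. First I would let $F \in \F$ be continuous and let $X_1, \ldots, X_n$ be i.i.d.\ with distribution function $F$. Set $U_i \coloneq F(X_i)$; by continuity of $F$, each $U_i$ is uniformly distributed on $[0,1]$, and the $U_i$ are independent. Let $G_n$ denote the empirical distribution function of $U_1, \ldots, U_n$ and let $G$ be the uniform distribution function on $[0,1]$ (extended by $0$ and $1$ outside). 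The goal is to show $D_q(F_n, F) = D_q(G_n, G)$ pointwise on $\Omega$, which immediately gives equality in distribution of the statistic across all continuous $F$, since the right-hand side does not depend on $F$.

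The key step is the pointwise identity $D_q(F_n, F) = D_q(G_n, G)$. Here I would argue directly from the definitions \eqref{eq:D_q} and \eqref{eq:r}. For any real $v$, we have $F_n(v) = \frac{1}{n}\sum_i \1_{(-\infty,v]}(X_i) = \frac{1}{n}\sum_i \1_{\{F(X_i) \le F(v)\}}$ — using that $F$ is nondecreasing, so $X_i \le v$ iff $U_i \le F(v)$, with the reverse implication needing a small argument at points where $F$ is flat, handled by continuity and right-continuity. Thus $F_n(v) = G_n(F(v))$ and trivially $F(v) = G(F(v))$. Consequently, for $\bs v = (v_1, \ldots, v_q) \in \R_\le^q$, writing $w_i = F(v_i) \in [0,1]$ (with $w_0 = 0$, $w_{q+1} = 1$), each increment satisfies $F_n|_{v_i}^{v_{i+1}} = G_n|_{w_i}^{w_{i+1}}$ and $F|_{v_i}^{v_{i+1}} = G|_{w_i}^{w_{i+1}}$, so $r_{F_n,F}(\bs v) = r_{G_n,G}(\bs w)$. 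Taking infima, $D_q(F_n,F) \ge 1 - \inf_{\bs w \in \R_\le^q} r_{G_n,G}(\bs w) = D_q(G_n,G)$ would follow once one checks that, conversely, every $\bs w \in \R_\le^q$ with $0 \le w_1 \le \cdots \le w_q \le 1$ is attainable as $\bs w = F(\bs v)$ for some $\bs v \in \R_\le^q$ up to a boundary/approximation argument, and that values of $w_i$ outside $[0,1]$ or equal to the endpoints contribute nothing new to the infimum (intervals of zero $G$-mass and zero $G_n$-mass). Surjectivity of $F$ onto $(0,1)$ holds because $F$ is continuous with limits $0$ and $1$; the endpoints $0$ and $1$ are handled by a limiting argument since $\inf$ over an open-ish set equals $\inf$ over its closure here.

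The main obstacle I anticipate is the careful treatment of the places where $F$ is not strictly increasing: distinct $v$ may give the same $w = F(v)$, and a given $w \in (0,1)$ may be the image of a whole interval of $v$'s. The identity $F_n(v) = G_n(F(v))$ must be verified even then, and one must confirm that the correspondence $\bs v \mapsto F(\bs v)$, while not injective, still gives matching infima in both directions — the forward direction ($D_q(F_n,F) \le D_q(G_n,G)$) is the delicate one, requiring that an optimal or near-optimal $\bs w$ for the uniform model can be pulled back to a valid $\bs v \in \R_\le^q$. I would handle this by noting that $r_{G_n,G}(\bs w)$ depends on $\bs w$ only through the values $w_i$, and for each such configuration one can pick $v_i \in F^{-1}(\{w_i\})$ (nonempty for $w_i \in (0,1)$ by the intermediate value theorem, and choosing $v_i \to \pm\infty$ as $w_i \to 0,1$) respecting the ordering; continuity of $r$ in the relevant sense lets one pass to the infimum. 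Once the pointwise identity is established, the theorem is immediate. A remark could note that the same argument shows $p_{q,n}$ and $u_{q,n}$ are likewise distribution-free over continuous $F$.
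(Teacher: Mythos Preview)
Your approach is essentially the paper's: both reduce to the uniform case via the probability integral transform and the identity $r_{F_n,F}(\bs v)=r_{G_n,G}(F(\bs v))$, then use $F(\R)\supset(0,1)$ to match the two infima. The only cosmetic difference is direction---the paper starts from uniforms $W_i$ on an auxiliary space and sets $X_i'=F^-(W_i)$, proving $D_q(F_n',F)=D_q(U_n,U)$ a.s.\ and then transferring to $D_q(F_n,F)$ by equality of laws, whereas you push forward on the original space via $U_i=F(X_i)$.

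Two small points. First, you have the labels swapped in your last paragraph: the inequality that needs the pull-back of a near-optimal $\bs w$ is $D_q(F_n,F)\ge D_q(G_n,G)$, not $\le$; the $\le$ direction is the immediate one (image of $F$ is a subset of $\R_\le^q$, so the infimum over it is no smaller). Second, the paper devotes a short argument to showing that $D_q(F_n,F)$ is actually a random variable (Borel measurability of $\bs t\mapsto D_q(\Phi_{\bs t},\xi)$, obtained by reducing the infimum over $\R_\le^q$ to the countable set $\Q_\le^q$ via right-continuity); you may want to include a line to that effect, since the theorem speaks of ``the distribution'' of $D_q(F_n,F)$.
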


\begin{theorem}\label{thm:D2_FnF_limit}
If $F\in\F$ is continuous on $\R$, 
\begin{equation}\label{eq:limD2}
\lim_{n\to\infty} P\biggl(D_2(F_n, F) \ge \frac{a}{\sqrt{n}}\biggr) = 2\sum_{i=1}^\infty (4i^2 a^2 -1) \exp(-2i^2 a^2)
\end{equation}
for any $a>0$.
\end{theorem}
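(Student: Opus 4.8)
The plan is to recognize $D_2(F_n,F)$ as the classical one-sample Kuiper statistic and then to read off its limiting law from the known distribution of the range of a Brownian bridge. To set this up, fix $F,G\in\F$ and put $\Delta\coloneq F-G$. Since $\sum_{i=0}^{q}F|_{v_i}^{v_{i+1}}=F(\infty)-F(-\infty)=1$, \eqref{eq:r} rewrites as $1-r_{F,G}(\bs v)=\sum_{i=0}^{q}\bigl(\Delta|_{v_i}^{v_{i+1}}\bigr)^{+}$, and since the increments $\Delta|_{v_0}^{v_1},\dots,\Delta|_{v_q}^{v_{q+1}}$ sum to $\Delta(\infty)-\Delta(-\infty)=0$, the sum of their positive parts equals half the sum of their absolute values, so $1-r_{F,G}(\bs v)=\tfrac12\sum_{i=0}^{q}\bigl|\Delta|_{v_i}^{v_{i+1}}\bigr|$. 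For $q=2$ this is $\tfrac12\bigl(|\Delta(v_1)|+|\Delta(v_2)-\Delta(v_1)|+|\Delta(v_2)|\bigr)$, i.e.\ half the total variation of the closed polygonal path $0\to\Delta(v_1)\to\Delta(v_2)\to0$, which equals $\max\{0,\Delta(v_1),\Delta(v_2)\}-\min\{0,\Delta(v_1),\Delta(v_2)\}$. Taking the supremum over $v_1\le v_2$ and using $\Delta(\pm\infty)=0$ (so that whichever of a point nearly attaining $\sup\Delta$ and a point nearly attaining $\inf\Delta$ comes first on $\R$ may be taken as $v_1$), one obtains
\[
  D_2(F,G)=\sup_{x\in\R}\bigl(F(x)-G(x)\bigr)-\inf_{x\in\R}\bigl(F(x)-G(x)\bigr) ;
\]
in particular $V_n\coloneq D_2(F_n,F)=\sup_x(F_n-F)+\sup_x(F-F_n)$ is the one-sample Kuiper statistic.

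Next, by \Cref{thm:D_q_distribution} the law of $D_2(F_n,F)$ is the same for every continuous $F\in\F$, so it is enough to take $F$ uniform on $[0,1]$; then $F_n$ is the uniform empirical distribution function $G_n$, and $\sqrt n\,V_n=\sup_{t\in[0,1]}\alpha_n(t)-\inf_{t\in[0,1]}\alpha_n(t)$ where $\alpha_n(t)\coloneq\sqrt n\,\bigl(G_n(t)-t\bigr)$. By Donsker's theorem $\alpha_n$ converges weakly in $\D$ to a standard Brownian bridge $B$, and the functional $h\mapsto\sup_{[0,1]}h-\inf_{[0,1]}h$ is continuous (indeed Lipschitz in the uniform norm, and Skorokhod-continuous), so the continuous mapping theorem gives $\sqrt n\,V_n\Rightarrow R\coloneq\sup_{[0,1]}B-\inf_{[0,1]}B$, the range of the Brownian bridge.

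It remains to identify the law of $R$. This is the classical Kuiper result $P(R\ge a)=2\sum_{i=1}^{\infty}(4i^2a^2-1)\exp(-2i^2a^2)$ for $a>0$ (equivalently, the well-known limiting distribution of the one-sample Kuiper statistic): I would either cite it or derive it from the reflection principle for the Brownian bridge, i.e.\ from the joint law of $\bigl(\sup_{[0,1]}B,\ \inf_{[0,1]}B\bigr)$. Since this distribution function is continuous, every $a>0$ is a continuity point, so the weak convergence of the previous paragraph upgrades to $P(\sqrt n\,V_n\ge a)\to P(R\ge a)$; combined with $D_2(F_n,F)=V_n$, this is \eqref{eq:limD2}.

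The reduction to Kuiper's statistic is elementary, and the weak-convergence step is a routine application of Donsker's theorem and the continuous mapping theorem; the one genuinely nontrivial ingredient --- and hence the main obstacle for a self-contained proof --- is the closed-form evaluation of the range distribution, i.e.\ the infinite series that results from reflecting the Brownian bridge. A secondary point demanding care is the final claim of the first paragraph: one should treat separately the cases where the positive excursion of $F_n-F$ precedes the negative one and vice versa, to confirm that the supremum restricted to $\bs v\in\R_{\le}^2$ genuinely recovers the full range of $F_n-F$.
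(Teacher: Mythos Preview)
Your proposal is correct and follows essentially the same route as the paper: establish $D_2(F,G)=\sup_x(F-G)-\inf_x(F-G)$ (the paper's Lemmas in \S\ref{sec:proof_D2_FnF_limit} do this by a case analysis equivalent to your polygonal-path argument), reduce to the uniform case, apply Donsker's theorem together with Skorohod-continuity of the range functional, and then cite the known law of the Brownian-bridge range (the paper cites Dudley's Proposition 12.3.6; you cite it as Kuiper's result). The only cosmetic difference is that the paper spells out the Skorohod continuity of $g\mapsto\sup g-\inf g$ and the $P$-continuity of $[a,\infty)$ in more detail, whereas you summarize these as routine.
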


Note that $P(D_2(F_n, F) \ge a/\sqrt{n})$ in \Cref{eq:limD2} is independent of any continuous $F\in\F$ by \Cref{thm:D_q_distribution}.
By this theorem, we have the asymptotic distribution function of $\sqrt{n} D_2(F_n, F)$:
\begin{equation}\label{eq:lim_sqn_D2}
\lim_{n \to \infty} P(\sqrt{n} D_2(F_n, F) \le a)
= 1 - 2 \sum_{i = 1}^\infty (4 i^2 a^2 - 1) \exp(-2 i^2 a^2).
\end{equation}

See \Cref{sec:proof.thm:D_q_complete_convergence,sec:proof.D_q_distribution,sec:proof_D2_FnF_limit} for the proofs of \Cref{thm:D_q_complete_convergence,thm:D_q_distribution,thm:D2_FnF_limit}, respectively.

\section{Numerical experiments}\label{sec:experiments}
We conducted a computer-based experiment to compare the statistical power of the one-sample OVL-2 test with that of conventional statistical tests, including the one-sample KS test.

Beforehand, we computed $p_{2,n}$, as defined in \cref{eq:def.p_qmn}, using the Monte Carlo method for $n = 2^3, 2^4, \ldots, 2^{12}$,
because exact p-values for the one-sample OVL-2 test could not be computed.
More specifically,
instead of $p_{2,n}$,
we used the empirical distribution function of $D_2(U_n, U)$
computed from 100,000 samples of size $n$ drawn from the standard uniform distribution,
whose distribution function is defined as
\begin{equation}\label{eq:uniform_distribution}
U(x) = \max\{0, \min\{x, 1\}\} \qquad (x\in\R).
\end{equation}
Here, $U_n$ represents $F_n$ in the case $F = U$.
The empirical distribution functions for $n = 2^3, 2^5, 2^7, 2^9$ are shown in \cref{fig:d2_under_h0} together with the theoretical asymptotic distribution function given in \cref{eq:lim_sqn_D2}.

\begin{figure}
  \centering
  \includegraphics[width=\textwidth]{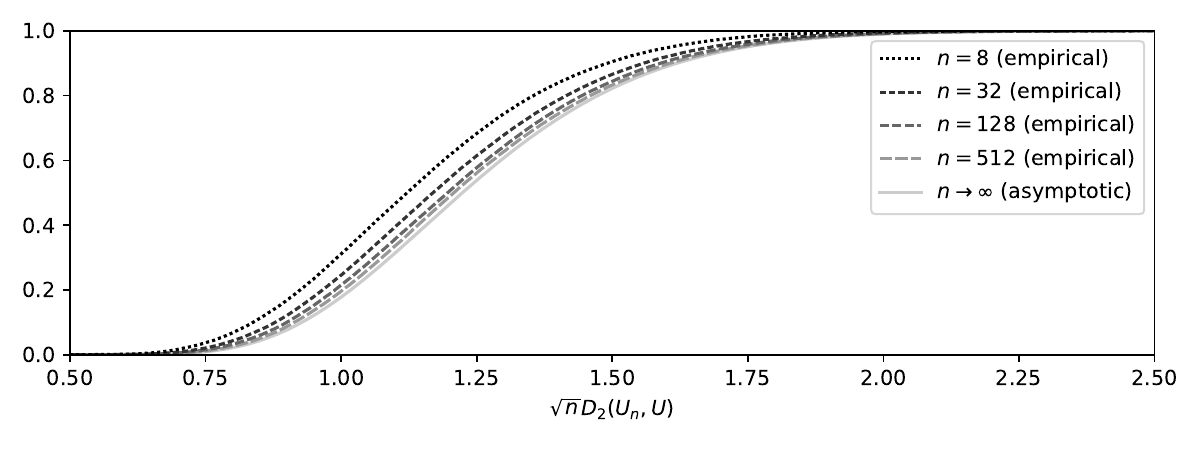}
  \caption{
    For each $n = 8, 32, 128, 512$,
    we generated 100,000 samples of size $n$ following the standard uniform distribution,
    and computed $D_2(U_n, U)$ for each sample.
	The empirical distribution function based on the 100,000 values of $\sqrt{n} D_2(U_n, U)$ is plotted in the graph,
	along with the right hand side of \cref{eq:lim_sqn_D2}, which represents the asymptotic distribution function where $n \to \infty$.
  }
  \label{fig:d2_under_h0}
\end{figure}

The probability density functions used in the experiments are defined as follows:
\begin{alignat*}{2}
\Normal(\mu, \sigma)(x) &= \frac{1}{\sqrt{2\pi} \sigma} \exp(-\frac{(x - \mu)^2}{2\sigma^2}) &&\quad\text{($\mu \in \R$, $\sigma > 0$, $x \in \R$),} \\
\Trapezoidal(x) &= \begin{cases}
  (x + 2) / 2 & \text{if $-2 \le x \le -\sqrt{2}$,} \\
  (2 - \sqrt{2}) / 2 & \text{if $-\sqrt{2} < x \le \sqrt{2}$,} \\
  (-x + 2) / 2 & \text{if $\sqrt{2} < x \le 2$,} \\
  0 & \text{if $x < -2$ or $2 < x$,}
\end{cases} &&\quad\text{($x \in \R$),} \\
\Mixture &= \frac{\Normal(-0.8, 0.6) + \Normal(0.8, 0.6)}{2},
\end{alignat*}
and they are illustrated in \cref{fig:dists_pdf}.

\begin{figure}
  \centering
  \includegraphics[width=\textwidth]{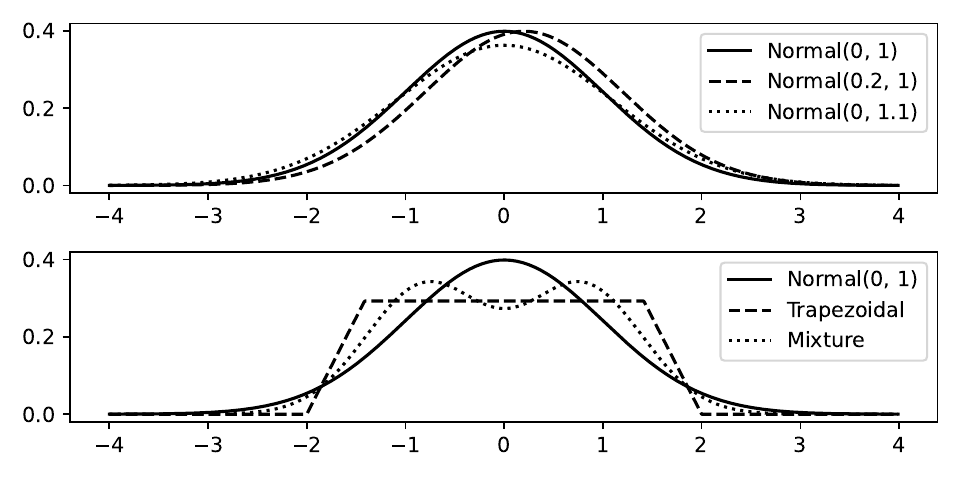}
  \caption{The probability density functions used in the experiments.}
  \label{fig:dists_pdf}
\end{figure}

First, we chose two different distributions as the sampling distribution and the reference distribution.
Then we repeated the following trial 100,000 times for each sample size $n = 2^3, 2^4, \ldots, 2^{12}$:
a random sample of size $n$ was drawn from the sampling distribution,
and tested under the null hypothesis that it were drawn from the reference distribution.
We performed the one-sample OVL-2 test, the one-sample KS test, and the Cram\'er-von Mises test for each sample,
and counted the number of times that the null hypothesis was rejected at 0.05 level of significance.
The rate of rejection out of 100,000 trials was assumed to represent the statistical power of the test.
The entire source code for the experiment, written in Python 3.11.8, is provided as the \nameref{supplementary-material}
on pages \pageref{supplementary-material}--\pageref{supplementary-material-end}.

The result is shown in \cref{fig:result1}.
When the sampling distribution was $\Normal(0.2, 1)$ and the reference distribution was $\Normal(0, 1)$,
the powers of the Cram\'er-von mises test and the one-sample KS test were respectively the first and second highest of the three, while the power of the one-sample OVL-2 test was lower.
When the sampling distribution was $\Normal(0, 1.1)$, $\Trapezoidal$, or $\Mixture$ and the reference distribution was $\Normal(0, 1)$,
the power of the one-sample OVL-2 test was the highest among the three, and the powers of the other two were almost equally lower.
When the sampling distribution was $\Trapezoidal$ and the reference distribution was $\Mixture$, or vice versa,
the power of the one-sample OVL-2 test was the highest, followed by that of the one-sample KS test, and that of the Cram\'er-von Mises test.

\section{Conclusion}\label{sec:conclusion}
In this study, we have developed the one-sample OVL-$q$ test ($q=1,2,\ldots$) as a new goodness-of-fit test, which can also be considered an extended version of the one-sample KS test (because the one-sample KS test is equivalent to the one-sample OVL-1 test).
We analyzed the asymptotic properties of the one-sample OVL-2 test statistic and enabled the calculation of its asymptotic p-values.

We conducted numerical experiments to compare the detection power of the one-sample OVL-2 test with conventional goodness-of-fit tests, including the one-sample KS test.
In several instances, the one-sample OVL-2 test demonstrated superior performance, suggesting its potential utility.

The limitations of this study are as follows:
\begin{itemize}
\item We have not presented a calculation method for the one-sample OVL-$q$ test when $q > 2$.
\item We calculated asymptotic p-values for the one-sample OVL-2 test statistic, but these are not applicable when the sample size is small.
\item We have not proposed a method for calculating p-values when the sample size is small, particularly exact methods.
\end{itemize}
To make the one-sample OVL-2 test practical, these issues need to be addressed in future research.

\begin{figure}
  \includegraphics[width=\textwidth]{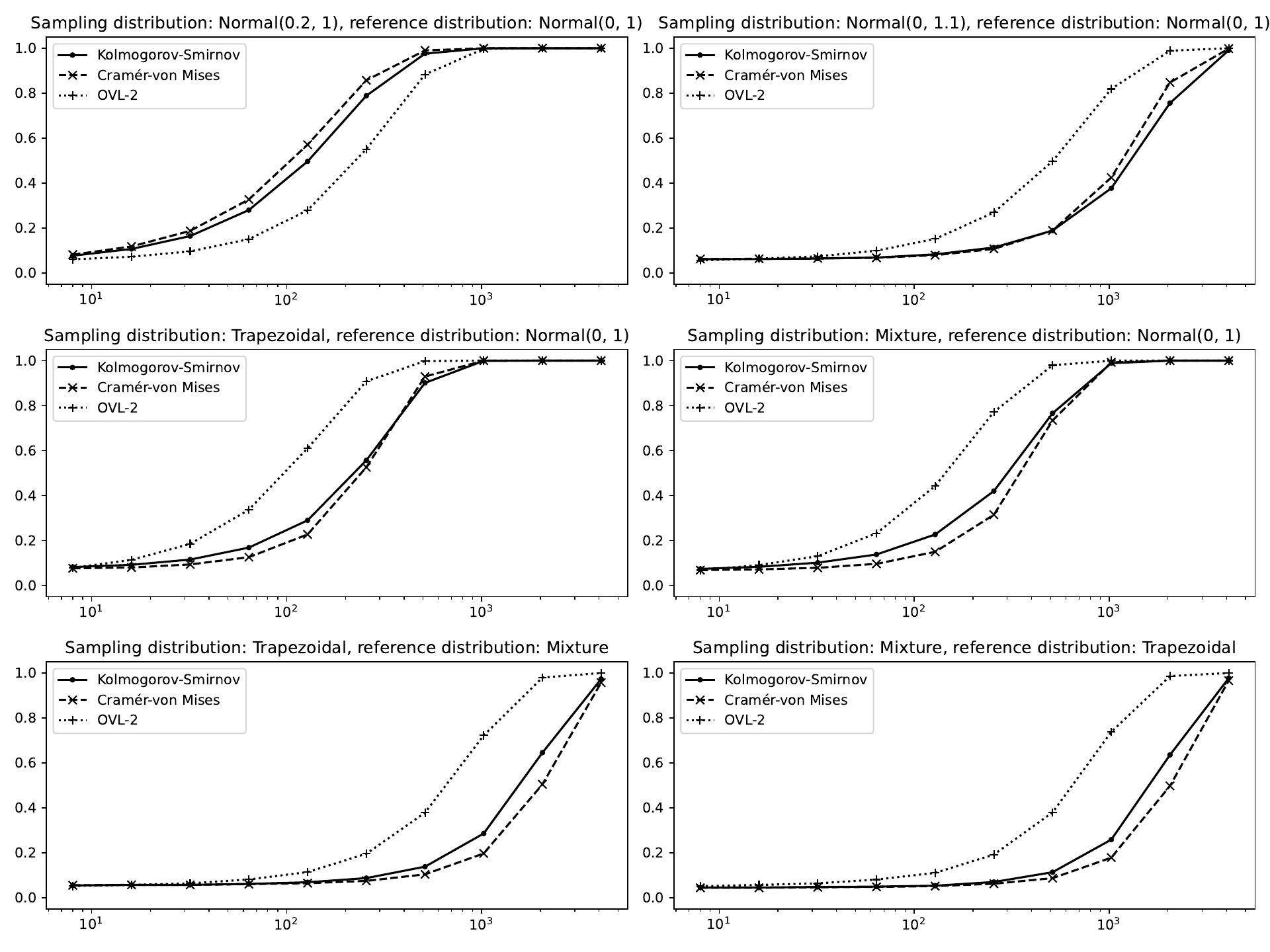}
  \caption{
    The statistical power of the one-sample OVL-2 test to detect the samples from the sampling distribution not following the reference distribution,
    compared to the statistical powers of the one-sample KS test and the Cram\'er-von Mises test.
    The horizontal and vertical axes of every graph represent the sample size and the statistical power, respectively.
  }
  \label{fig:result1}
\end{figure}

\section{Proofs}\label{sec:proofs}
\subsection{Proof of \texorpdfstring{\Cref{thm:(FD_q)_complete_metric}}{Theorem \ref{thm:(FD_q)_complete_metric}}}\label{sec:proof.(FD_q)metric}
Let us denote by $\F'$ the set of bounded right-continuous real functions on $\R$, and by $\lVert{\,\cdot\,}\rVert$ the supremum norm on $\F'$, i.e.,
\begin{equation}
	\lVert\xi\rVert \coloneq \sup_{x\in\R} \lvert\xi(x)\rvert < \infty \qquad (\xi\in\F').
\end{equation}

\begin{remark}
We can easily see that $\F'$ is a normed linear space with norm $\lVert{\,\cdot\,}\rVert$, and that $\F$ is a convex subset of $\F'$.
\end{remark}

\begin{theorem}\label{thm:Fd_complete_metric}
$\F'$ is a Banach space with norm $\lVert{\,\cdot\,}\rVert$.
\end{theorem}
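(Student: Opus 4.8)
The plan is to verify completeness directly, since the fact that $(\F', \lVert{\,\cdot\,}\rVert)$ is a normed linear space is exactly the content of the preceding remark. Let $(\xi_n)_{n\in\N_+}$ be a Cauchy sequence in $\F'$. For each fixed $x\in\R$, the inequality $\lvert \xi_n(x) - \xi_m(x)\rvert \le \lVert \xi_n - \xi_m\rVert$ shows that $(\xi_n(x))_n$ is Cauchy in $\R$; by completeness of $\R$ it converges, and we set $\xi(x) \coloneq \lim_{n\to\infty}\xi_n(x)$. This defines a real function $\xi$ on $\R$, the pointwise limit, and the remaining task is to show that $\xi\in\F'$ and that $\lVert \xi_n - \xi\rVert \to 0$.

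First I would upgrade pointwise convergence to uniform convergence. Given $\epsilon>0$, choose $N$ so that $\lVert \xi_n - \xi_m\rVert \le \epsilon$ for all $n,m\ge N$. Fixing $x$ and $n\ge N$ and letting $m\to\infty$ in $\lvert \xi_n(x) - \xi_m(x)\rvert \le \epsilon$ yields $\lvert \xi_n(x) - \xi(x)\rvert \le \epsilon$; taking the supremum over $x$ gives $\lVert \xi_n - \xi\rVert \le \epsilon$ for all $n\ge N$, i.e.\ $\lVert \xi_n - \xi\rVert \to 0$. Boundedness of $\xi$ is then immediate, since $\lVert \xi\rVert \le \lVert \xi - \xi_N\rVert + \lVert \xi_N\rVert < \infty$.

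It remains to check that $\xi$ is right-continuous, which is the only point requiring a little care, as right-continuity must be shown to survive passage to the uniform limit. Fix $x_0\in\R$ and $\epsilon>0$. Choose $n$ with $\lVert \xi_n - \xi\rVert < \epsilon/3$, and then, using right-continuity of $\xi_n$ at $x_0$, choose $\delta>0$ such that $\lvert \xi_n(y) - \xi_n(x_0)\rvert < \epsilon/3$ whenever $x_0 \le y < x_0 + \delta$. For such $y$, the triangle inequality gives
\[
\lvert \xi(y) - \xi(x_0)\rvert \le \lvert \xi(y) - \xi_n(y)\rvert + \lvert \xi_n(y) - \xi_n(x_0)\rvert + \lvert \xi_n(x_0) - \xi(x_0)\rvert < \epsilon,
\]
so $\xi$ is right-continuous at $x_0$. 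Hence $\xi\in\F'$, and since every Cauchy sequence in $\F'$ converges in $\F'$, the space is complete; together with the remark, $(\F', \lVert{\,\cdot\,}\rVert)$ is a Banach space. I expect no real obstacle here: this is essentially the classical proof that the space of bounded functions with the supremum norm is complete, the only twist being the $\epsilon/3$ argument that one-sided continuity is preserved under uniform limits, and no appeal to left limits or to the convex structure of $\F$ is needed.
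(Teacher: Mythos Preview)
Your proof is correct and follows essentially the same approach as the paper's: both define the pointwise limit, upgrade to uniform convergence by letting $m\to\infty$ in the Cauchy estimate, deduce boundedness, and then use a three-term triangle inequality to transfer right-continuity from some $\xi_n$ to $\xi$. The only cosmetic difference is that you normalize with $\epsilon/3$ whereas the paper ends with a bound of $3\epsilon$.
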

\begin{proof}
Suppose $\{\xi_n\}$ is a Cauchy sequence in $\F'$.
For each $x\in\R$, $\lvert\xi_m(x)-\xi_n(x)\rvert\le \lVert\xi_m-\xi_n\rVert$ implies that $\{\xi_n(x)\}$ is a Cauchy sequence, so that there exists $\xi(x)\coloneq\lim_{n\to\infty}\xi_n(x)\in\R$ by the completeness of the real line.

For any $\epsilon>0$, there exists $N\in\N_+$ such that $m,n\ge N$ implies $\lVert\xi_m-\xi_n\rVert<\epsilon$, so that $\lvert\xi(x) - \xi_n(x)\rvert = \lim_{m\to\infty}\lvert\xi_m(x) - \xi_n(x)\rvert\le \epsilon$ for all $x\in\R$, i.e., $\lVert\xi-\xi_n\rVert\le\epsilon$, which also implies that $\lVert\xi\rVert\le\lVert\xi_n\rVert+\epsilon<\infty$.

For any $\epsilon>0$, there exists $n\in\N_+$ with $\lVert\xi-\xi_n\rVert\le\epsilon$ by the argument above.
For each $x\in\R$, there exists $\delta>0$ such that $\lvert\xi_n(x)-\xi_n(y)\rvert<\epsilon$ for all $y\in (x, x+\delta)$ since $\xi_n$ is right-continuous, so that
\begin{align*}
	\lvert\xi(x)-\xi(y)\rvert
	&= \lvert\xi(x)-\xi_n(x)+\xi_n(x)-\xi_n(y)+\xi_n(y)-\xi(y)\rvert \\
	&\le \lvert\xi(x)-\xi_n(x)\rvert + \lvert\xi_n(x)-\xi_n(y)\rvert + \lvert\xi_n(y)-\xi(y)\rvert \\
	&< 3\epsilon
\end{align*}
for all $y\in (x, x+\delta)$.
Hence $\xi$ is right-continuous.

Now we see that $\xi\in\F'$ and $\lim_{n\to\infty}\lVert\xi-\xi_n\rVert=0$, and the proof is complete.
\end{proof}

\begin{lemma}\label{lem:F_closed}
$\F$ is a closed subspace of $(\F', \lVert{\,\cdot\,}\rVert)$. 
\end{lemma}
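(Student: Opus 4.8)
The plan is to take an arbitrary sequence $\{\xi_n\}$ in $\F$ that converges in $\lVert{\,\cdot\,}\rVert$ to some $\xi\in\F'$ and verify that $\xi$ satisfies the three defining properties of a distribution function: monotonicity, and the correct limits at $\pm\infty$. Right-continuity of $\xi$ comes for free, since $\xi\in\F'$ and $\F'$ is already the space of bounded right-continuous functions (\Cref{thm:Fd_complete_metric}). Throughout I will use that uniform convergence implies pointwise convergence: $\xi_n(x)\to\xi(x)$ for every $x\in\R$.

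First I would establish monotonicity. For $x\le y$ we have $\xi_n(x)\le\xi_n(y)$ for every $n$, because each $\xi_n\in\F$; passing to the pointwise limit gives $\xi(x)\le\xi(y)$. Thus $\xi$ is nondecreasing, and since it is also bounded (it lies in $\F'$), the limits $\xi(-\infty)\coloneq\lim_{x\to-\infty}\xi(x)$ and $\xi(\infty)\coloneq\lim_{x\to\infty}\xi(x)$ exist as finite real numbers.

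Next I would pin these limits down. Fix $\epsilon>0$ and choose $N\in\N_+$ with $\lVert\xi-\xi_N\rVert\le\epsilon$. Since $\xi_N\in\F$, we have $\xi_N(x)\to0$ as $x\to-\infty$; combining this with $\lvert\xi(x)-\xi_N(x)\rvert\le\epsilon$ for all $x\in\R$ and letting $x\to-\infty$ yields $\lvert\xi(-\infty)\rvert\le\epsilon$. As $\epsilon>0$ was arbitrary, $\xi(-\infty)=0$. The identical argument, using $\xi_N(x)\to1$ as $x\to\infty$, gives $\xi(\infty)=1$. Hence $\xi\in\F$, which shows that $\F$ is closed in $(\F',\lVert{\,\cdot\,}\rVert)$.

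The argument is entirely routine; there is no real obstacle. The only point that needs a moment's care is that one should know the one-sided limits of $\xi$ at $\pm\infty$ exist before passing them through the inequality, which is exactly why I would prove monotonicity first. (One could instead phrase the last step with $\limsup$ and $\liminf$ to sidestep this, but monotonicity must be verified anyway to conclude $\xi\in\F$.)
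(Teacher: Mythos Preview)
Your proposal is correct and follows essentially the same route as the paper: take a sequence in $\F$ converging uniformly to some $\xi\in\F'$, deduce monotonicity from pointwise limits, and then use a single $\xi_N$ with $\lVert\xi-\xi_N\rVert\le\epsilon$ together with $\xi_N(\pm\infty)\in\{0,1\}$ to identify $\xi(\pm\infty)$. The only cosmetic difference is that you first argue the one-sided limits of $\xi$ exist (via monotonicity and boundedness) and then evaluate them, whereas the paper bounds $\lvert\xi(x)\rvert$ directly for all sufficiently negative $x$; both are equivalent.
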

\begin{proof}
Let $\{\xi_n\}$ be a convergent sequence in $\F\subset\F'$ and $\xi\coloneq\lim_{n\to\infty}\xi_n\in\F'$.

For each $x\in\R$, $\lvert\xi_n(x)-\xi(x)\rvert \le \lVert\xi_n-\xi\rVert\to 0$ ($n\to\infty$) implies that $\xi (x)=\lim_{n\to\infty}\xi_n(x)$.
Hence $\xi(x)\le\xi(y)$ for any $x<y$, since $\xi_n(x)\le\xi_n(y)$ for all $n$.
This means that $\xi$ is nondecreasing.

For any $\epsilon>0$,  there exists $n\in\N_+$ with $\lVert\xi_n-\xi\rVert<\epsilon$.
Since $\lim_{x\to -\infty}\xi_n (x) = 0$, there exists $M\in\R$ such that $\lvert\xi_n(x)\rvert<\epsilon$ for all $x<M$.
Hence $\lvert\xi(x)\rvert = \lvert\xi(x) - \xi_n(x) + \xi_n(x)\rvert\le \lvert\xi(x) - \xi_n(x)\rvert + \lvert\xi_n(x)\rvert < \lVert\xi - \xi_n\rVert + \epsilon < 2\epsilon$ for all $x<M$.
Therefore, $\lim_{x\to -\infty}\xi (x) = 0$.
The proof for $\lim_{x\to \infty}\xi (x) = 1$ is similar.

Taken together, we have shown that $\xi\in\F$.
\end{proof}

The following theorem follows immediately from \Cref{thm:KS=D_1,thm:Fd_complete_metric,lem:F_closed}.

\begin{theorem}\label{thm:F_D1_complete}
$(\F, D_1)$ is a complete metric space.
\end{theorem}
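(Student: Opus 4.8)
The plan is to identify the metric $D_1$ on $\F$ with the metric induced by the Banach space norm $\lVert\,\cdot\,\rVert$ on $\F'$, and then to invoke the standard fact that a closed subset of a complete metric space is itself complete.

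First I would observe that for any $F, G \in \F$ the difference $F - G$ belongs to $\F'$, since it is bounded and right-continuous, being a difference of two bounded right-continuous functions. Consequently $\lVert F - G\rVert = \sup_{v \in \R} \lvert F(v) - G(v)\rvert$. By \Cref{thm:KS=D_1}, this supremum equals $D_1(F, G)$. Hence $D_1(F, G) = \lVert F - G\rVert$ for all $F, G \in \F$; that is, $D_1$ is exactly the restriction to $\F \times \F$ of the metric on $\F'$ induced by $\lVert\,\cdot\,\rVert$. (In passing this also confirms that $D_1$ is a genuine metric, inheriting the metric axioms from the norm.)

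Next I would recall the elementary fact that a closed subset of a complete metric space, equipped with the restricted metric, is complete: any Cauchy sequence in the subset is Cauchy in the ambient space, hence converges there, and its limit lies in the subset because the subset is closed. I would apply this with the ambient space taken to be $(\F', \lVert\,\cdot\,\rVert)$, which is complete by \Cref{thm:Fd_complete_metric}, and the subset taken to be $\F$, which is closed in $\F'$ by \Cref{lem:F_closed}. Since the metric on $\F$ in question is precisely $D_1$ by the previous paragraph, this yields that $(\F, D_1)$ is a complete metric space.

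There is essentially no obstacle in this argument; the only point requiring a moment's care is the bookkeeping that the KS metric, written in \Cref{thm:KS=D_1} as $\sup_{v \in \R}\lvert F(v) - G(v)\rvert$, coincides literally with the norm distance $\lVert F - G\rVert$ in $\F'$, which is immediate once one notes that $F - G \in \F'$.
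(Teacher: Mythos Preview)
Your proposal is correct and follows essentially the same route as the paper: the paper states that \Cref{thm:F_D1_complete} ``follows immediately from \Cref{thm:KS=D_1,thm:Fd_complete_metric,lem:F_closed},'' which is precisely the argument you spell out (identify $D_1$ with the restricted sup-norm metric, then use closedness of $\F$ in the Banach space $\F'$). Your write-up is just a slightly more detailed version of the same one-line deduction.
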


\begin{remark}
We can see that $(\F, D_1)$ is not separable.
For example, the collection of open subsets \[\{F\in\F: D_1(F, \1_{[a, \infty)})<1/2\}\qquad (a\in\R)\] is pairwise disjoint and uncountable. Here note that $D_1(\1_{[a, \infty)}, \1_{[b, \infty)})=1$ if $a\ne b$.
\end{remark}

\begin{lemma}\label{lem:min_inequality}
For any $a,b,c\in\R$, $\min\{a, b\} + \min\{b, c\} \le \min\{a, c\} + b$.
\end{lemma}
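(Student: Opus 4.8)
The statement to prove is Lemma \ref{lem:min_inequality}: for any $a,b,c\in\R$, $\min\{a,b\}+\min\{b,c\}\le\min\{a,c\}+b$.

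My plan is to dispose of this by a direct case analysis, organized according to the position of $b$ relative to $a$ and $c$. The key observation is that $\min\{x,y\}=y-(y-x)^+$ where $t^+\coloneq\max\{t,0\}$, or more simply that $\min\{a,b\}\le b$ and $\min\{a,b\}\le a$ always hold; the inequality is essentially a statement about how the "$b$-term" on the right absorbs the slack. First I would note it suffices to consider the three mutually exhaustive cases: (i) $b\le\min\{a,c\}$, (ii) $b\ge\max\{a,c\}$, and (iii) $\min\{a,c\}\le b\le\max\{a,c\}$ (the last splitting by symmetry into $a\le b\le c$ and $c\le b\le a$, which behave identically after swapping $a\leftrightarrow c$, since both sides of the claimed inequality are symmetric under that swap).

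In case (i), $\min\{a,b\}=b$ and $\min\{b,c\}=b$, so the left side is $2b$; meanwhile $\min\{a,c\}\ge b$, so the right side is $\min\{a,c\}+b\ge b+b=2b$, giving the inequality. In case (ii), $\min\{a,b\}=a$ and $\min\{b,c\}=c$, so the left side is $a+c$; and since $b\ge\max\{a,c\}\ge\min\{a,c\}$ together with $\max\{a,c\}=a+c-\min\{a,c\}$, we get $\min\{a,c\}+b\ge\min\{a,c\}+\max\{a,c\}=a+c$, again as required. In case (iii), say $a\le b\le c$ (the other sub-case being identical up to relabeling): then $\min\{a,b\}=a$, $\min\{b,c\}=b$, and $\min\{a,c\}=a$, so both sides equal $a+b$ and the inequality holds with equality.

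I do not anticipate any genuine obstacle here — the lemma is elementary and the case analysis is short and self-contained. The only mild bookkeeping point is to make sure the cases are genuinely exhaustive (they are: $b$ either lies below both $a,c$, above both, or between them) and to invoke the $a\leftrightarrow c$ symmetry cleanly so as not to write out a redundant fourth case. One could alternatively give a slicker one-line argument by rewriting everything in terms of the function $t\mapsto t^+$ and the identity $\min\{x,y\}=b-(b-x)^+-\cdots$, but in a paper of this style the transparent case split is preferable and leaves no room for error.
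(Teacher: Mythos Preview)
Your proof is correct. Both you and the paper argue by direct case analysis, so the approach is essentially the same; the only difference is organizational: the paper first invokes the $a\leftrightarrow c$ symmetry to assume $a\le c$ without loss of generality and then splits into just two cases according to whether $a\le b$ or $a\ge b$, whereas you split into three cases according to the position of $b$ relative to $\min\{a,c\}$ and $\max\{a,c\}$. The paper's version is marginally more economical, but your argument is equally valid and perhaps more transparent.
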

\begin{proof}
We can assume that $a\le c$ without loss of generality.
If $a\le b$, then $\min\{a, b\} + \min\{b, c\} = \min\{a, c\} + \min\{b, c\} \le \min\{a, c\} + b$.
If $a\ge b$, then $\min\{a, b\} + \min\{b, c\} = b + \min\{b, c\} \le b + \min\{a, c\}$.
\end{proof}

\begin{theorem}\label{thm:(FD_q)_metric}
For each $q\in\N_+$, $(\F, D_q)$ is a metric space.
\end{theorem}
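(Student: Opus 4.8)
The plan is to verify the four defining properties of a metric for $D_q$ on $\F$. Nonnegativity has already been recorded ($D_q(F,G)\in[0,1]$), and symmetry is immediate: since $\min\{a,b\}=\min\{b,a\}$, the functions $r_{F,G}$ and $r_{G,F}$ on $\R_\le^q$ coincide, so $D_q(F,G)=D_q(G,F)$. For $D_q(F,F)=0$, use $\min\{a,a\}=a$ to get $r_{F,F}(\bs v)=\sum_{i=0}^q F|_{v_i}^{v_{i+1}}=F(\infty)-F(-\infty)=1$ for every $\bs v\in\R_\le^q$ (recall $v_0=-\infty$, $v_{q+1}=\infty$), so the infimum in \Cref{eq:D_q} equals $1$.

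For the identity of indiscernibles I would exploit the elementary bound $\min\{a,b\}\le\tfrac12(a+b)$, which is strict unless $a=b$. Applied termwise to \Cref{eq:r} it gives $r_{F,G}(\bs v)\le\tfrac12\sum_{i=0}^q\bigl(F|_{v_i}^{v_{i+1}}+G|_{v_i}^{v_{i+1}}\bigr)=1$ for every $\bs v$. Hence $D_q(F,G)=0$ forces $\inf_{\bs v}r_{F,G}(\bs v)=1$ and therefore $r_{F,G}(\bs v)=1$ for every $\bs v$, which in turn forces $F|_{v_i}^{v_{i+1}}=G|_{v_i}^{v_{i+1}}$ for all $i$ and all $\bs v$. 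Taking $\bs v=(t,\ldots,t)$ and reading off the $i=0$ term yields $F(t)=G(t)$ for all $t\in\R$, i.e., $F=G$. (Alternatively, collapsing $v_1=\cdots=v_q$ in \Cref{eq:r} shows $D_q(F,G)\ge D_1(F,G)$, and one may then invoke \Cref{thm:KS=D_1}.)

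The one substantive step is the triangle inequality $D_q(F,H)\le D_q(F,G)+D_q(G,H)$, which I would recast as the equivalent inequality
\[
  \inf_{\bs u\in\R_\le^q}r_{F,G}(\bs u)+\inf_{\bs w\in\R_\le^q}r_{G,H}(\bs w)\le 1+\inf_{\bs v\in\R_\le^q}r_{F,H}(\bs v).
\]
The key is to estimate all three functionals at one common $\bs v\in\R_\le^q$: applying \Cref{lem:min_inequality} with $a=F|_{v_i}^{v_{i+1}}$, $b=G|_{v_i}^{v_{i+1}}$, $c=H|_{v_i}^{v_{i+1}}$ for each $i\in\{0,\ldots,q\}$ and summing over $i$ gives
\[
  r_{F,G}(\bs v)+r_{G,H}(\bs v)\le r_{F,H}(\bs v)+\sum_{i=0}^q G|_{v_i}^{v_{i+1}}=r_{F,H}(\bs v)+1,
\]
the last step being the telescoping identity $\sum_{i=0}^q G|_{v_i}^{v_{i+1}}=G(\infty)-G(-\infty)=1$. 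Since $\inf_{\bs u}r_{F,G}(\bs u)\le r_{F,G}(\bs v)$ and $\inf_{\bs w}r_{G,H}(\bs w)\le r_{G,H}(\bs v)$, we obtain $\inf_{\bs u}r_{F,G}(\bs u)+\inf_{\bs w}r_{G,H}(\bs w)\le 1+r_{F,H}(\bs v)$ for every $\bs v$; taking the infimum over $\bs v$ on the right and rearranging yields the triangle inequality.

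I do not expect a genuine obstacle. The two places that need care are: handling the endpoints $v_0=-\infty$ and $v_{q+1}=\infty$ so that each increment $F|_{v_i}^{v_{i+1}}$, $G|_{v_i}^{v_{i+1}}$, $H|_{v_i}^{v_{i+1}}$ is well defined and nonnegative and the telescoping sum is genuinely $1$; and keeping the direction of the inequalities correct when passing from the pointwise estimate at a fixed $\bs v$ to the infima — one must fix a common $\bs v$ first and only afterwards take infima, rather than trying to bound $\inf_{\bs v}\bigl(r_{F,G}(\bs v)+r_{G,H}(\bs v)\bigr)$.
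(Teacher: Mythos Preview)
Your proposal is correct and follows essentially the same route as the paper: symmetry and nonnegativity from the definition, the triangle inequality via \Cref{lem:min_inequality} applied termwise at a common $\bs v$ followed by taking infima, and the identity of indiscernibles by specializing to $\bs v=(t,\ldots,t)$. The only cosmetic difference is that for $D_q(F,G)=0\Rightarrow F=G$ the paper computes $r_{F,G}(t,\ldots,t)=1+F(t)-G(t)<1$ directly when $F(t)<G(t)$, whereas you first deduce equality of all increments from the strict inequality in $\min\{a,b\}\le\tfrac12(a+b)$ and then read off $F(t)=G(t)$ from the $i=0$ term; both arguments are equally short.
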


\begin{proof}
We have to show that, for all $F, G, H\in\F$, 
\begin{enumerate}[label = (\alph*)]
\item $0\le D_q(F, G)<\infty$.\label{enum:D_q_metric_a}
\item $D_q(F, G) = 0$ if and only if $F = G$.\label{enum:D_q_metric_b}
\item $D_q(F, G) = D_q(G, F)$.\label{enum:D_q_metric_c}
\item $D_q(F, G) \le D_q(F, H) + D_q(H, G)$.\label{enum:D_q_metric_d}
\end{enumerate}

\ref{enum:D_q_metric_a} and \ref{enum:D_q_metric_c} follows from definition.

Let us start with \ref{enum:D_q_metric_b}.
If $F=G$, then $r_{F, G}(\bs{v})=1$ for any $\bs{v}\in\R_\le^q$, so that $D_q (F, G) = 0$, by definition.
If $D_q (F, G) = 0$, then $\inf_{\bs{v}\in\R_{\le}^q} r_{F,G}(\bs{v}) = 1$, so that $r_{F,G}(\bs{v}) = 1$ for all $\bs{v}\in\R_{\le}^q$, which implies $F=G$ by the following arguments.
If $F(x) < G(x)$ for some $x\in\R$, then for $\bs{v} = (x,\ldots,x)\in\R_\le^q$, we have 
\begin{align*}
	r_{F,G}(\bs{v}) &= \min\bigl\{F|_{-\infty}^x, G|_{-\infty}^x\bigr\} + \min\bigl\{F|_x^\infty, G|_x^\infty\bigr\}\\
	&= \min\{F(x), G(x) \} + \min\{1-F(x), 1-G(x)\}\\
	&= F(x) - G(x) + 1\\
	&< 1.
\end{align*}
Hence $F\ge G$ if $D_q (F, G) = 0$.
Similarly, $F\le G$ if $D_q (F, G) = 0$, proving \ref{enum:D_q_metric_b}.

As for \ref{enum:D_q_metric_d}, we have 
\begin{align*}
	&\inf_{\bs{v}\in\R_{\le}^q} r_{F,H}(\bs{v}) + \inf_{\bs{v}\in\R_{\le}^q} r_{H,G}(\bs{v})\\
	&= \inf_{\bs{v}\in\R_{\le}^q} \sum_{i = 0}^q\min\bigl\{ F|_{v_i}^{v_{i + 1}}, H|_{v_i}^{v_{i + 1}}\bigr\} + \inf_{\bs{v}\in\R_{\le}^q} \sum_{i = 0}^q\min\bigl\{ H|_{v_i}^{v_{i + 1}}, G|_{v_i}^{v_{i + 1}}\bigr\}\\
	&\le \inf_{\bs{v}\in\R_{\le}^q} \sum_{i = 0}^q
			\bigl(\min\bigl\{ F|_{v_i}^{v_{i + 1}}, H|_{v_i}^{v_{i + 1}}\bigr\} + \min\bigl\{ H|_{v_i}^{v_{i + 1}}, G|_{v_i}^{v_{i + 1}}\bigr\}\bigr)\\
	&\le \inf_{\bs{v}\in\R_{\le}^q} \sum_{i = 0}^q\bigl(\min\bigl\{ F|_{v_i}^{v_{i + 1}}, G|_{v_i}^{v_{i + 1}}\bigr\} +  H|_{v_i}^{v_{i + 1}}\bigr)\\
	&= \inf_{\bs{v}\in\R_{\le}^q} r_{F,G}(\bs{v}) + 1
\end{align*}
by \Cref{eq:r,lem:min_inequality}, so that $D_q(F, G) \le D_q(F, H) + D_q(H, G)$.
This completes the proof.
\end{proof}

\begin{lemma}\label{lem:Dq_le_Dqd}
For any $F, G\in\F$, $D_q(F, G)\le D_{q'}(F, G)$ if $q<q'$.
\end{lemma}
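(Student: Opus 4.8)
The plan is to reduce \Cref{lem:Dq_le_Dqd} to the elementary fact that inserting an extra breakpoint into $\bs{v}$ never increases $r_{F,G}$. Since $D_q(F,G) = 1 - \inf_{\bs{v}\in\R_\le^q} r_{F,G}(\bs{v})$ by \Cref{eq:D_q}, the claimed inequality $D_q(F,G)\le D_{q'}(F,G)$ for $q<q'$ is equivalent to $\inf_{\bs{v}\in\R_\le^{q'}} r_{F,G}(\bs{v}) \le \inf_{\bs{v}\in\R_\le^q} r_{F,G}(\bs{v})$. So it suffices to associate to each $\bs{v}=(v_1,\ldots,v_q)\in\R_\le^q$ some $\bs{v}'\in\R_\le^{q'}$ with $r_{F,G}(\bs{v}')\le r_{F,G}(\bs{v})$, and then pass to the infimum over $\bs{v}$.

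The quickest construction is to append $q'-q$ copies of the largest coordinate: set $\bs{v}'=(v_1,\ldots,v_q,v_q,\ldots,v_q)\in\R_\le^{q'}$, which is legitimate because $q\ge 1$ (so $v_q$ exists) and the sequence stays nondecreasing. With the convention $v_0=-\infty$, $v_{q+1}=\infty$, each of the $q'-q$ new summands in \Cref{eq:r} equals $\min\{F|_{v_q}^{v_q}, G|_{v_q}^{v_q}\}=\min\{0,0\}=0$, while the last interval is still $(v_q,\infty]$; hence $r_{F,G}(\bs{v}')=r_{F,G}(\bs{v})$ exactly, which already yields the desired inequality.

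Conceptually, the same conclusion also follows from the superadditivity of the minimum, $\min\{a+b,c+d\}\ge\min\{a,c\}+\min\{b,d\}$ for all $a,b,c,d\in\R$ (immediate from $\min\{a,c\}\le a$ and $\min\{b,d\}\le b$). Writing $F|_{v_i}^{v_{i+1}}=F|_{v_i}^{w}+F|_{w}^{v_{i+1}}$ for any $w\in[v_i,v_{i+1}]$, and likewise for $G$, and applying this inequality shows that splitting the $i$-th interval of $\bs{v}$ at $w$ replaces the summand $\min\{F|_{v_i}^{v_{i+1}}, G|_{v_i}^{v_{i+1}}\}$ by $\min\{F|_{v_i}^{w}, G|_{v_i}^{w}\}+\min\{F|_{w}^{v_{i+1}}, G|_{w}^{v_{i+1}}\}$, which is no larger; iterating this $q'-q$ times again turns $\bs{v}$ into a suitable $\bs{v}'$.

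I do not anticipate any genuine obstacle; the only points needing a little care are the hypothesis $q\ge 1$ (so the ``append $v_q$'' construction is well defined) and the bookkeeping of the endpoint conventions $v_0=-\infty$, $v_{q+1}=\infty$ when verifying that the new or split intervals behave as claimed.
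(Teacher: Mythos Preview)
Your proposal is correct and is essentially the same approach as the paper's: the paper's proof is the single sentence ``Since $\inf_{\bs{v}\in\R_{\le}^{q'}} r_{F,G}(\bs{v})\le \inf_{\bs{v}\in\R_{\le}^q} r_{F,G}(\bs{v})$ by definition, $D_q(F, G)\le D_{q'}(F, G)$ holds,'' and your explicit embedding $\bs{v}\mapsto (v_1,\ldots,v_q,v_q,\ldots,v_q)\in\R_\le^{q'}$ with $r_{F,G}(\bs{v}')=r_{F,G}(\bs{v})$ is exactly the justification that this one-liner leaves implicit. Your additional superadditivity remark is not needed, but it is a harmless alternative route to the same conclusion.
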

\begin{proof}
Since $\inf_{\bs{v}\in\R_{\le}^{q'}} r_{F,G}(\bs{v})\le \inf_{\bs{v}\in\R_{\le}^q} r_{F,G}(\bs{v})$ by definition, $D_q(F, G)\le D_{q'}(F, G)$ holds.
\end{proof}

\begin{lemma}\label{lem:Dq_qD1}
For each $q\in\N_+$, $D_q(F, G)\le q D_1(F, G)$ for all $F, G\in\F$.
\end{lemma}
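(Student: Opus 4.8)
The inequality $D_q(F, G) \le q\, D_1(F, G)$ rewrites, via \Cref{eq:D_q}, as $1 - \inf_{\bs v \in \R_\le^q} r_{F,G}(\bs v) \le q(1 - \inf_{v \in \R} r_{F,G}(v))$, i.e.\ $\inf_{\bs v} r_{F,G}(\bs v) \ge q \inf_v r_{F,G}(v) - (q-1)$. So it suffices to show, for every $\bs v = (v_1, \dots, v_q) \in \R_\le^q$, that $r_{F,G}(\bs v) \ge q\, m - (q-1)$, where $m \coloneq \inf_{v \in \R} r_{F,G}(v) = 1 - D_1(F,G)$. First I would record that for a \emph{single} cut point $v \in \R$, $r_{F,G}(v) = \min\{F(v), G(v)\} + \min\{1 - F(v), 1 - G(v)\} = 1 - |F(v) - G(v)|$, so $m = 1 - D_1(F,G)$ and, for every $v$, $|F(v) - G(v)| \le D_1(F,G) = 1 - m$.

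\textbf{Reducing many cuts to single cuts via \Cref{lem:min_inequality}.} The key structural fact is that $r_{F,G}(v_1, \dots, v_q)$, which telescopes $F$ and $G$ across the breakpoints $-\infty = v_0 \le v_1 \le \cdots \le v_q \le v_{q+1} = \infty$, can be bounded below in terms of the $q$ single-cut values $r_{F,G}(v_1), \dots, r_{F,G}(v_q)$. Concretely, I would show by induction on $q$ (or directly) that
\begin{equation*}
	\sum_{i=0}^q \min\bigl\{F|_{v_i}^{v_{i+1}}, G|_{v_i}^{v_{i+1}}\bigr\}
	\ge \sum_{j=1}^q \Bigl(\min\{F(v_j), G(v_j)\} + \min\{1 - F(v_j), 1 - G(v_j)\}\Bigr) - (q - 1),
\end{equation*}
that is, $r_{F,G}(\bs v) \ge \sum_{j=1}^q r_{F,G}(v_j) - (q-1)$. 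Each merging step — replacing two adjacent terms $\min\{F|_{v_{i}}^{v_{i+1}}, G|_{v_{i}}^{v_{i+1}}\} + \min\{F|_{v_{i+1}}^{v_{i+2}}, G|_{v_{i+1}}^{v_{i+2}}\}$ by $\min\{F|_{v_{i}}^{v_{i+2}}, G|_{v_{i}}^{v_{i+2}}\}$ at a cost — is exactly the content of \Cref{lem:min_inequality} with $(a, b, c)$ the cumulative increments; I would unwind how repeated application of that lemma, peeling off one breakpoint at a time and keeping track of the $b$ terms that survive, produces the stated bound. (For $q = 1$ the inequality is an equality.)

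\textbf{Conclusion.} Granting the displayed bound, since $r_{F,G}(v_j) \ge m$ for each $j$ by definition of $m$ as an infimum, we get $r_{F,G}(\bs v) \ge qm - (q-1)$ for all $\bs v \in \R_\le^q$; taking the infimum over $\bs v$ gives $\inf_{\bs v} r_{F,G}(\bs v) \ge qm - (q-1) = 1 - q D_1(F,G)$, hence $D_q(F,G) = 1 - \inf_{\bs v} r_{F,G}(\bs v) \le q D_1(F,G)$, as desired.

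\textbf{Main obstacle.} The only real work is the bookkeeping in the merging step: applying \Cref{lem:min_inequality} cleanly to the increments $F|_{v_i}^{v_{i+1}}$ rather than to the raw values $F(v_i)$, and verifying that collapsing the endpoint intervals $(-\infty, v_1]$ and $(v_q, \infty)$ contributes the correct constants so that the final additive loss is exactly $q - 1$ and not something larger. I expect an induction on $q$ to be the cleanest way to make this rigorous, with the inductive step merging the last two subintervals and invoking \Cref{lem:min_inequality}; the base case $q=1$ is the identity $r_{F,G}(v) = 1 - |F(v) - G(v)|$ already noted.
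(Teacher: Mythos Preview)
Your reduction is sound: the inequality $r_{F,G}(\bs v)\ge\sum_{j=1}^q r_{F,G}(v_j)-(q-1)$ does hold for every $\bs v\in\R_\le^q$ and immediately yields $D_q\le qD_1$. However, \Cref{lem:min_inequality} is not the tool that drives the merging step. That lemma needs a \emph{shared} middle argument $b$ in $\min\{a,b\}+\min\{b,c\}$, whereas the adjacent terms $\min\bigl\{F|_{v_i}^{v_{i+1}},G|_{v_i}^{v_{i+1}}\bigr\}$ and $\min\bigl\{F|_{v_{i+1}}^{v_{i+2}},G|_{v_{i+1}}^{v_{i+2}}\bigr\}$ have no common argument. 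The induction does go through, but what you actually need when peeling off the last breakpoint is
\[
\min\{F(v_q),G(v_q)\}\le \min\bigl\{F|_{v_{q-1}}^{v_q},G|_{v_{q-1}}^{v_q}\bigr\}+\max\{F(v_{q-1}),G(v_{q-1})\},
\]
an instance of the elementary bound $\min\{x+a,y+b\}\le\min\{x,y\}+\max\{a,b\}$, not of \Cref{lem:min_inequality}. So the bookkeeping obstacle you flag is real, but the fix is to swap in the right four-variable inequality rather than force \Cref{lem:min_inequality}.

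The paper takes a different and shorter route. It rewrites each minimum via $\min\{a,b\}=\tfrac12(a+b-|a-b|)$, telescopes the $F$- and $G$-increments to $1$, and obtains the closed form
\[
D_q(F,G)=\frac12\sup_{\bs v\in\R_\le^q}\sum_{i=0}^q\bigl\lvert F|_{v_i}^{v_{i+1}}-G|_{v_i}^{v_{i+1}}\bigr\rvert.
\]
A single application of the triangle inequality to each summand (using that $F-G$ vanishes at $\pm\infty$) then gives $\sum_{i=0}^q\tfrac12|\cdots|\le\sum_{i=1}^q|F(v_i)-G(v_i)|\le q\,D_1(F,G)$. This avoids induction entirely and yields the representation recorded in \Cref{rem:another_Dq_def} as a byproduct. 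In fact, once your target inequality is translated through the same $\min\!\to|\cdot|$ identity, it becomes exactly this triangle-inequality step; the two arguments are equivalent in content, but the paper's packaging is cleaner.
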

\begin{proof}
It follows from definition that
\begin{align*}
	D_q (F, G) &= 1 - \inf_{\bs{v}\in\R_{\le}^q} \sum_{i = 0}^q
			\min\bigl\{ F|_{v_i}^{v_{i + 1}}, G|_{v_i}^{v_{i + 1}}\bigr\} \\
			&= 1 - \inf_{\bs{v}\in\R_{\le}^q} \sum_{i = 0}^q \frac{1}{2}\bigl(
			 F|_{v_i}^{v_{i + 1}} + G|_{v_i}^{v_{i + 1}} - \bigl\lvert F|_{v_i}^{v_{i + 1}} - G|_{v_i}^{v_{i + 1}}\bigr\rvert\bigr) \\
			 &= 1 - \inf_{\bs{v}\in\R_{\le}^q} \biggl(1 - \sum_{i = 0}^q \frac{1}{2}\bigl\lvert F|_{v_i}^{v_{i + 1}} - G|_{v_i}^{v_{i + 1}}\bigr\rvert\biggr) \\
			 &=  \sup_{\bs{v}\in\R_{\le}^q} \sum_{i = 0}^q \frac{1}{2} \bigl\lvert F|_{v_i}^{v_{i + 1}} - G|_{v_i}^{v_{i + 1}}\bigr\rvert \\
			 &\le \sup_{\bs{v}\in\R_{\le}^q} \sum_{i = 1}^q \lvert F(v_i) - G(v_i)\rvert \\
			 &\le q \sup_{x\in\R}\lvert F(x) - G(x)\rvert \\
			 &= q D_1(F, G),
\end{align*}
and the proof is complete.
\end{proof}

\begin{remark}\label{rem:another_Dq_def}
As shown in the proof above, we obtain the equation
\begin{equation*}
	D_q (F, G) = \frac{1}{2}  \sup_{\bs{v}\in\R_{\le}^q} \sum_{i = 0}^q \bigl\lvert F|_{v_i}^{v_{i + 1}} - G|_{v_i}^{v_{i + 1}}\bigr\rvert \qquad (F, G\in\F).
\end{equation*}
\end{remark}

The following theorem follows immediately from \Cref{lem:Dq_le_Dqd,lem:Dq_qD1}.

\begin{theorem}\label{thm:D1_Dq_qD1}
For each $q\in\N_+$, $D_1(F, G) \le D_q(F, G)\le q D_1(F, G)$ for all $F, G\in\F$.
\end{theorem}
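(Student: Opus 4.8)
The plan is to read off the two-sided bound directly from the two lemmas stated immediately above, with essentially no extra work; the substance has already been carried out in \Cref{lem:Dq_le_Dqd,lem:Dq_qD1}. I would organize the argument as a verification of the two inequalities $D_1(F,G)\le D_q(F,G)$ and $D_q(F,G)\le qD_1(F,G)$ separately, and then concatenate them.

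For the right-hand inequality, $D_q(F,G)\le qD_1(F,G)$ is precisely the content of \Cref{lem:Dq_qD1}, so I would simply cite it; there is nothing to add. (Recall that its proof passes through the identity $D_q(F,G)=\tfrac12\sup_{\bs v\in\R_{\le}^q}\sum_{i=0}^q\lvert F|_{v_i}^{v_{i+1}}-G|_{v_i}^{v_{i+1}}\rvert$ from \Cref{rem:another_Dq_def}, bounds each summand by $\lvert F(v_i)-G(v_i)\rvert+\lvert F(v_{i+1})-G(v_{i+1})\rvert$, and uses that the boundary terms at $v_0=-\infty$ and $v_{q+1}=\infty$ vanish.)

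For the left-hand inequality, I would observe that it is an equality when $q=1$, and that for $q\ge 2$ it follows from \Cref{lem:Dq_le_Dqd} applied to the index pair $(1,q)$ since $1<q$, giving $D_1(F,G)\le D_q(F,G)$. Chaining the two inequalities yields $D_1(F,G)\le D_q(F,G)\le qD_1(F,G)$ for all $F,G\in\F$, as claimed.

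I do not anticipate any obstacle here: the statement is purely a packaging of \Cref{lem:Dq_le_Dqd,lem:Dq_qD1}. The single conceptual point, already isolated inside \Cref{lem:Dq_le_Dqd}, is that increasing $q$ can only enlarge the feasible set over which the infimum in \Cref{eq:D_q} is taken: any $\bs v\in\R_{\le}^q$ can be padded to a point of $\R_{\le}^{q+1}$ by repeating a coordinate, which introduces only a degenerate subinterval on which both increments of $F$ and $G$ are $0$, hence leaves $r_{F,G}$ unchanged. Consequently $q\mapsto\inf_{\bs v\in\R_{\le}^q}r_{F,G}(\bs v)$ is nonincreasing and $q\mapsto D_q(F,G)$ is nondecreasing, which is all the lower bound requires.
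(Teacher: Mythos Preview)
Your proposal is correct and matches the paper's own proof, which simply states that the theorem follows immediately from \Cref{lem:Dq_le_Dqd,lem:Dq_qD1}. Your additional commentary on the padding argument behind \Cref{lem:Dq_le_Dqd} is accurate but unnecessary, since that lemma has already been established.
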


Now \Cref{thm:(FD_q)_complete_metric} follows from \Cref{thm:F_D1_complete,thm:(FD_q)_metric,thm:D1_Dq_qD1}.

\subsection{Proof of \texorpdfstring{\Cref{thm:D_q_complete_convergence}}{Theorem \ref{thm:D_q_complete_convergence}}}\label{sec:proof.thm:D_q_complete_convergence}

\begin{theorem}\label{thm:Fn_complete_convergence}
{\normalfont (The Glivenko-Cantelli theorem. See the proof of \cite[Theorem A, Section 2.1.4]{serfling80}.)}
$\sup_{x\in\R}\lvert F_n(x)-F(x)\rvert$ converges completely to 0 as $n\to\infty$, i.e., 
\begin{equation*}
	\sum_{n=1}^\infty P\biggl(\sup_{x\in\R}\lvert F_n(x)-F(x)\rvert> \epsilon\biggr) < \infty
\end{equation*}
for any $\epsilon>0$.
\end{theorem}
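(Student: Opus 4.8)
The plan is to establish the complete-convergence form of the Glivenko--Cantelli theorem by combining a uniform exponential tail bound with the summability of a geometric series; this is essentially the argument behind \cite[Theorem A, Section 2.1.4]{serfling80}, organised so as to make the summability over $n$ explicit.

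Fix $\epsilon>0$. The first step is to replace the supremum over $\R$ by a maximum over finitely many fixed points. Put $m\coloneq\lceil 2/\epsilon\rceil$ and, for $j=1,\dots,m-1$, set $x_j\coloneq\inf\{x\in\R:F(x)\ge j/m\}$, together with $x_0\coloneq-\infty$ and $x_m\coloneq\infty$. From the monotonicity and right-continuity of $F$ one gets $F(x_j-)-F(x_{j-1})\le 1/m\le\epsilon/2$ for each $j$, and comparing $F_n(x)-F(x)$ with the values and left limits of $F_n$ and $F$ at the endpoints of the interval $[x_{j-1},x_j)$ containing $x$ yields the deterministic bound
\begin{equation*}
	\sup_{x\in\R}\lvert F_n(x)-F(x)\rvert\le\max_{1\le j\le m-1}\bigl(\lvert F_n(x_j)-F(x_j)\rvert\vee\lvert F_n(x_j-)-F(x_j-)\rvert\bigr)+\frac{\epsilon}{2}.
\end{equation*}
(The left-hand side is measurable, since writing $F(x-)=\sup_{q\in\Q,\,q<x}F(q)$ exhibits the supremum as a countable one.) Consequently the event $\{\sup_{x\in\R}\lvert F_n(x)-F(x)\rvert>\epsilon\}$ is contained in the union, over the at most $2(m-1)$ relevant points $y$ (with left limits used at the points $x_j-$), of the events $\{\lvert F_n(y)-F(y)\rvert>\epsilon/2\}$.

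For the second step, fix such a $y$. Then $nF_n(y)$ has the $\mathrm{Binomial}(n,F(y))$ distribution, and similarly $nF_n(y-)$ has the $\mathrm{Binomial}(n,F(y-))$ distribution, so Hoeffding's inequality gives $P(\lvert F_n(y)-F(y)\rvert>\epsilon/2)\le 2\exp(-n\epsilon^2/2)$, and likewise at the left limits. A union bound over the finitely many points then yields, with $C(\epsilon)\coloneq 4(m-1)$,
\begin{equation*}
	P\Bigl(\sup_{x\in\R}\lvert F_n(x)-F(x)\rvert>\epsilon\Bigr)\le C(\epsilon)\exp\Bigl(-\frac{n\epsilon^2}{2}\Bigr),
\end{equation*}
and summing the geometric series $\sum_{n\ge 1}C(\epsilon)\exp(-n\epsilon^2/2)<\infty$ completes the argument. (If one is content to cite the Dvoretzky--Kiefer--Wolfowitz inequality, the whole first step can be bypassed, as it gives $P(\sup_{x\in\R}\lvert F_n(x)-F(x)\rvert>\epsilon)\le 2\exp(-2n\epsilon^2)$ directly.)

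The only delicate point is the deterministic bound of the first step: because $F$ (and $F_n$) may have jumps, one must carry the left limits $F(x_j-)$, $F_n(x_j-)$ alongside the values at the grid points $x_j$, and choose the $x_j$ via the infimum/quantile construction above so that the residual $\epsilon/2$ is genuinely uniform in $x$. Once this is set up, the remainder --- Hoeffding's inequality, a union bound over $O(1/\epsilon)$ points, and the convergence of a geometric series in $n$ --- is routine.
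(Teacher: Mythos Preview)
Your proof is correct. Note, however, that the paper does not actually supply a proof of this theorem: it is stated with a citation to \cite[Theorem A, Section 2.1.4]{serfling80} and then used immediately as a black box in the proof of \Cref{thm:D_q_complete_convergence}. So there is no ``paper's own proof'' to compare against; you have filled in an argument the authors chose to outsource.

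That said, your argument is precisely the standard one underlying the cited reference: discretise via quantile points so that the oscillation of $F$ on each cell is at most $\epsilon/2$, carry left limits to accommodate jumps of $F$ and $F_n$, apply Hoeffding's inequality at each of the $O(1/\epsilon)$ grid points, and sum the resulting geometric tail. Your remark that the Dvoretzky--Kiefer--Wolfowitz inequality would bypass the grid construction entirely is also apt; either route yields the exponential bound $P(\sup_x|F_n(x)-F(x)|>\epsilon)\le C(\epsilon)e^{-cn\epsilon^2}$, which is all the paper needs downstream (in fact only summability is used, via \Cref{lem:Dq_qD1}).

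One small cosmetic point: in your deterministic bound the parenthetical about measurability is slightly misplaced---the countable-supremum argument shows measurability of $\sup_x|F_n(x)-F(x)|$ itself, not of the right-hand side, but the intent is clear and the claim is correct.
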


It follows from \Cref{lem:Dq_qD1,thm:Fn_complete_convergence} that
\begin{align*}
	\sum_{n=1}^\infty P(D_q(F_n, F) > \epsilon)
	&\le \sum_{n=1}^\infty P\biggl(q \sup_{x\in\R}\lvert F_n(x) - F(x)\rvert  > \epsilon\biggr)\\
	&= \sum_{n=1}^\infty P\biggl(\sup_{x\in\R}\lvert F_n(x) - F(x)\rvert  > \epsilon/q\biggr)\\
	&<\infty
\end{align*}
for any $\epsilon>0$.
This proves \Cref{thm:D_q_complete_convergence}.

\subsection{Proof of \texorpdfstring{\Cref{thm:D_q_distribution}}{Theorem \ref{thm:D_q_distribution}}}\label{sec:proof.D_q_distribution}
Let us denote by $F^-$ the quantile function of $F\in\F$, i.e., 
\begin{equation}\label{eq:quantile_function}
	F^- (y) \coloneq \inf \{ x\in\R : F(x)\ge y \} \qquad (y\in [0, 1])
\end{equation}
with the convention that $\inf \emptyset \coloneq \infty$ and $\inf \R \coloneq -\infty$.
Let $U$ be the standard uniform distribution function defined in \cref{eq:uniform_distribution}.

\begin{theorem}\label{thm:quantile_function_properties}
{\normalfont (See \cite[Proposition 1.1]{Dudley_2014} or \cite[Propositions 1 and 2]{embrechts13} for reference.)}
\begin{enumerate}[label=(\alph*)]
\item For any $y\in (0, 1)$, $F^-(y)$ is a finite real number.
\item For any $y\in (0, 1)$, $F(F^-(y))\ge y$.
\item For any $x\in\R$ and $y\in (0, 1)$, $F(x)\ge y$ if and only if $x\ge F^-(y)$.
\item If the distribution function of $Z$ is $U$, then the distribution function of $F^-(Z)$ is $F$.
\end{enumerate}
\end{theorem}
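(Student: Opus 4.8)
The plan is to reduce all four assertions to a single structural fact about the super-level set $S_y \coloneq \{x \in \R : F(x) \ge y\}$ for $y \in (0,1)$. First I would observe that $S_y$ is an up-set: by monotonicity of $F$, if $x \in S_y$ and $x' \ge x$ then $x' \in S_y$. Second, $F(\infty) = 1 > y$ forces $S_y \neq \emptyset$ while $F(-\infty) = 0 < y$ forces $S_y$ to be bounded below, so $a \coloneq \inf S_y = F^-(y)$ is a finite real number; this is part~(a). Third, right-continuity of $F$ shows $a \in S_y$: choosing $x_n \in S_y$ with $x_n \downarrow a$ gives $F(a) = \lim_n F(x_n) \ge y$, which is part~(b). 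Combining these, $S_y = [F^-(y), \infty)$, and part~(c) is then immediate, since $F(x) \ge y$ holds exactly when $x \in S_y$, that is, when $x \ge F^-(y)$.

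For part~(d), let $Z$ have distribution function $U$, so that $P(Z \le z) = U(z)$ for all $z \in \R$ and $Z \in (0,1)$ almost surely. Fix $x \in \R$. For every $\omega$ with $Z(\omega) \in (0,1)$, part~(c) applied with $y = Z(\omega)$ gives $F^-(Z(\omega)) \le x \iff Z(\omega) \le F(x)$; discarding the null set $\{Z \notin (0,1)\}$ we obtain $P(F^-(Z) \le x) = P(Z \le F(x)) = U(F(x)) = F(x)$, the last equality because $0 \le F(x) \le 1$. Since $x$ was arbitrary, $F^-(Z)$ has distribution function $F$.

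I do not expect a genuine obstacle; the only delicate point is keeping the conventions straight. Restricting to $y \in (0,1)$ is essential, since at the endpoints one may have $S_0 = \R$ (so $F^-(0) = -\infty$) or $S_1 = \emptyset$ (so $F^-(1) = \infty$), and parts~(a)--(c) would fail as stated. Moreover, because $F$ is assumed neither continuous nor strictly increasing, $S_y$ could a priori be a half-line open at its left endpoint; it is precisely the combination of the up-set property with right-continuity that excludes this and pins down $S_y = [F^-(y),\infty)$. After that identification everything is bookkeeping, and one could equally well invoke \cite[Proposition 1.1]{Dudley_2014} or \cite[Propositions 1 and 2]{embrechts13} directly; I would nonetheless record the short self-contained argument.
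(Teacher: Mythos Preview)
Your argument is correct. The reduction to the structural identity $S_y = [F^-(y),\infty)$ is the right organizing principle: nonemptiness and boundedness below of $S_y$ give (a), right-continuity forces $\inf S_y \in S_y$ and hence (b), the up-set property then yields $S_y = [F^-(y),\infty)$ and (c), and (d) follows by applying (c) pointwise on the almost-sure event $\{Z \in (0,1)\}$. The only cosmetic point is that ``$F(\infty) = 1 > y$ forces $S_y \neq \emptyset$'' should be read as: since $\lim_{x\to\infty} F(x) = 1 > y$, some real $x$ satisfies $F(x) \ge y$; likewise for the lower bound. This is clearly what you intend.

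As for comparison with the paper: there is nothing to compare. The paper does not prove \Cref{thm:quantile_function_properties} at all; it simply states the result and points to \cite[Proposition 1.1]{Dudley_2014} and \cite[Propositions 1 and 2]{embrechts13}. Your proposal therefore supplies strictly more than the paper does, namely a short self-contained proof in place of an external citation. The argument you give is the standard one found in those references, so in that sense your route is the same as the cited literature's, just written out explicitly.
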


Let $W_1, \ldots, W_n$ be i.i.d.\ random variables on a probability space $(\Omega', \A', P')$ with $U$, $X_i' \coloneq F^-(W_i)$ for $i=1,\ldots,n$, and 
\begin{align*}
	U_n(x) &\coloneq \frac{1}{n}\sum_{i=1}^n\1_{(-\infty,x]}(W_i)\qquad (x\in\R),\\
	F_n'(x) &\coloneq \frac{1}{n}\sum_{i=1}^n\1_{(-\infty,x]}(X_i')\qquad (x\in\R).
\end{align*}
As described in \Cref{sec:1sample_OVL}, $X_1, \ldots, X_n$ are i.i.d.\ random variables on $(\Omega, \A, P)$ with $F$ and 
\begin{equation*}
	F_n(x) \coloneq \frac{1}{n}\sum_{i=1}^n\1_{(-\infty,x]}(X_i)\qquad (x\in\R).
\end{equation*}
The following colloraries are immediate consequences of  \Cref{thm:quantile_function_properties}.

\begin{corollary}\label{cor:P(Xd)=P(X)}
The random variables $X_1',\ldots,X_n'$ are i.i.d. with the same distribution function $F$.
The probability measure on $\B(\R^n)$ induced by $(X_1',\ldots,X_n')$ and that by $(X_1,\ldots,X_n)$ are the same, i.e., 
\begin{equation*}
	P'((X_1',\ldots,X_n')\in A) = P((X_1,\ldots,X_n)\in A) \qquad (A\in\B(\R^n)).
\end{equation*}
\end{corollary}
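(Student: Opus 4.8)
The plan is to prove both assertions of the corollary by identifying each of the two joint laws on $\B(\R^n)$ with the $n$-fold product of the Borel probability measure on $\R$ determined by $F$. Write $\mu_F$ for that measure, so that $\mu_F((-\infty, x]) = F(x)$ for all $x \in \R$.

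First I would check that each $X_i' = F^-(W_i)$ is a genuine real-valued random variable with law $\mu_F$. Since $F^-$ is nondecreasing on $[0,1]$ it is Borel measurable, so $F^-(W_i)$ is measurable. Because $W_i$ has the continuous distribution function $U$, we have $W_i \in (0,1)$ $P'$-almost surely, and then \Cref{thm:quantile_function_properties}(a) gives that $F^-(W_i)$ is almost surely finite; fixing its value arbitrarily on the exceptional null set makes $X_i'$ a random variable on $(\Omega', \A', P')$. By \Cref{thm:quantile_function_properties}(d), the distribution function of $X_i'$ is $F$, i.e.\ its law is $\mu_F$. For independence, each $X_i'$ is a measurable function of $W_i$ alone, and $W_1, \ldots, W_n$ are independent, so $X_1', \ldots, X_n'$ are independent. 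This establishes the first claim: $X_1', \ldots, X_n'$ are i.i.d.\ with distribution function $F$.

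For the equality of joint laws, I would argue that the law of $(X_1', \ldots, X_n')$ on $\B(\R^n)$ is the product measure $\mu_F^{\otimes n}$, by independence with common marginal $\mu_F$; and likewise, since $X_1, \ldots, X_n$ are i.i.d.\ with distribution function $F$ as set up in \Cref{sec:1sample_OVL}, the law of $(X_1, \ldots, X_n)$ is also $\mu_F^{\otimes n}$. Two probability measures on $\B(\R^n)$ that agree on all measurable rectangles $A_1 \times \cdots \times A_n$ (both assign $\prod_{i=1}^n \mu_F(A_i)$) agree on the $\sigma$-algebra generated by those rectangles, which is $\B(\R^n)$, by the $\pi$--$\lambda$ theorem (uniqueness of extension). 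Hence $P'((X_1', \ldots, X_n') \in A) = P((X_1, \ldots, X_n) \in A)$ for every $A \in \B(\R^n)$.

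The whole argument is essentially bookkeeping on top of \Cref{thm:quantile_function_properties}; the only two points that deserve a line of care are ensuring $F^-(W_i)$ is well defined as a real random variable (handled by almost-sure finiteness) and invoking uniqueness of the product measure rather than merely matching one-dimensional marginals, so I would expect the $\pi$--$\lambda$/product-measure step to be the part worth stating explicitly.
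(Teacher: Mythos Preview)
Your proof is correct and follows the same route the paper intends: the paper simply records this corollary as an ``immediate consequence'' of \Cref{thm:quantile_function_properties} without further argument, and you have spelled out precisely those details (measurability and a.s.\ finiteness of $F^-(W_i)$, part (d) for the marginal law, independence via functions of independent variables, and uniqueness of the product measure). There is nothing to add.
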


\begin{corollary}\label{cor:Fn=Un_F}
It holds almost surely that $F_n' = U_n\circ F$.
\end{corollary}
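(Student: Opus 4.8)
The plan is to verify the pointwise identity $F_n'(x) = U_n(F(x))$ for every $x \in \R$, off a single $P'$-null set that does not depend on $x$. Unwinding the definitions, $F_n'(x) = \frac1n\sum_{i=1}^n \1_{(-\infty, x]}(F^-(W_i))$ and $U_n(F(x)) = \frac1n\sum_{i=1}^n \1_{(-\infty, F(x)]}(W_i)$, so it suffices to prove the equality of indicators $\1_{(-\infty,x]}(F^-(W_i)) = \1_{(-\infty, F(x)]}(W_i)$ for all $x \in \R$ and all $i = 1,\dots,n$ simultaneously, on an event of probability one.

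First I would introduce the event $\Omega_0 \coloneq \bigcap_{i=1}^n \{W_i \in (0,1)\}$; since each $W_i$ has distribution function $U$, we have $P'(W_i \in (0,1)) = 1$, hence $P'(\Omega_0) = 1$. On $\Omega_0$, for each $i$ and each $x \in \R$, I would invoke \Cref{thm:quantile_function_properties}(c) with $y = W_i$ to obtain the equivalence $F(x) \ge W_i \iff x \ge F^-(W_i)$, which is exactly the asserted equality $\1_{(-\infty, F(x)]}(W_i) = \1_{(-\infty, x]}(F^-(W_i))$. Summing over $i = 1, \dots, n$ and dividing by $n$ then yields $U_n(F(x)) = F_n'(x)$ on $\Omega_0$ for every $x \in \R$, i.e.\ $F_n' = U_n \circ F$ on $\Omega_0$, which is the claim since $P'(\Omega_0) = 1$.

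I do not anticipate any real obstacle. The one point requiring care is that \Cref{thm:quantile_function_properties}(c) is stated only for the argument $y \in (0,1)$, so the equivalence must be applied on the full-measure event $\Omega_0$ where no $W_i$ equals $0$ or $1$; on the exceptional null set one simply makes no claim, which is consistent with the almost-sure formulation of the corollary.
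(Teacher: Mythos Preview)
Your proposal is correct and is exactly the argument the paper has in mind: the paper simply declares the corollary an immediate consequence of \Cref{thm:quantile_function_properties}, and you have spelled out that immediate consequence, using part (c) on the full-measure event where each $W_i\in(0,1)$ to match the indicators term by term. There is nothing to add.
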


Note that $F = U\circ F$ holds obviously.

\begin{theorem}\label{thm:Dq'=Dq}
For each $q\in\N_+$, $D_q (F_n', F) \le D_q (U_n, U)$ almost surely.
If $F$ is continuous on $\R$, $D_q (F_n', F) = D_q (U_n, U)$ almost surely.
\end{theorem}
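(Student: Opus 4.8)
The plan is to reduce, via \Cref{cor:Fn=Un_F} and the fact that $W_1, \ldots, W_n$ lie in the open interval $(0,1)$ almost surely, to a deterministic comparison between $D_q(U_n \circ F, U \circ F)$ and $D_q(U_n, U)$. I work on the almost sure event $E$ on which $F_n' = U_n \circ F$ and every $W_i \in (0,1)$. On $E$ the composition $U_n \circ F$ is again a distribution function in $\F$ (it is nondecreasing and right-continuous, and since $0 < W_i < 1$ for all $i$ it has limit $0$ at $-\infty$ and limit $1$ at $\infty$), and the same is trivially true of $U \circ F = F$. Consequently, for every $\bs v = (v_1, \ldots, v_q) \in \R_\le^q$, putting $\bs w := (F(v_1), \ldots, F(v_q))$ — which again lies in $\R_\le^q$ because $F$ is nondecreasing — the two uses of the convention that index $0$ means $-\infty$ and index $q+1$ means $\infty$ produce matching endpoint values ($0$ at the lower end, $1$ at the upper), so on $E$
\begin{equation*}
	r_{U_n \circ F,\, U \circ F}(\bs v) = r_{U_n, U}(\bs w),
\end{equation*}
while $D_q(F_n', F) = D_q(U_n \circ F, U \circ F)$ on $E$.

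For the inequality, take the infimum over $\bs v \in \R_\le^q$ in the displayed identity: since $\{(F(v_1), \ldots, F(v_q)) : \bs v \in \R_\le^q\} \subseteq \R_\le^q$, its infimum is at least $\inf_{\bs w \in \R_\le^q} r_{U_n, U}(\bs w)$, whence $D_q(U_n \circ F, U \circ F) \le D_q(U_n, U)$ on $E$, and therefore $D_q(F_n', F) \le D_q(U_n, U)$ almost surely.

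For the equality when $F$ is continuous, it remains to establish the reverse inequality on $E$, i.e.\ $\inf_{\bs v \in \R_\le^q} r_{U_n, U}(F(v_1), \ldots, F(v_q)) \le \inf_{\bs w \in \R_\le^q} r_{U_n, U}(\bs w)$, which I would do in two steps. First, a reduction: for every $\bs w \in \R_\le^q$ there is $\bs w' \in \R_\le^q$ with all coordinates in $(0,1)$ and $r_{U_n, U}(\bs w') \le r_{U_n, U}(\bs w)$ — indeed, as $\bs w$ is nondecreasing, its coordinates equal to $0$ form an initial block and those equal to $1$ a terminal block, and replacing such a block by a common value slightly inside $(0,1)$ leaves the other summands of $r_{U_n, U}$ unchanged and only splits the block's single contributing $\min$-term into two $\min$-terms of no larger sum, by the elementary inequality $\min\{a,b\} + \min\{c,d\} \le \min\{a+c, b+d\}$. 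Second, realization through $F$: if every coordinate of $\bs w'$ lies in $(0,1)$, then by \Cref{thm:quantile_function_properties}(a) each $F^-(w_i')$ is finite, and continuity of $F$ forces $F(F^-(w_i')) = w_i'$ (intermediate value theorem); since $F^-$ is nondecreasing, $\bs v := (F^-(w_1'), \ldots, F^-(w_q')) \in \R_\le^q$ realizes $(F(v_1), \ldots, F(v_q)) = \bs w'$. Combining the two steps gives $\inf_{\bs v} r_{U_n, U}(F(v_1), \ldots, F(v_q)) \le r_{U_n, U}(\bs w') \le r_{U_n, U}(\bs w)$ for all $\bs w$, the desired reverse inequality; hence $D_q(U_n \circ F, U \circ F) = D_q(U_n, U)$ on $E$ and $D_q(F_n', F) = D_q(U_n, U)$ almost surely.

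The main obstacle is the first step of the equality part, the reduction to coordinates in the open interval $(0,1)$: it is genuinely needed because a continuous $F$ need not attain the values $0$ or $1$, so those coordinates of $\bs w$ cannot be written as $F(v_i)$ with $v_i$ finite; the care required there is in treating the whole initial and terminal blocks at once, covering the degenerate cases (for instance $\bs w = (0, \ldots, 0)$ or $\bs w = (0, \ldots, 0, 1, \ldots, 1)$), and keeping the boundary conventions at $\pm\infty$ straight throughout. Everything else is immediate from \Cref{cor:Fn=Un_F}, the monotonicity of $F$, and \Cref{thm:quantile_function_properties}.
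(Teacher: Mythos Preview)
Your proof is correct and follows the same approach as the paper's: reduce via \Cref{cor:Fn=Un_F} to the identity $r_{F_n',F}(\bs v)=r_{U_n,U}(F(\bs v))$ on the almost-sure event, then compare infima, invoking $F(\R)\supset(0,1)$ in the continuous case. Your treatment is in fact more careful than the paper's, which simply asserts equality of the infima once $F(\R)\supset(0,1)$, whereas you spell out the reduction to coordinates in $(0,1)$ and their realization through $F^-$; one cosmetic slip---``coordinates equal to $0$'' should read ``$\le 0$'' (and likewise at $1$)---does not affect the argument.
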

\begin{proof}
For $\bs{v} = (v_1,\ldots, v_q)\in\R_{\le}^q$, it follows from \Cref{eq:r,cor:Fn=Un_F} that
\begin{align*}
	r_{F_n',F}(\bs{v}) 
	&= \sum_{i = 0}^q
			\min\bigl\{ F_n'|_{v_i}^{v_{i + 1}}, F|_{v_i}^{v_{i + 1}}\bigr\} \\
	&= \sum_{i = 0}^q
			\min\bigl\{ U_n\circ F|_{v_i}^{v_{i + 1}}, U\circ F|_{v_i}^{v_{i + 1}}\bigr\} \\
	&= \sum_{i = 0}^q
			\min\Bigl\{ U_n|_{F(v_i)}^{F(v_{i + 1})}, U|_{F(v_i)}^{F(v_{i + 1})}\Bigr\},
\end{align*}
where $F(v_0)=F(-\infty)=0$ and $F(v_{q+1})=F(\infty)=1$.
Since $U_n(0) = 0 = U_n(-\infty)$ and $U_n(1) = 1 = U_n(\infty)$ with probability 1, we have
\begin{equation*}
	r_{F_n', F}(\bs{v}) = r_{U_n, U}(F(\bs{v})),
	 \qquad F(\bs{v})\coloneq (F(v_1),\ldots,F(v_n))\in\R_{\le}^q
\end{equation*}
almost surely.
Hence
\begin{equation*}
	 \inf_{\bs{v}\in\R_{\le}^q} r_{F_n',F}(\bs{v})
	 = \inf_{\bs{v}\in\R_{\le}^q} r_{U_n, U}(F(\bs{v}))
	 \ge  \inf_{\bs{v}\in\R_{\le}^q} r_{U_n, U}(\bs{v})
\end{equation*}
and $D_q(F_n', F) \le D_q(U_n, U)$ almost surely.
If $F$ is continuous on $\R$, we have $F(\R)\supset (0, 1)$, so that 
\begin{equation*}
	\inf_{\bs{v}\in\R_{\le}^q} r_{U_n, U}(F(\bs{v}))
	=\inf_{\bs{v}\in\R_{\le}^q} r_{U_n, U}(\bs{v})
\end{equation*}
and $D_q(F_n', F) = D_q(U_n, U)$ almost surely.
\end{proof}

Let us define, for each $(t_1,\ldots,t_n)\in\R^n$, 
\begin{equation*}
	\Phi_{(t_1,\ldots,t_n)}(x) \coloneq \frac{1}{n}\sum_{i=1}^n\1_{(-\infty,x]}(t_i)\qquad (x\in\R).
\end{equation*}
We also define, for each $q\in\N_+$ and $\xi\in\F$, 
\begin{equation*}
	\wt{D}_{q,\xi}(\bs{t}) \coloneq D_q(\Phi_{\bs{t}}, \xi) \qquad (\bs{t}\in\R^n).
\end{equation*}

\begin{remark}\label{rem:Dq_composite}
We see that 
$D_q(U_n,U) = \wt{D}_{q,U}\circ (W_1,\ldots,W_n)$ and
$D_q(F_n',F) = \wt{D}_{q,F}\circ (X_1',\ldots,X_n')$ on $(\Omega',\A',P')$,
and $D_q(F_n,F) = \wt{D}_{q,F}\circ (X_1,\ldots,X_n)$ on $(\Omega,\A,P)$.
\end{remark}

\begin{theorem}\label{thm:Dq_xi_measurable}
For each $q\in\N_+$ and $\xi\in\F$, $\wt{D}_{q,\xi}$ is a Borel measurable function on $\R^n$.
\end{theorem}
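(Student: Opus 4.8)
The plan is to express $\wt D_{q,\xi}$ as a countable infimum of Borel measurable functions. Recall from \Cref{eq:D_q,eq:r} that, for $\bs t\in\R^n$,
\[
  \wt D_{q,\xi}(\bs t)=D_q(\Phi_{\bs t},\xi)
  =1-\inf_{\bs v\in\R_\le^q} r_{\Phi_{\bs t},\xi}(\bs v),
  \qquad
  r_{\Phi_{\bs t},\xi}(\bs v)=\sum_{i=0}^q\min\bigl\{\Phi_{\bs t}|_{v_i}^{v_{i+1}},\,\xi|_{v_i}^{v_{i+1}}\bigr\},
\]
with $v_0=-\infty$ and $v_{q+1}=\infty$. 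First I would reduce the infimum over $\R_\le^q$ to the infimum over the countable set $\Q_\le^q\coloneq\R_\le^q\cap\Q^q$, and then verify that for each fixed $\bs v\in\Q_\le^q$ the map $\bs t\mapsto r_{\Phi_{\bs t},\xi}(\bs v)$ is Borel measurable on $\R^n$.

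For the reduction, fix $\bs t$ and $\bs v=(v_1,\dots,v_q)\in\R_\le^q$. For each sufficiently small $\delta>0$ one can pick $\bs v'=(v_1',\dots,v_q')\in\Q_\le^q$ with $v_i\le v_i'<v_i+\delta$ for all $i$: enumerate the distinct coordinate values $w_1<\dots<w_k$ of $\bs v$, take $\delta$ below every gap $w_{j+1}-w_j$, choose a rational in each interval $(w_j,w_j+\delta)$, and let $v_i'$ be the rational chosen for the value $w_j=v_i$; equal coordinates then receive equal rationals and the disjoint ordered intervals force $\bs v'\in\Q_\le^q$. Since $\Phi_{\bs t}$ and $\xi$ are right-continuous on $\R$ with values $0$ at $-\infty$ and $1$ at $\infty$, and the endpoints $v_0=-\infty$, $v_{q+1}=\infty$ are left unperturbed, letting $\delta\downarrow 0$ gives $\Phi_{\bs t}|_{v_i'}^{v_{i+1}'}\to\Phi_{\bs t}|_{v_i}^{v_{i+1}}$ and $\xi|_{v_i'}^{v_{i+1}'}\to\xi|_{v_i}^{v_{i+1}}$ for each $i$, whence $r_{\Phi_{\bs t},\xi}(\bs v')\to r_{\Phi_{\bs t},\xi}(\bs v)$ by continuity of $\min$. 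Thus $\inf_{\bs v\in\Q_\le^q}r_{\Phi_{\bs t},\xi}(\bs v)\le r_{\Phi_{\bs t},\xi}(\bs v)$ for every $\bs v\in\R_\le^q$, and since the reverse inequality between the two infima is immediate we obtain $\wt D_{q,\xi}(\bs t)=1-\inf_{\bs v\in\Q_\le^q}r_{\Phi_{\bs t},\xi}(\bs v)$.

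For the measurability of each $\bs t\mapsto r_{\Phi_{\bs t},\xi}(\bs v)$, note that every $\xi|_{v_i}^{v_{i+1}}$ is a constant, while for any $w\in[-\infty,\infty]$ the map $\bs t\mapsto\Phi_{\bs t}(w)=\frac{1}{n}\sum_{j=1}^n\1_{(-\infty,w]}(t_j)$ is Borel measurable on $\R^n$: it is constant when $w=\pm\infty$, and for finite $w$ it is a normalized sum of indicators of the Borel half-spaces $\{\bs t\in\R^n:t_j\le w\}$. Hence each $\Phi_{\bs t}|_{v_i}^{v_{i+1}}=\Phi_{\bs t}(v_{i+1})-\Phi_{\bs t}(v_i)$ is Borel in $\bs t$, and so are the terms $\min\{\Phi_{\bs t}|_{v_i}^{v_{i+1}},\xi|_{v_i}^{v_{i+1}}\}$ and their finite sum $r_{\Phi_{\bs t},\xi}(\bs v)$. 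A countable infimum of Borel measurable functions is Borel measurable, so $\wt D_{q,\xi}(\bs t)=1-\inf_{\bs v\in\Q_\le^q}r_{\Phi_{\bs t},\xi}(\bs v)$ is Borel measurable on $\R^n$.

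The only delicate point is the passage to the countable infimum: I must make sure the rational approximants $\bs v'$ stay inside $\R_\le^q$ (respecting both the ordering and the ties among the $v_i$) and that nudging the finite coordinates slightly to the right, while keeping the $\pm\infty$ endpoints fixed, leaves $r_{\Phi_{\bs t},\xi}$ asymptotically unchanged — exactly where right-continuity of $\Phi_{\bs t}$ and $\xi$, together with their boundary values $0$ and $1$, is used.
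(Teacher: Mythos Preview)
Your proof is correct and follows essentially the same strategy as the paper's: reduce the infimum over $\R_\le^q$ to the countable set $\Q_\le^q$ via right-continuity of $\Phi_{\bs t}$ and $\xi$, then observe that for each fixed $\bs v$ the map $\bs t\mapsto r_{\Phi_{\bs t},\xi}(\bs v)$ is Borel measurable, so that $\wt D_{q,\xi}$ is a countable infimum of Borel functions. You give more explicit care than the paper does to constructing the rational approximants so that ties and the ordering in $\R_\le^q$ are preserved, but the argument is otherwise identical.
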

\begin{proof}
Let us put $\Q_\le^q \coloneq \R_\le^q\cap\Q^q$.
It is obvious that
\begin{equation}\label{eq:r_R_le_Q}
	\inf_{\bs{v}\in\R_\le^q}r_{\Phi_{\bs{t}},\xi}(\bs{v}) \le \inf_{\bs{v}\in\Q_\le^q}r_{\Phi_{\bs{t}},\xi}(\bs{v}) \qquad (\bs{t}\in\R^n).
\end{equation}
For any $\epsilon>0$, there exists $\bs{x}\coloneq (x_1,\ldots,x_q)\in\R_\le^q$ such that $r_{\Phi_{\bs{t}},\xi}(\bs{x})<\inf_{\bs{v}\in\R_\le^q}r_{\Phi_{\bs{t}},\xi}(\bs{v}) + \epsilon$.
Let $\{\bs{a}_i\coloneq (a_{i,1},\ldots,a_{i,q})\}$ be a sequence in $\Q_\le^q$ such that for each $j\in\{1,\ldots,q\}$, $a_{i,j}$ converges to $x_j$ from the right as $i\to\infty$.
Since $\Phi_{\bs{t}}$ and $\xi$ are right-continuous on $\R$, $\lim_{i\to\infty}r_{\Phi_{\bs{t}},\xi}(\bs{a}_i)=r_{\Phi_{\bs{t}},\xi}(\bs{x})$.
Hence $r_{\Phi_{\bs{t}},\xi}(\bs{b})<\inf_{\bs{v}\in\R_\le^q}r_{\Phi_{\bs{t}},\xi}(\bs{v}) + \epsilon$ for some $\bs{b}\in\Q_\le^q$.
With \Cref{eq:r_R_le_Q}, we have
\begin{equation}\label{eq:r_R_=_Q}
	\inf_{\bs{v}\in\R_\le^q}r_{\Phi_{\bs{t}},\xi}(\bs{v}) = \inf_{\bs{v}\in\Q_\le^q}r_{\Phi_{\bs{t}},\xi}(\bs{v}) \qquad (\bs{t}\in\R^n),
\end{equation}
so that
\begin{equation}\label{eq:wtD_countable}
	\wt{D}_{q,\xi}(\bs{t}) = 1-\inf_{\bs{v}\in\Q_\le^q}r_{\Phi_{\bs{t}},\xi}(\bs{v}) \qquad (\bs{t}\in\R^n).
\end{equation}
by definition.

It is immediate from definition that $\Phi_\bullet(x)\colon\R^n\to\R$ and $r_{\Phi_\bullet,\xi}(\bs{v})\colon\R^n\to\R$ are Borel measurable for each $x\in\R$ and $\bs{v}\in\R_\le^q$, respectively.
Hence $\wt{D}_{q,\xi}\colon\R^n\to\R$ can be described by the countable infimum of Borel measurable functions by \Cref{eq:wtD_countable}.
This implies the claim.
\end{proof}

The next corollary follows from \Cref{rem:Dq_composite,thm:Dq_xi_measurable}.

\begin{corollary}\label{cor:Dq_random_variable}
$D_q(U_n,U)$ and $D_q(F_n',F)$ are random variables on $(\Omega',\A',P')$, and $D_q(F_n,F)$ is a random variable on $(\Omega,\A,P)$.
\end{corollary}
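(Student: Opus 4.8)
The plan is to obtain the corollary immediately from the factorizations recorded in \Cref{rem:Dq_composite} together with the Borel measurability of the maps $\wt{D}_{q,\xi}$ established in \Cref{thm:Dq_xi_measurable}. The only genuine content beyond quoting these two facts is the observation that the random vectors appearing in \Cref{rem:Dq_composite} are measurable into $(\R^n, \B(\R^n))$, which is routine.

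First I would check that $(W_1,\ldots,W_n)\colon\Omega'\to\R^n$, $(X_1',\ldots,X_n')\colon\Omega'\to\R^n$, and $(X_1,\ldots,X_n)\colon\Omega\to\R^n$ are random vectors, i.e.\ $(\A',\B(\R^n))$- resp.\ $(\A,\B(\R^n))$-measurable. This follows from the coordinatewise criterion: a map into $\R^n$ all of whose components are random variables is Borel measurable. The $W_i$ and the $X_i$ are random variables by hypothesis (as stated in \Cref{sec:1sample_OVL} and at the start of \Cref{sec:proof.D_q_distribution}), and the $X_i' = F^-(W_i)$ are random variables because $F^-$ is nondecreasing, hence Borel measurable (cf.\ \Cref{thm:quantile_function_properties}), and a Borel function of a random variable is a random variable.

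Next, by \Cref{rem:Dq_composite} we have $D_q(U_n,U) = \wt{D}_{q,U}\circ(W_1,\ldots,W_n)$ and $D_q(F_n',F) = \wt{D}_{q,F}\circ(X_1',\ldots,X_n')$ on $(\Omega',\A',P')$, and $D_q(F_n,F) = \wt{D}_{q,F}\circ(X_1,\ldots,X_n)$ on $(\Omega,\A,P)$. By \Cref{thm:Dq_xi_measurable}, both $\wt{D}_{q,U}$ and $\wt{D}_{q,F}$ are Borel measurable functions on $\R^n$. A composition of a Borel measurable function with a measurable map is measurable with respect to the source $\sigma$-algebra, so each of the three quantities is measurable with respect to $\A'$ (for the first two) or $\A$ (for the last), and is therefore a random variable on the stated probability space.

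I do not expect any obstacle: the corollary is purely a bookkeeping consequence of \Cref{rem:Dq_composite,thm:Dq_xi_measurable}, and the one place where a short argument is needed --- measurability of the vector $(X_1',\ldots,X_n')$ into $(\R^n,\B(\R^n))$ --- is handled by the monotonicity, hence Borel measurability, of the quantile function $F^-$.
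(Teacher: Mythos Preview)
Your proof is correct and follows exactly the route indicated in the paper, which simply states that the corollary follows from \Cref{rem:Dq_composite,thm:Dq_xi_measurable}; you have merely made explicit the routine verification that the underlying vectors $(W_1,\ldots,W_n)$, $(X_1',\ldots,X_n')$, and $(X_1,\ldots,X_n)$ are Borel measurable into $\R^n$.
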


\begin{theorem}
For each $q\in\N_+$,
the probability measure on $\B(\R)$ induced by $D_q(U_n, U)$ and that by $D_q(F_n, F)$ are the same
if $F$ is continuous on $\R$.
\end{theorem}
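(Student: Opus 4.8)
The plan is to route the identity through the auxiliary probability space $(\Omega',\A',P')$ built in the subsection, making two comparisons in turn: first that $D_q(F_n,F)$ on $(\Omega,\A,P)$ and $D_q(F_n',F)$ on $(\Omega',\A',P')$ induce the same law on $\B(\R)$, and second that $D_q(F_n',F)$ and $D_q(U_n,U)$ (both living on $(\Omega',\A',P')$) induce the same law. Chaining the two gives the assertion, and only the second comparison will use continuity of $F$.

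For the first comparison I would invoke \Cref{rem:Dq_composite} to write both statistics as the \emph{same} measurable functional applied to the respective sample vectors: $D_q(F_n',F)=\wt D_{q,F}\circ(X_1',\ldots,X_n')$ and $D_q(F_n,F)=\wt D_{q,F}\circ(X_1,\ldots,X_n)$. Given $A\in\B(\R)$, \Cref{thm:Dq_xi_measurable} guarantees $\wt D_{q,F}^{-1}(A)\in\B(\R^n)$, so a push-forward computation
\[
P'\bigl(D_q(F_n',F)\in A\bigr)=P'\bigl((X_1',\ldots,X_n')\in\wt D_{q,F}^{-1}(A)\bigr)=P\bigl((X_1,\ldots,X_n)\in\wt D_{q,F}^{-1}(A)\bigr)=P\bigl(D_q(F_n,F)\in A\bigr)
\]
finishes it, the middle equality being exactly \Cref{cor:P(Xd)=P(X)}.

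For the second comparison I would use \Cref{thm:Dq'=Dq}: when $F$ is continuous on $\R$, $D_q(F_n',F)=D_q(U_n,U)$ holds $P'$-almost surely, and random variables that agree almost surely induce the same probability measure on $\B(\R)$. Combining the two comparisons yields that $D_q(U_n,U)$ and $D_q(F_n,F)$ induce the same measure on $\B(\R)$.

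There is essentially no hard step here; the only points requiring care are (i) ensuring that the identical functional $\wt D_{q,F}$ appears on both probability spaces, which is precisely the content of \Cref{rem:Dq_composite}, so that the push-forward argument applies verbatim, and (ii) citing Borel measurability of $\wt D_{q,F}$ from \Cref{thm:Dq_xi_measurable} so that $\wt D_{q,F}^{-1}(A)$ is a legitimate event in $\B(\R^n)$ to which \Cref{cor:P(Xd)=P(X)} can be applied.
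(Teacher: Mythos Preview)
Your proposal is correct and follows exactly the same approach as the paper: the paper's proof is precisely your two comparisons chained together into a single display, invoking \Cref{thm:Dq'=Dq} for the almost-sure equality $D_q(U_n,U)=D_q(F_n',F)$ when $F$ is continuous, then \Cref{rem:Dq_composite}, \Cref{thm:Dq_xi_measurable}, and \Cref{cor:P(Xd)=P(X)} for the push-forward identity between the laws of $D_q(F_n',F)$ and $D_q(F_n,F)$. The only cosmetic difference is that the paper runs the chain starting from $P'(D_q(U_n,U)^{-1}(A))$ rather than separating the two comparisons.
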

\begin{proof}
For any $A\in\B(\R)$, we have 
\begin{align*}
P'\bigl(D_q(U_n,U)^{-1}(A)\bigr) 
&= P'\bigl(D_q(F_n',F)^{-1}(A)\bigr)\\
&= \bigl(P'\circ (X_1',\ldots,X_n')^{-1}\bigr)\Bigl(\wt{D}_{q,F}^{-1}(A)\Bigr)\\
&= \bigl(P\circ (X_1,\ldots,X_n)^{-1}\bigr)\Bigl(\wt{D}_{q,F}^{-1}(A)\Bigr)\\
&= P\bigl(D_q(F_n, F)^{-1}(A)\bigr)
\end{align*}
by \Cref{cor:P(Xd)=P(X),rem:Dq_composite,thm:Dq'=Dq,thm:Dq_xi_measurable,cor:Dq_random_variable}.
\end{proof}

This theorem implies \Cref{thm:D_q_distribution}.

\subsection{Proof of \texorpdfstring{\Cref{thm:D2_FnF_limit}}{Theorem \ref{thm:D2_FnF_limit}}}
\label{sec:proof_D2_FnF_limit}
\begin{definition}
For $F,G\in\F$, $x\in\R$, and $\bs{v} = (v_1, v_2) \in\R_\le^2$, define
\begin{align*}
\delta_{F,G}(x) &\coloneq F(x) - G(x), \\
d_{F,G}(\bs{v}) &\coloneq \lvert\delta_{F,G} (v_1)\rvert + \lvert\delta_{F,G} (v_2) - \delta_{F,G} (v_1)\rvert + \lvert\delta_{F,G} (v_2)\rvert.
\end{align*}
We also put $\overline{\delta}_{F,G} \coloneq \sup_{x\in\R} \delta_{F,G}(x)$ and $\underline{\delta}_{F,G} \coloneq \inf_{x\in\R} \delta_{F, G}(x)$.
Note that $\underline{\delta}_{F,G} \le 0 \le \overline{\delta}_{F,G}$ since $\lim_{x\to\pm\infty}\delta_{F,G}(x) = 0$.
\end{definition}

\begin{lemma}[cf. {\cite[Lemma 7.4]{komaba22}}]\label{lem:sup_d}
For any $F,G\in\F$, $\sup_{\bs{v}\in\R_\le^2} d_{F, G}(\bs{v}) = 2(\overline{\delta}_{F,G} - \underline{\delta}_{F,G})$.
\end{lemma}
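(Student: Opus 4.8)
The plan is to prove the two inequalities $\sup_{\bs v\in\R_\le^2} d_{F,G}(\bs v)\le 2(\overline\delta_{F,G}-\underline\delta_{F,G})$ and $\sup_{\bs v\in\R_\le^2} d_{F,G}(\bs v)\ge 2(\overline\delta_{F,G}-\underline\delta_{F,G})$ separately, abbreviating $\delta=\delta_{F,G}$, $\overline\delta=\overline\delta_{F,G}$, $\underline\delta=\underline\delta_{F,G}$ throughout. For the upper bound, I would fix an arbitrary $\bs v=(v_1,v_2)\in\R_\le^2$ and bound each of the three terms $\lvert\delta(v_1)\rvert$, $\lvert\delta(v_2)-\delta(v_1)\rvert$, $\lvert\delta(v_2)\rvert$ using only that $\delta(v_1),\delta(v_2)\in[\underline\delta,\overline\delta]$. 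A case analysis on the signs of $\delta(v_1)$ and $\delta(v_2)$ (or, more cleanly, on which of the two is larger) shows that $d_{F,G}(\bs v)$ is a piecewise-linear function of $(\delta(v_1),\delta(v_2))$ on the rectangle $[\underline\delta,\overline\delta]^2$ whose maximum over that rectangle is attained at a corner; checking the four corners gives the value $2(\overline\delta-\underline\delta)$ (attained at $(\overline\delta,\underline\delta)$ and $(\underline\delta,\overline\delta)$), hence $d_{F,G}(\bs v)\le 2(\overline\delta-\underline\delta)$ for every $\bs v$, and taking the supremum gives the $\le$ direction.

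For the reverse inequality I would produce near-optimal points explicitly. Since $\overline\delta=\sup_x\delta(x)$ and $\underline\delta=\inf_x\delta(x)$, for any $\epsilon>0$ there exist $x_+,x_-\in\R$ with $\delta(x_+)>\overline\delta-\epsilon$ and $\delta(x_-)<\underline\delta+\epsilon$. I would then set $\bs v=(v_1,v_2)$ to be $(x_+,x_-)$ or $(x_-,x_+)$ according to which of $x_+,x_-$ is the smaller, so that $\bs v\in\R_\le^2$; in either case $d_{F,G}(\bs v)=\lvert\delta(x_+)\rvert+\lvert\delta(x_+)-\delta(x_-)\rvert+\lvert\delta(x_-)\rvert$. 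Using $\underline\delta\le 0\le\overline\delta$ (noted in the definition, from $\lim_{x\to\pm\infty}\delta(x)=0$), we have $\delta(x_+)\ge 0$ and $\delta(x_-)\le 0$ for $\epsilon$ small enough, so this expression equals $\delta(x_+)+(\delta(x_+)-\delta(x_-))-\delta(x_-)=2(\delta(x_+)-\delta(x_-))>2(\overline\delta-\underline\delta)-4\epsilon$. Letting $\epsilon\to 0$ yields $\sup_{\bs v\in\R_\le^2} d_{F,G}(\bs v)\ge 2(\overline\delta-\underline\delta)$.

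Combining the two bounds gives the claimed equality. I expect the only mildly delicate point to be the bookkeeping in the upper bound: one must make sure the sign/case analysis for $d_{F,G}(\bs v)$ genuinely covers all configurations of $(\delta(v_1),\delta(v_2))$ and that the corner-maximum claim is justified (e.g. by noting $d_{F,G}$ is convex in $(\delta(v_1),\delta(v_2))$ as a sum of absolute values of affine functions, so its max over the compact convex rectangle is at an extreme point). Everything else is elementary, and the construction in the lower bound is essentially forced by the definitions of $\overline\delta$ and $\underline\delta$. This mirrors the argument of \cite[Lemma 7.4]{komaba22}, adapted to the one-sample setting.
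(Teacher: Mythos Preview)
Your overall strategy matches the paper's: prove $\le$ and $\ge$ separately, the latter by choosing near-extremal points for $\delta$. For the upper bound you take a genuinely different route: instead of the paper's explicit three-case sign analysis on $(\delta(v_1),\delta(v_2))$, you observe that $(a,b)\mapsto |a|+|b-a|+|b|$ is convex (sum of absolute values of affine maps), so its maximum over the rectangle $[\underline\delta,\overline\delta]^2$ is at a corner, and checking the four corners (using $\underline\delta\le 0\le\overline\delta$) gives $2(\overline\delta-\underline\delta)$. This is correct and arguably cleaner than the paper's case split; the paper's version has the minor advantage of being self-contained without invoking convexity.

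There is, however, a small gap in your lower bound. You assert that ``$\delta(x_+)\ge 0$ and $\delta(x_-)\le 0$ for $\epsilon$ small enough'', but this can fail when $\overline\delta=0$ or $\underline\delta=0$: for instance if $\delta<0$ everywhere with $\sup\delta=0$, no choice of $x_+$ gives $\delta(x_+)\ge 0$. The paper sidesteps this by not using signs at all: from the triangle inequality $|\delta(v_1)|+|\delta(v_2)|\ge |\delta(v_1)-\delta(v_2)|$ one gets $d_{F,G}(\bs v)\ge 2|\delta(v_1)-\delta(v_2)|$ directly, and then $2|\delta(x_+)-\delta(x_-)|\ge 2(\delta(x_+)-\delta(x_-))>2(\overline\delta-\underline\delta)-4\epsilon$ without any sign assumption. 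Replacing your sign argument with this one-line triangle-inequality step closes the gap.
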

\begin{proof}
Given $\bs{w} = (w_1, w_2) \in\R_\le^2$,
we have
\begin{align*}
	\max\{\delta_{F,G}(w_1), \delta_{F,G}(w_2)\} + \min\{\delta_{F,G}(w_1), \delta_{F,G}(w_2)\} &= \delta_{F,G}(w_1) + \delta_{F,G}(w_2),\\
	\max\{\delta_{F,G}(w_1), \delta_{F,G}(w_2)\} - \min\{\delta_{F,G}(w_1), \delta_{F,G}(w_2)\} &= \lvert\delta_{F,G}(w_1) - \delta_{F,G}(w_2)\rvert.
\end{align*}
If $\delta_{F,G}(w_1)>0$ and $\delta_{F,G}(w_2)>0$, then
\begin{align*}
	d_{F, G}(\bs{w}) 
	&= \delta_{F,G}(w_1) + \delta_{F,G}(w_2) + \lvert\delta (w_2) - \delta (w_1)\rvert \\
	&= 2\max\{\delta_{F,G}(w_1), \delta_{F,G}(w_2)\} \\
	&\le 2\overline{\delta}_{F,G} \\
	&\le 2(\overline{\delta}_{F,G} - \underline{\delta}_{F,G}).
\end{align*}
If $\delta_{F,G}(w_1)<0$ and $\delta_{F,G}(w_2)<0$, then
\begin{align*}
	d_{F, G}(\bs{w}) 
	&= -(\delta_{F,G}(w_1) + \delta_{F,G}(w_2)) + \lvert\delta (w_2) - \delta (w_1)\rvert \\
	&= -2\min\{\delta_{F,G}(w_1), \delta_{F,G}(w_2)\} \\
	&\le -2\underline{\delta}_{F,G} \\
	&\le 2(\overline{\delta}_{F,G} - \underline{\delta}_{F,G}).
\end{align*}
If $\delta_{F,G}(w_1)\delta_{F,G}(w_2)\le 0$, then $\lvert\delta_{F,G} (w_1)\rvert + \lvert\delta_{F,G} (w_2)\rvert = \lvert\delta_{F,G} (w_1) - \delta_{F,G} (w_2)\rvert$, hence
\begin{align*}
	d_{F, G}(\bs{w}) 
	&= 2\lvert\delta_{F,G}(w_1) - \delta_{F,G}(w_2)\rvert \\
	&\le 2(\overline{\delta}_{F,G} - \underline{\delta}_{F,G}).
\end{align*}
Taken together, $d_{F, G}(\bs{w}) \le 2(\overline{\delta}_{F,G} - \underline{\delta}_{F,G})$ holds in general.
Since $\bs{w}$ was arbitrary, we obtain $\sup_{\bs{v}\in\R_\le^2} d_{F, G}(\bs{v}) \le 2(\overline{\delta}_{F,G} - \underline{\delta}_{F,G})$.

On the other hand, for any $\epsilon>0$, there exist $w_1', w_2' \in\R$ such that $\overline{\delta}_{F,G} - \epsilon/4 < \delta_{F, G} (w_1')$ and $\underline{\delta}_{F,G} + \epsilon/4 > \delta_{F, G} (w_2')$.
Putting $w_1 = \min\{w_1', w_2'\}$, $w_2 = \max\{w_1', w_2'\}$, and $\bs{w} = (w_1, w_2)\in\R_\le^2$, we have 
\begin{align*}
2(\overline{\delta}_{F,G} - \underline{\delta}_{F,G}) 
&< 2(\delta_{F,G}(w_1') - \delta_{F,G}(w_2')) + \epsilon \\
&\le 2\lvert\delta_{F,G}(w_2) - \delta_{F,G}(w_1)\rvert + \epsilon \\
&\le d_{F,G}(\bs{w}) + \epsilon.
\end{align*}
Since $\epsilon$ was arbitrary, we obtain 
$2(\overline{\delta}_{F,G} - \underline{\delta}_{F,G}) \le \sup_{\bs{v}\in\R_\le^2} d_{F, G}(\bs{v})$.
\end{proof}

\begin{lemma}\label{lem:D2_delta}
For any $F, G \in\F$, $D_2(F, G) = \overline{\delta}_{F,G} - \underline{\delta}_{F,G}$.
\end{lemma}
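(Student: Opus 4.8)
The plan is to read off the result from two ingredients already in hand: the alternative expression for $D_q$ recorded in \Cref{rem:another_Dq_def} and the supremum computation in \Cref{lem:sup_d}. First I would specialize
\[
D_q(F,G) = \frac{1}{2}\sup_{\bs{v}\in\R_\le^q}\sum_{i=0}^q \bigl\lvert F|_{v_i}^{v_{i+1}} - G|_{v_i}^{v_{i+1}}\bigr\rvert
\]
to $q=2$, so that $\bs{v}=(v_1,v_2)$ with $v_0=-\infty$ and $v_3=\infty$. Unwinding the three summands ($i=0,1,2$) and using $F(-\infty)=G(-\infty)=0$ and $F(\infty)=G(\infty)=1$, each telescoping difference $F|_{v_i}^{v_{i+1}} - G|_{v_i}^{v_{i+1}}$ becomes a difference of values of $\delta_{F,G}$: the $i=0$ term is $\lvert\delta_{F,G}(v_1)\rvert$, the $i=1$ term is $\lvert\delta_{F,G}(v_2)-\delta_{F,G}(v_1)\rvert$, and the $i=2$ term is $\lvert\delta_{F,G}(v_2)\rvert$ (here $1-F(v_2)-(1-G(v_2))=-\delta_{F,G}(v_2)$). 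Hence the inner sum equals exactly $d_{F,G}(\bs{v})$.

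Then I would invoke \Cref{lem:sup_d}, which gives $\sup_{\bs{v}\in\R_\le^2} d_{F,G}(\bs{v}) = 2(\overline{\delta}_{F,G}-\underline{\delta}_{F,G})$. Combining the two displays yields $D_2(F,G) = \tfrac12\cdot 2(\overline{\delta}_{F,G}-\underline{\delta}_{F,G}) = \overline{\delta}_{F,G}-\underline{\delta}_{F,G}$, which is the claim.

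There is essentially no serious obstacle: the substantive work has been front-loaded into \Cref{rem:another_Dq_def} and \Cref{lem:sup_d}, and the only point requiring a little care is the bookkeeping at the endpoints $v_0=-\infty$ and $v_3=\infty$ needed to see that the two boundary summands collapse to $\lvert\delta_{F,G}(v_1)\rvert$ and $\lvert\delta_{F,G}(v_2)\rvert$. If one preferred not to cite \Cref{rem:another_Dq_def}, one could instead start from \Cref{eq:D_q,eq:r} and rewrite each $\min\{a,b\}$ as $\tfrac12(a+b-\lvert a-b\rvert)$, but this merely repeats the manipulation already carried out in the proof of \Cref{lem:Dq_qD1}, so routing through the remark is the cleaner option.
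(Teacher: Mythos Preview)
Your proposal is correct and follows exactly the paper's own proof: the paper writes $D_2(F,G)=\tfrac12\sup_{\bs v\in\R_\le^2}\sum_{i=0}^2\bigl\lvert F|_{v_i}^{v_{i+1}}-G|_{v_i}^{v_{i+1}}\bigr\rvert=\tfrac12\sup_{\bs v\in\R_\le^2}d_{F,G}(\bs v)=\overline{\delta}_{F,G}-\underline{\delta}_{F,G}$, citing \Cref{rem:another_Dq_def} and \Cref{lem:sup_d}. Your additional bookkeeping at the endpoints is exactly the justification behind the paper's middle equality.
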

\begin{proof}
We have
\begin{align*}
	D_2 (F, G) 
	&= \frac{1}{2}  \sup_{\bs{v}\in\R_{\le}^2} \sum_{i = 0}^2 \bigl\lvert F|_{v_i}^{v_{i + 1}} - G|_{v_i}^{v_{i + 1}}\bigr\rvert \\
	&= \frac{1}{2}  \sup_{\bs{v}\in\R_{\le}^2} d_{F, G}(\bs{v}) \\
	&= \overline{\delta}_{F,G} - \underline{\delta}_{F,G}
\end{align*}
by \Cref{rem:another_Dq_def,lem:sup_d}.
\end{proof}

\begin{definition}
{\normalfont (See \cite[pages 353 and 443]{dudley02} for reference.)}
Let $T$ be a set and $(\Omega, \A, P)$ a probability space.
A mapping $Y\colon T\times\Omega \to \R$ is called a \emph{stochastic process} 
if $Y_t \coloneq Y(t, {\,\cdot\,})\colon \Omega \to \R$ is measurable for each $t \in T$.
We say that $Y$ is Gaussian if $(Y_{t_1}, \ldots, Y_{t_m})\colon \Omega \to \R^m$ is Gaussian for any $t_1, \ldots, t_m \in T$.
\end{definition}

\begin{definition}
{\normalfont (See \cite[page 445]{dudley02} for reference.)}
Let $Y\colon T\times\Omega \to \R$ be a Gaussian stochastic process with $T = [0, 1]$.
If the following conditions hold:
\begin{itemize}
\item $\E[Y_t] = 0$ for any $t\in T$,
\item $\E[Y_s Y_t] = s(1-t)$ for any $s,t\in T$ with $s\le t$,
\item $Y$ is \emph{sample continuous}, i.e., $Y({\,\cdot\,}, \omega)\colon T\to \R$ is continuous for any $\omega \in \Omega$,
\end{itemize}
then $Y$ is called a \emph{Brownian bridge}.
\end{definition}

\begin{theorem}
{\normalfont \cite[Proposition 12.3.4]{dudley02}.}
For a Brownian bridge $Y$ and any $a>0$,
\begin{equation*}
P\biggl(\sup_{t\in[0, 1]} \lvert Y_t\rvert \ge a \biggr) = 2\sum_{i=1}^\infty (-1)^{i-1} \exp(-2i^2a^2).
\end{equation*}
\end{theorem}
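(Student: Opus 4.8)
The plan is to compute the complementary probability $P\bigl(\sup_{t\in[0,1]}|Y_t| < a\bigr) = P\bigl(-a < Y_t < a\text{ for all }t\in[0,1]\bigr)$ via the reflection principle (method of images), and then subtract it from $1$. Write $\varphi$ for the density of the standard normal distribution $\Normal(0,1)$, so that $\varphi(0) = 1/\sqrt{2\pi}$.

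First I would realize the Brownian bridge $Y$ as standard Brownian motion $B$ on $[0,1]$ conditioned on $B_1 = 0$. By the method of images, reflecting the source $0$ repeatedly across the two walls $\pm a$ with the sign flipping at each reflection, the sub-probability density of $B_1$ on the event $\{-a < B_t < a\text{ for all }t\in[0,1]\}$ is
\[
	g(0, y) = \sum_{k\in\Z}\bigl(\varphi(y - 4ka) - \varphi(y - 4ka - 2a)\bigr)\qquad (y\in(-a,a)),
\]
the images of $0$ carrying weight $+1$ sitting at the points $4ka$ and those carrying weight $-1$ at the points $4ka + 2a$. Since $Y$ is $B$ conditioned on $B_1 = 0$ and $B_1$ has density $\varphi$ at $0$, dividing by $\varphi(0)$ (which cancels the common Gaussian normalizing factor) gives
\[
	P\Bigl(\sup_{t\in[0,1]}|Y_t| < a\Bigr) = \frac{g(0,0)}{\varphi(0)} = \sum_{k\in\Z}\Bigl(\exp(-8k^2a^2) - \exp\bigl(-2(2k+1)^2a^2\bigr)\Bigr).
\]

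Next I would reorganize this (absolutely convergent) series by the parity of the index. As $k$ ranges over $\Z$, the integer $2k$ ranges over all even integers, so $\sum_{k\in\Z}\exp(-8k^2a^2) = \sum_{k\in\Z}\exp\bigl(-2(2k)^2a^2\bigr)$ equals $1$ plus twice the sum of $\exp(-2j^2a^2)$ over the positive even integers $j$; likewise $2k+1$ ranges over all odd integers, so $\sum_{k\in\Z}\exp\bigl(-2(2k+1)^2a^2\bigr)$ equals twice the sum of $\exp(-2j^2a^2)$ over the positive odd integers $j$. Subtracting, $P\bigl(\sup_{t\in[0,1]}|Y_t| < a\bigr) = 1 + 2\sum_{j=1}^\infty(-1)^j\exp(-2j^2a^2)$; passing to the complement — the distribution of $\sup_{t\in[0,1]}|Y_t|$ having no atom at $a$, so that the boundary event is negligible — yields $P\bigl(\sup_{t\in[0,1]}|Y_t|\ge a\bigr) = 2\sum_{j=1}^\infty(-1)^{j-1}\exp(-2j^2a^2)$, as claimed.

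The series bookkeeping and the termwise-convergence checks are routine for each fixed $a > 0$; the main obstacle is the first step. One must rigorously justify both the image-series representation of the Dirichlet (killed) transition density on $(-a,a)$ and the conditioning that produces the bridge, i.e., that conditioning Brownian motion on the null event $\{B_1 = 0\}$ may legitimately be executed by passing to the ratio of the (jointly continuous) killed and free transition densities. I would settle this by invoking the standard theory of one-dimensional absorbed Brownian motion — equivalently, Poisson summation applied to the Dirichlet heat kernel of a bounded interval; alternatively, to keep the argument purely pathwise, one may instead run an inclusion–exclusion over the successive hitting times of the levels $\pm a$ by the bridge, reflecting at $a$, then at $-a$, and so on, so that the paths crossing the strip $k$ times are mapped onto bridges with endpoints $\pm 2ka$, whose alternating signs reproduce $2\sum_{j\ge 1}(-1)^{j-1}\exp(-2j^2a^2)$ directly.
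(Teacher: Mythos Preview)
The paper does not prove this statement at all: it is quoted verbatim as \cite[Proposition~12.3.4]{dudley02} and used as a black box, so there is no ``paper's own proof'' to compare against. Your proposal, by contrast, supplies an actual argument, and it is the classical one: realize the bridge as Brownian motion conditioned on $B_1=0$, write the killed transition density on the strip $(-a,a)$ by the method of images, evaluate at $y=0$, divide by $\varphi(0)$, and reorganize the image series by parity. The arithmetic checks out (the positive images of $0$ sit at $4ka$ and the negative ones at $(4k+2)a$, giving exactly the even/odd split you describe), and the passage from $<a$ to $\ge a$ is handled by your remark on the absence of an atom.

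Where your write-up is honest about its own soft spot is exactly right: the only nontrivial ingredient is justifying the image-series formula for the absorbed density and the density-ratio interpretation of the bridge conditioning. Both are standard (Dudley himself proves the Kolmogorov formula by essentially this reflection/inclusion--exclusion route), so a citation there would close the argument. In short: your proof is correct and self-contained modulo a standard reference; the paper simply outsources the whole thing to Dudley.
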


\begin{theorem}\label{thm:supinf_brown_conv}
{\normalfont \cite[Proposition 12.3.6]{dudley02}.}
For a Brownian bridge $Y$ and any $a>0$,
\begin{equation*}
P\biggl(\sup_{t\in[0, 1]} Y_t - \inf_{t\in[0, 1]} Y_t \ge a \biggr) = 2\sum_{i=1}^\infty (4i^2 a^2 -1) \exp(-2i^2 a^2).
\end{equation*}
\end{theorem}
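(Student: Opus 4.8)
The plan is to recover the full law of the range $R \coloneq \sup_{t\in[0,1]} Y_t - \inf_{t\in[0,1]} Y_t$ from the two-sided staying probability of the bridge, then differentiate and integrate an explicit theta-type series. Write $M \coloneq \sup_{t\in[0,1]} Y_t \ge 0$ and $m \coloneq \inf_{t\in[0,1]} Y_t \le 0$, so that $R = M - m$, and for $\alpha,\beta>0$ set $\Psi(\alpha,\beta) \coloneq P(-\alpha < Y_t < \beta \text{ for all } t) = P(M<\beta,\,-m<\alpha)$. First I would establish the band formula
\begin{equation*}
\Psi(\alpha,\beta) = \sum_{k=-\infty}^\infty \Bigl[ e^{-2k^2(\alpha+\beta)^2} - e^{-2(\alpha + k(\alpha+\beta))^2} \Bigr]
\end{equation*}
by the reflection principle (method of images): for Brownian motion started at $0$ on $(-\alpha,\beta)$ with absorbing barriers, the transition density to $0$ at time $1$ is the alternating sum of Gaussian image charges placed at the points generated by repeated reflection in the two barriers, and dividing by the unconstrained density $1/\sqrt{2\pi}$ at $0$ conditions the motion to return to $0$ and yields $\Psi$. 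The symmetric case $\alpha=\beta=a$ collapses this series to $\sum_{k}(-1)^k e^{-2k^2a^2}$, reproducing the input of the preceding proposition on $\sup_t\lvert Y_t\rvert$ and serving as a consistency check.

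Next, since $\Psi(\alpha,\beta)$ is the joint distribution function of the pair $(-m, M)$, its joint density is $\partial_\alpha\partial_\beta\Psi$. The key algebraic observation is that, writing $s=\alpha+\beta$, the first family of terms differentiates to $4k^2(4k^2s^2-1)e^{-2k^2s^2}$, which depends on $(\alpha,\beta)$ only through $s$, while the second family differentiates to $4k(k+1)(4u_k^2-1)e^{-2u_k^2}$ with $u_k=(1+k)\alpha+k\beta$. Because $R=(-m)+M$, its density $g_R(s)$ is the integral of this joint density along the segment $\{\alpha+\beta=s,\ 0<\alpha<s\}$, on which $u_k=\alpha+ks$. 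The first family, being constant in $\alpha$, contributes $s\sum_k 4k^2(4k^2s^2-1)e^{-2k^2s^2}$; for the second family I would substitute $w=\alpha+ks$ and use the elementary antiderivative $\int (4w^2-1)e^{-2w^2}\,dw = -w e^{-2w^2}$, so that the integral over $[ks,(k+1)s]$ telescopes across consecutive integers and collapses to $-8s\sum_k k^2 e^{-2k^2s^2}$. Combining the two families and exploiting the evenness in $k$ gives
\begin{equation*}
g_R(s) = 8s\sum_{k=1}^\infty k^2(4k^2 s^2 - 3)\,e^{-2k^2 s^2}.
\end{equation*}

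Finally I would integrate this density over $[a,\infty)$ term by term. Using the antiderivative $H_k(s)=(2-8k^2s^2)e^{-2k^2s^2}$, which satisfies $H_k'(s)=8s\,k^2(4k^2s^2-3)e^{-2k^2s^2}$ and vanishes as $s\to\infty$, each term integrates to $(8k^2a^2-2)e^{-2k^2a^2}$, whence $P(R\ge a)=\int_a^\infty g_R(s)\,ds = 2\sum_{k=1}^\infty(4k^2a^2-1)e^{-2k^2a^2}$, as claimed. I expect the main obstacle to be analytic rather than algebraic: justifying the method-of-images band formula for the pinned process, and above all legitimizing the term-by-term differentiation (twice) and integration of the theta-type series. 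These interchanges are controlled by the rapid Gaussian decay — on any set $s\ge a>0$ the series and its differentiated counterparts are dominated by convergent series such as $\sum_k k^4 e^{-2k^2a^2}$, so dominated convergence applies and the telescoping rearrangement is licensed by absolute convergence — but the behaviour near $s=0$, where the band degenerates, requires the most care.
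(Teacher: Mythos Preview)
The paper does not prove this theorem at all: it is stated with a citation to \cite[Proposition 12.3.6]{dudley02} and used as a black box in the proof of Theorem~\ref{thm:D2_FnF_limit_proof}. So there is no in-paper proof to compare your proposal against.

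That said, your outline is correct and is essentially the classical derivation that Dudley himself gives. The band formula $\Psi(\alpha,\beta)=\sum_k\bigl[e^{-2k^2(\alpha+\beta)^2}-e^{-2(\alpha+k(\alpha+\beta))^2}\bigr]$ is the standard image expansion for Brownian motion absorbed on $(-\alpha,\beta)$, evaluated at the return point $0$ and normalised; your mixed second derivatives, the substitution $u_k=\alpha+ks$ along the line $\alpha+\beta=s$, the telescoping via the antiderivative $-we^{-2w^2}$, and the final antiderivative $H_k(s)=(2-8k^2s^2)e^{-2k^2s^2}$ all check out line by line. The analytic concerns you flag --- legitimising two term-by-term differentiations and one integration, plus the degeneration of the band near $s=0$ --- are exactly the right ones, and your observation that everything is dominated on $\{s\ge a>0\}$ by series of the form $\sum_k k^4 e^{-2k^2a^2}$ is what makes dominated convergence (and the absolute convergence needed for the telescoping re-indexing) go through. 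One small point worth making explicit in a full write-up is that $(-m,M)$ is absolutely continuous on $(0,\infty)^2$, so that $\partial_\alpha\partial_\beta\Psi$ really is a density there; this follows from the smoothness of $\Psi$ away from the axes.
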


\begin{definition}
{\normalfont (See \cite[Section 12]{billing99} for reference.)}
Let $\D$ be the space of real functions on $[0, 1]$ that are right-continuous and have left-hand limits (such functions are called c\`adl\`ag functions).
Let $\Lambda$ be the set of strictly increasing, continuous mappings of $[0, 1]$ onto itself.
For $g, h\in\D$, define 
\begin{align*}
\lVert g\rVert &\coloneq \sup_{t\in[0, 1]} \lvert g(t)\rvert < \infty, \\
d(g, h) &\coloneq \inf_{\lambda \in \Lambda} \max \{\lVert\lambda - I\rVert, \lVert g - h\circ \lambda \rVert \} < \infty,
\end{align*}
where $I$ denotes the identity map on $[0, 1]$.
The function $d$ is a metric on $\D$, which defines the Skorohod topology.
\end{definition}

\begin{lemma}\label{lem:d_converge}
{\normalfont (See \cite[page 124]{billing99} for reference.)}
For an element $g$ and a sequence $\{g_n\}$ in $\D$, $\lim_{n\to\infty} g_n = g$ if and only if $\lim_{n\to\infty} \lVert\lambda_n - I\rVert = 0$ and $\lim_{n\to\infty} \lVert g - g_n\circ\lambda_n\rVert = 0$ for some sequence $\{\lambda_n\}$ in $\Lambda$.
\end{lemma}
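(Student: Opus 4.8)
The plan is to deduce both implications directly from the definition of the Skorohod metric $d$, using two elementary facts about reparametrizations $\lambda\in\Lambda$. Since each $\lambda\in\Lambda$ is a continuous strictly increasing bijection of $[0,1]$ onto itself, the substitution $t=\lambda(s)$ gives $\sup_{t\in[0,1]}\lvert\psi(t)\rvert=\sup_{s\in[0,1]}\lvert\psi(\lambda(s))\rvert$ for every real function $\psi$ on $[0,1]$. First I would apply this with $\psi=\lambda^{-1}-I$ to get $\lVert\lambda^{-1}-I\rVert=\lVert\lambda-I\rVert$, and with $\psi=g-h\circ\lambda^{-1}$ to get $\lVert g-h\circ\lambda^{-1}\rVert=\lVert g\circ\lambda-h\rVert$, i.e.\ $\lVert g-h\circ\mu\rVert=\lVert g\circ\mu^{-1}-h\rVert$ for all $g,h\in\D$ and $\mu\in\Lambda$. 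I would also record that $\Lambda$ is a group under composition with identity $I$, so that $\mu^{-1}\in\Lambda$ whenever $\mu\in\Lambda$.

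For the ``if'' direction, suppose $\{\lambda_n\}\subset\Lambda$ satisfies $\lVert\lambda_n-I\rVert\to0$ and $\lVert g-g_n\circ\lambda_n\rVert\to0$. Taking $\mu_n\coloneq\lambda_n^{-1}\in\Lambda$ as a competitor in the infimum defining $d(g_n,g)$ gives $d(g_n,g)\le\max\bigl\{\lVert\mu_n-I\rVert,\ \lVert g_n-g\circ\mu_n\rVert\bigr\}$, and by the two identities above the right-hand side equals $\max\bigl\{\lVert\lambda_n-I\rVert,\ \lVert g-g_n\circ\lambda_n\rVert\bigr\}\to0$. Hence $\lim_{n\to\infty}g_n=g$ in the Skorohod topology.

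For the ``only if'' direction, suppose $d(g_n,g)\to0$. Because $d(g_n,g)$ is an infimum that need not be attained, for each $n$ I would choose $\mu_n\in\Lambda$ with $\max\{\lVert\mu_n-I\rVert,\lVert g_n-g\circ\mu_n\rVert\}<d(g_n,g)+1/n$, so that $\lVert\mu_n-I\rVert\to0$ and $\lVert g_n-g\circ\mu_n\rVert\to0$. Setting $\lambda_n\coloneq\mu_n^{-1}\in\Lambda$, the identities give $\lVert\lambda_n-I\rVert=\lVert\mu_n-I\rVert\to0$ and $\lVert g-g_n\circ\lambda_n\rVert=\lVert g\circ\mu_n-g_n\rVert=\lVert g_n-g\circ\mu_n\rVert\to0$, which exhibits the required sequence.

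The argument is essentially bookkeeping with the definition of $d$; the only points requiring care — and what I would flag as the main (minor) obstacle — are keeping the two composition sides consistent, i.e.\ not confusing $\lVert g-g_n\circ\lambda_n\rVert$ with $\lVert g\circ\lambda_n-g_n\rVert$, and remembering to pass to near-minimizers since the infimum defining $d$ is generally not achieved.
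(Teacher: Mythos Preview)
Your proof is correct and follows essentially the same approach as the paper's: pick near-minimizers of the infimum defining $d$ for one direction, and use a competitor for the other. The only difference is that you work with $d(g_n,g)=\inf_{\lambda}\max\{\lVert\lambda-I\rVert,\lVert g_n-g\circ\lambda\rVert\}$ and then invert via $\lambda_n=\mu_n^{-1}$, whereas the paper simply uses $d(g,g_n)=\inf_{\lambda}\max\{\lVert\lambda-I\rVert,\lVert g-g_n\circ\lambda\rVert\}$, whose second entry already has the composition order of the statement; this makes your reparametrization identities and inversion step unnecessary, though they are correct.
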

\begin{proof}
If $\lim_{n\to\infty} g_n = g$, then $\lim_{n\to\infty}d(g, g_n) = \lim_{n\to\infty}\inf_{\lambda \in \Lambda} \max \{\lVert\lambda - I\rVert, \lVert g - g_n\circ \lambda \rVert \} = 0$.
Since there exists $\lambda_n\in\Lambda$ for each $n\in\N_+$ such that 
\begin{align*}
\inf_{\lambda \in \Lambda} \max \{\lVert\lambda - I\rVert, \lVert g - g_n\circ \lambda \rVert \} 
&\le \max \{\lVert\lambda_n - I\rVert, \lVert g - g_n\circ \lambda_n \rVert \} \\ 
&< \inf_{\lambda \in \Lambda} \max \{\lVert\lambda - I\rVert, \lVert g - g_n\circ \lambda \rVert \} + \frac{1}{n},
\end{align*}
$\lim_{n\to\infty} \lVert\lambda_n - I\rVert = 0$ and $\lim_{n\to\infty} \lVert g - g_n\circ\lambda_n\rVert = 0$ hold.

On the other hand, suppose 
$\lim_{n\to\infty} \lVert\lambda_n - I\rVert = 0$ and $\lim_{n\to\infty} \lVert g - g_n\circ\lambda_n\rVert = 0$ for some sequence $\{\lambda_n\}$ in $\Lambda$.
Then for any $\epsilon>0$, there exists $N\in\N$ such that $n>N$ implies $\lVert\lambda_n - I\rVert < \epsilon$ and $\lVert g - g_n\circ\lambda_n\rVert < \epsilon$, so that $\inf_{\lambda \in \Lambda} \max \{\lVert\lambda - I\rVert, \lVert g - g_n\circ \lambda \rVert \} < \epsilon$.
Hence $\lim_{n\to\infty} g_n = g$.
\end{proof}

\begin{lemma}\label{lem:sup_inf_D}
For $g, h \in \D$, the following inequalities hold:
\begin{align*}
\biggl\lvert \sup_{t\in[0, 1]} g(t) -  \sup_{t\in[0, 1]} h(t)  \biggr\rvert \le \lVert g - h\rVert,\qquad
\biggl\lvert \inf_{t\in[0, 1]} g(t) -  \inf_{t\in[0, 1]} h(t)  \biggr\rvert \le \lVert g - h\rVert.
\end{align*}
\end{lemma}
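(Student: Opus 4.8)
The plan is to obtain both inequalities from the elementary fact that one function cannot exceed another by more than their uniform distance, so no real machinery is needed.

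First I would fix $g,h\in\D$ and note that, by the definition of $\D$, the quantities $\sup_{t\in[0,1]}g(t)$, $\sup_{t\in[0,1]}h(t)$, $\inf_{t\in[0,1]}g(t)$, and $\inf_{t\in[0,1]}h(t)$ are all finite real numbers, because $\lVert g\rVert<\infty$ and $\lVert h\rVert<\infty$; this is what makes the stated differences well defined.

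Next, for the supremum inequality: for every $t\in[0,1]$ one has $g(t)=h(t)+\bigl(g(t)-h(t)\bigr)\le h(t)+\lVert g-h\rVert\le\sup_{s\in[0,1]}h(s)+\lVert g-h\rVert$. Taking the supremum over $t\in[0,1]$ gives $\sup_{t\in[0,1]}g(t)\le\sup_{t\in[0,1]}h(t)+\lVert g-h\rVert$, i.e.\ $\sup_{t\in[0,1]}g(t)-\sup_{t\in[0,1]}h(t)\le\lVert g-h\rVert$. Interchanging the roles of $g$ and $h$ and using $\lVert h-g\rVert=\lVert g-h\rVert$ yields the reverse bound, and combining the two gives the first claimed inequality.

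Finally, for the infimum inequality I would apply the already-proved supremum inequality to $-g$ and $-h$, which again lie in $\D$ (a c\`adl\`ag function remains c\`adl\`ag under negation): since $\inf_{t}g(t)=-\sup_{t}\bigl(-g(t)\bigr)$, $\inf_{t}h(t)=-\sup_{t}\bigl(-h(t)\bigr)$, and $\lVert(-g)-(-h)\rVert=\lVert g-h\rVert$, we obtain $\bigl\lvert\inf_{t}g(t)-\inf_{t}h(t)\bigr\rvert=\bigl\lvert\sup_{t}(-g(t))-\sup_{t}(-h(t))\bigr\rvert\le\lVert g-h\rVert$. (Alternatively, the same direct argument as for the supremum works verbatim with the inequalities reversed.) There is essentially no obstacle here; the only point requiring a moment's care is the finiteness of the suprema and infima, which is built into the definition of $\D$.
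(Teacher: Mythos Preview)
Your proof is correct and follows essentially the same elementary approach as the paper: bound $\sup g-\sup h$ by $\lVert g-h\rVert$ via a pointwise comparison, then swap the roles of $g$ and $h$, and handle the infimum case analogously. The only cosmetic differences are that the paper phrases the supremum bound via an $\epsilon$-argument (picking $t'$ with $g(t')>\sup g-\epsilon$) rather than your direct pointwise inequality, and it says the infimum case is ``similar'' rather than invoking the negation trick you use.
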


\begin{proof}
For any $\epsilon>0$, there exists $t'\in[0, 1]$ such that $\sup g(t) - \epsilon < g(t') \le \sup g(t)$.
Then $\sup g(t) - \sup h(t) < g(t') - \sup h(t) + \epsilon \le g(t') - h(t') + \epsilon \le \lVert g - h\rVert + \epsilon$.
Since $\epsilon$ was arbitrary, $\sup g(t) - \sup h(t) \le  \lVert g - h\rVert$ holds.
Similarly,  $\sup h(t) - \sup g(t) \le  \lVert g - h\rVert$ holds, and the first inequality is proved.
The second one can be proved in a similar way.
\end{proof}

\begin{theorem}\label{thm:D_continuous}
The function $\varphi\colon \D \to \R$ defined by $\varphi(g) \coloneq \sup_{t\in[0,1]} g(t) - \inf_{t\in[0,1]} g(t)$ is continuous.
\end{theorem}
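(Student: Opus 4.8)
The plan is to establish sequential continuity, which is enough because the Skorohod metric $d$ makes $\D$ a metric space. So I would fix $g\in\D$ together with a sequence $\{g_n\}$ in $\D$ satisfying $\lim_{n\to\infty} g_n = g$, and aim to show $\lim_{n\to\infty}\varphi(g_n) = \varphi(g)$.

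First I would apply \Cref{lem:d_converge} to obtain a sequence $\{\lambda_n\}$ in $\Lambda$ with $\lim_{n\to\infty}\lVert\lambda_n - I\rVert = 0$ and $\lim_{n\to\infty}\lVert g - g_n\circ\lambda_n\rVert = 0$. The crucial observation is that $\varphi$ is invariant under right-composition with any element of $\Lambda$: since each $\lambda\in\Lambda$ maps $[0,1]$ onto $[0,1]$, the functions $g_n\circ\lambda_n$ and $g_n$ take exactly the same set of values, so $\sup_{t\in[0,1]}(g_n\circ\lambda_n)(t) = \sup_{t\in[0,1]} g_n(t)$ and $\inf_{t\in[0,1]}(g_n\circ\lambda_n)(t) = \inf_{t\in[0,1]} g_n(t)$, whence $\varphi(g_n\circ\lambda_n) = \varphi(g_n)$ for every $n$.

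Then I would combine this invariance with \Cref{lem:sup_inf_D} applied to the pair $g$ and $g_n\circ\lambda_n$, obtaining
\[
\lvert\varphi(g) - \varphi(g_n)\rvert = \lvert\varphi(g) - \varphi(g_n\circ\lambda_n)\rvert \le \biggl\lvert\sup_{t\in[0,1]} g(t) - \sup_{t\in[0,1]}(g_n\circ\lambda_n)(t)\biggr\rvert + \biggl\lvert\inf_{t\in[0,1]} g(t) - \inf_{t\in[0,1]}(g_n\circ\lambda_n)(t)\biggr\rvert \le 2\lVert g - g_n\circ\lambda_n\rVert,
\]
and the right-hand side tends to $0$ as $n\to\infty$. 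This yields $\lim_{n\to\infty}\varphi(g_n) = \varphi(g)$ and finishes the proof.

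The triangle-inequality bookkeeping is entirely routine, and I do not foresee a real obstacle. The one point deserving care is the invariance $\varphi(g\circ\lambda) = \varphi(g)$ for $\lambda\in\Lambda$, which relies precisely on elements of $\Lambda$ being \emph{onto} $[0,1]$ rather than merely strictly increasing continuous self-maps; this is what guarantees that $g\circ\lambda$ attains the same supremum and infimum as $g$, so that the Skorohod-closeness provided by \Cref{lem:d_converge} can be transferred into uniform-norm closeness for the purposes of bounding $\varphi$.
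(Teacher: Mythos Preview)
Your proof is correct and follows essentially the same approach as the paper: both use \Cref{lem:d_converge} to obtain the sequence $\{\lambda_n\}$, exploit the invariance $\sup_t g_n(t)=\sup_t g_n(\lambda_n(t))$ and $\inf_t g_n(t)=\inf_t g_n(\lambda_n(t))$ (which the paper invokes inline without comment), and then apply \Cref{lem:sup_inf_D} to bound $\lvert\varphi(g_n)-\varphi(g)\rvert$ by $2\lVert g-g_n\circ\lambda_n\rVert$. Your explicit remark that this invariance rests on the surjectivity of elements of $\Lambda$ is a nice clarification.
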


\begin{proof}
For a given $g\in\D$,
let $\{g_n\}$ be a sequence in $\D$ such that $\lim_{n\to\infty} g_n = g$.
It suffices to prove that $\lim_{n\to\infty} \varphi(g_n) = \varphi(g)$.
By \Cref{lem:d_converge}, there exists a sequence $\{\lambda_n\}$ in $\Lambda$ such that $\lim_{n\to\infty} \lVert\lambda_n - I\rVert = 0$ and $\lim_{n\to\infty} \lVert g - g_n\circ\lambda_n\rVert = 0$.
By  \Cref{lem:sup_inf_D}, we have
\begin{align*}
\lvert\varphi(g_n) - \varphi(g)\rvert
&= \lvert\sup g_n(t) - \inf g_n(t) - \sup g(t) + \inf g(t)\rvert \\
&\le \lvert\sup g_n(t) - \sup g(t)\rvert +  \lvert\inf g_n(t)  - \inf g(t)\rvert \\
&= \lvert\sup g_n(\lambda_n(t)) - \sup g(t)\rvert +  \lvert\inf g_n(\lambda_n(t))  - \inf g(t)\rvert \\
&\le 2\lVert g_n\circ\lambda_n - g\rVert.
\end{align*}
Hence $\lim_{n\to\infty} \lvert\varphi(g_n) - \varphi(g)\rvert \le 2\lim_{n\to\infty} \lVert g_n\circ\lambda_n - g\rVert = 0$.
\end{proof}

\begin{definition}
{\normalfont (See \cite[pages 7 and 15--16]{billing99} for reference.)}
Suppose $P$ is a Borel probability measure and $\{P_n\}$ a sequence of Borel probability measures on a metric space $S$.
We say that $P_n$ \emph{converges weakly} to $P$ (denoted by $P_n \Rightarrow P$) if
$\lim_{n\to\infty}P_n g = P g$
for all bounded continuous functions $g\colon S\to\R$,
where $Pg \coloneq \int_S g\: \d P$.
A set $A\subset S$ whose boundary $\partial A$ satisfies $P(\partial A) = 0$ is called a $P$-\emph{continuity  set}.
\end{definition}

\begin{theorem}\label{thm:Pn_weak_convergence}
{\normalfont \cite[Theorem 2.1]{billing99}.}
Suppose $P$ is a Borel probability measure and $\{P_n\}$ a sequence of Borel probability measures on a metric space $S$.
Then these five conditions are equivalent:
\begin{enumerate}[label=\textnormal{(\roman*)}]
\item $P_n \Rightarrow P$.
\item $\lim_{n\to\infty}P_n g = Pg$ for all bounded, uniformly continuous functions $g\colon S\to\R$.
\item $\limsup_{n\to\infty} P_n(K) \le P(K)$ for any closed set $K\subset S$.
\item $\liminf_{n\to\infty} P_n(V) \ge P(V)$ for any open set $V\subset S$.
\item $\lim_{n\to\infty}P_n(A) = P(A)$ for any $P$-continuity set $A\subset S$. \label{thm:Pn_weak_convergence_v}
\end{enumerate}
\end{theorem}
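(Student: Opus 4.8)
The plan is to establish the classical cycle of implications $\mathrm{(i)} \Rightarrow \mathrm{(ii)} \Rightarrow \mathrm{(iii)} \Leftrightarrow \mathrm{(iv)} \Rightarrow \mathrm{(v)} \Rightarrow \mathrm{(i)}$, which yields the equivalence of all five conditions. The implication $\mathrm{(i)} \Rightarrow \mathrm{(ii)}$ is immediate, since every bounded uniformly continuous function is bounded and continuous. For $\mathrm{(ii)} \Rightarrow \mathrm{(iii)}$, I would write $\rho$ for the metric on $S$, fix a closed set $K \subset S$, and for each $\epsilon > 0$ set $\rho(x, K) \coloneq \inf_{y \in K} \rho(x, y)$, $K^\epsilon \coloneq \{x \in S : \rho(x, K) < \epsilon\}$, and $g_\epsilon(x) \coloneq \max\{0,\, 1 - \rho(x, K)/\epsilon\}$. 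Then $g_\epsilon$ takes values in $[0, 1]$, is $(1/\epsilon)$-Lipschitz and hence bounded and uniformly continuous, and satisfies $\1_K \le g_\epsilon \le \1_{K^\epsilon}$ because $\rho(\cdot, K)$ vanishes exactly on $K$ and $g_\epsilon$ is $0$ off $K^\epsilon$. Consequently $P_n(K) \le P_n g_\epsilon$ for all $n$, so by $\mathrm{(ii)}$ and $g_\epsilon \le \1_{K^\epsilon}$ I get $\limsup_{n\to\infty} P_n(K) \le \lim_{n\to\infty} P_n g_\epsilon = P g_\epsilon \le P(K^\epsilon)$. Since $K$ is closed, $\bigcap_{\epsilon > 0} K^\epsilon = K$ and $K^\epsilon$ decreases as $\epsilon \downarrow 0$, so continuity of the finite measure $P$ from above gives $P(K^\epsilon) \to P(K)$; letting $\epsilon \downarrow 0$ proves $\mathrm{(iii)}$.

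The equivalence $\mathrm{(iii)} \Leftrightarrow \mathrm{(iv)}$ is purely formal: $K$ is closed iff $V \coloneq S \setminus K$ is open, and since $P_n$ and $P$ are probability measures, $P_n(V) = 1 - P_n(K)$ and $P(V) = 1 - P(K)$, so the $\limsup$ bound for closed sets is equivalent to the $\liminf$ bound for open sets. To get $\mathrm{(v)}$ from $\mathrm{(iii)}$ and $\mathrm{(iv)}$, I would take a $P$-continuity set $A \subset S$ with interior $A^\circ$ and closure $\overline{A}$; since $\overline{A} \setminus A^\circ = \partial A$ and $P(\partial A) = 0$, one has $P(A^\circ) = P(A) = P(\overline{A})$, and then
\[
P(\overline{A}) \ge \limsup_{n\to\infty} P_n(\overline{A}) \ge \limsup_{n\to\infty} P_n(A) \ge \liminf_{n\to\infty} P_n(A) \ge \liminf_{n\to\infty} P_n(A^\circ) \ge P(A^\circ) = P(\overline{A}),
\]
so every term equals $P(A)$ and therefore $\lim_{n\to\infty} P_n(A) = P(A)$.

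For $\mathrm{(v)} \Rightarrow \mathrm{(i)}$, let $g \colon S \to \R$ be bounded and continuous. Since $g + M \ge 0$ for $M$ large, $g + M$ is bounded, and $P_n(g + M) = P_n g + M$, $P(g + M) = P g + M$, I may assume $0 \le g \le M'$ for some $M' > 0$. By the distribution-function form of the integral (Fubini's theorem), $P_n g = \int_0^{M'} P_n(\{g > t\})\, \d t$ and $P g = \int_0^{M'} P(\{g > t\})\, \d t$. For each $t$ the set $\{g > t\}$ is open (as $g$ is continuous) and $\partial\{g > t\} \subseteq \{g = t\}$; the level sets $\{g = t\}$ ($t \in \R$) are pairwise disjoint, so only countably many have positive $P$-measure, and hence $\{g > t\}$ is a $P$-continuity set for all but countably many $t \in [0, M']$. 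For each such $t$, $\mathrm{(v)}$ gives $P_n(\{g > t\}) \to P(\{g > t\})$, and since the integrands are bounded by $1$ on the finite-length interval $[0, M']$, the bounded convergence theorem yields $P_n g \to P g$, which is $\mathrm{(i)}$.

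The main obstacle will be the two analytic steps. In $\mathrm{(ii)} \Rightarrow \mathrm{(iii)}$ one must construct the explicit $(1/\epsilon)$-Lipschitz function $g_\epsilon$ squeezed between $\1_K$ and $\1_{K^\epsilon}$ and then pass to the limit using continuity of $P$ from above; in $\mathrm{(v)} \Rightarrow \mathrm{(i)}$ one must reduce convergence of integrals of a continuous function to convergence of measures of sets through the layer-cake decomposition, taking care to discard the at most countably many exceptional levels $t$ at which $\{g > t\}$ fails to be a $P$-continuity set. The remaining implications $\mathrm{(i)} \Rightarrow \mathrm{(ii)}$, $\mathrm{(iii)} \Leftrightarrow \mathrm{(iv)}$, and $\mathrm{(iii)} \wedge \mathrm{(iv)} \Rightarrow \mathrm{(v)}$ are formal.
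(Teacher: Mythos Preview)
Your argument is correct and is essentially the standard proof of the Portmanteau theorem as given in the cited reference \cite[Theorem~2.1]{billing99}. Note, however, that the paper does not supply its own proof of this statement: it merely quotes the result from Billingsley, so there is no ``paper's proof'' to compare against beyond the original source. Your cycle $\mathrm{(i)}\Rightarrow\mathrm{(ii)}\Rightarrow\mathrm{(iii)}\Leftrightarrow\mathrm{(iv)}\Rightarrow\mathrm{(v)}\Rightarrow\mathrm{(i)}$, the Lipschitz approximant $g_\epsilon$ for $\mathrm{(ii)}\Rightarrow\mathrm{(iii)}$, and the layer-cake argument for $\mathrm{(v)}\Rightarrow\mathrm{(i)}$ are exactly the ingredients Billingsley uses.
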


\begin{definition}
{\normalfont (See \cite[pages 24--26]{billing99} for reference.)}
We call a map from a probability space $(\Omega, \A, P)$ to a metric space $S$
a \emph{random element} if it is Borel measurable.
(As is customary, we call it a random variable if, in addition, $S=\R$.)
Suppose $Z$ is a random element and $\{Z_n\}$ a sequence of random elements from $(\Omega, \A, P)$ to $S$.
The \emph{law} of $Z$ is the Borel probability measure $L_Z \coloneq P\circ Z^{-1}$ on $S$.
We say that $Z_n$ \emph{converges in distribution} to $Z$ (denoted by $Z_n \Rightarrow Z$) if
$L_{Z_n} \Rightarrow L_Z$.
A set $A\subset S$ with $P(Z^{-1}(\partial A)) = 0$ is called a \emph{$Z$-continuity set}.
\end{definition}

\begin{theorem}\label{thm:Zn_convergence_in_distribution}
{\normalfont \cite[page 26]{billing99}.}
Suppose $Z$ is a random element and $\{Z_n\}$ a sequence of random elements from a probability space $(\Omega, \A, P)$ to a metric space $S$.
Then these five conditions are equivalent:
\begin{enumerate}[label=\textnormal{(\roman*)}]
\item $Z_n \Rightarrow Z$.
\item $\lim_{n\to\infty}\E[g(Z_n)] = \E[g(Z)]$ for all bounded, uniformly continuous functions $g\colon S\to\R$.
\item $\limsup_{n\to\infty} P(Z_n\in K) \le P(Z\in K)$ for any closed set $K\subset S$.
\item $\liminf_{n\to\infty} P(Z_n\in V) \ge P(Z\in V)$ for any open set $V\subset S$.
\item $\lim_{n\to\infty}P(Z_n\in A) = P(Z\in A)$ for any $Z$-continuity set $A\subset S$.
\end{enumerate}
\end{theorem}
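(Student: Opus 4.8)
The plan is to deduce \Cref{thm:Zn_convergence_in_distribution} from \Cref{thm:Pn_weak_convergence} by transporting every statement through the laws $L_{Z_n}\coloneq P\circ Z_n^{-1}$ and $L_Z\coloneq P\circ Z^{-1}$, which are Borel probability measures on $S$ because $Z_n$ and $Z$ are random elements. Writing $P_n\coloneq L_{Z_n}$ and $\wt P\coloneq L_Z$, the whole point is that each of the five conditions in the theorem is, verbatim, the corresponding condition of \Cref{thm:Pn_weak_convergence} for the sequence $\{P_n\}$ and the limit $\wt P$; once this dictionary is in place, the five-way equivalence is immediate.

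First I would record the change-of-variables identity $\E[g(Z_n)]=\int_S g\,\d L_{Z_n}=P_n g$, and likewise $\E[g(Z)]=\wt P g$, valid for every bounded Borel function $g\colon S\to\R$. This is the only computation in the argument: it holds for $g=\1_B$ with $B\in\B(S)$ directly from the definition of the image measure, extends to simple functions by linearity, and to arbitrary bounded $g$ by a routine approximation. Consequently condition~(ii) of the theorem is exactly condition~(ii) of \Cref{thm:Pn_weak_convergence} for $\{P_n\}$ and $\wt P$; and $Z_n\Rightarrow Z$ means by definition $L_{Z_n}\Rightarrow L_Z$, i.e.\ $P_n\Rightarrow\wt P$, so condition~(i) here is condition~(i) there.

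Next I would translate the remaining conditions set-theoretically. For any $B\in\B(S)$ we have $P(Z_n\in B)=P\bigl(Z_n^{-1}(B)\bigr)=P_n(B)$ and $P(Z\in B)=\wt P(B)$, so conditions~(iii) and~(iv) here are precisely conditions~(iii) and~(iv) there, applied to closed and to open subsets of $S$ respectively. Finally, a $Z$-continuity set is by definition a set $A\subset S$ with $P\bigl(Z^{-1}(\partial A)\bigr)=0$, that is, $\wt P(\partial A)=0$, which is exactly the condition that $A$ be a $\wt P$-continuity set; combined with $P(Z_n\in A)=P_n(A)$ and $P(Z\in A)=\wt P(A)$, this identifies condition~(v) here with condition~(v) there. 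With all five conditions matched, the equivalence asserted by the theorem is precisely the one furnished by \Cref{thm:Pn_weak_convergence}.

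There is essentially no obstacle here: the substantive ingredient---the portmanteau theorem for Borel probability measures on a metric space---is already available as \Cref{thm:Pn_weak_convergence}, and everything else is bookkeeping. The only place that calls for a moment's care is checking the change-of-variables formula for bounded \emph{measurable} (rather than merely continuous) $g$, which is needed to make the identification of condition~(ii) fully rigorous, but this is standard measure theory.
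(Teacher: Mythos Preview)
Your proposal is correct and is exactly the standard derivation: push the laws $L_{Z_n}$ and $L_Z$ onto $S$ and read off the portmanteau theorem for measures. Note, however, that the paper does not prove this statement at all; it is simply quoted from \cite[page~26]{billing99}, so there is no in-paper proof to compare against---your argument is precisely the one Billingsley gives at the cited location.
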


\begin{theorem}\label{thm:ifUniformDist_converge}
{\normalfont \cite[page 20]{billing99}.}
Suppose $P$ is a Borel probability measure and $\{P_n\}$ a sequence of Borel probability measures on a metric space $S$.
Let $S'$ be another metric space and $h\colon S\to S'$ a continuous map.
If $P_n \Rightarrow P$, then $P_n\circ h^{-1} \Rightarrow P\circ h^{-1}$.
\end{theorem}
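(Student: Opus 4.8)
The plan is to verify the defining condition of weak convergence directly, via the change-of-variables formula for image (pushforward) measures. First I would note that since $h$ is continuous it is Borel measurable, so $P_n\circ h^{-1}$ and $P\circ h^{-1}$ are genuinely Borel probability measures on $S'$ and the statement is meaningful. Then fix an arbitrary bounded continuous $g\colon S'\to\R$. The composition $g\circ h\colon S\to\R$ is bounded (because $g$ is) and continuous (as a composition of continuous maps), hence it is an admissible test function for the hypothesis $P_n\Rightarrow P$, which gives $\lim_{n\to\infty}\int_S (g\circ h)\,\d P_n = \int_S (g\circ h)\,\d P$.

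Next I would invoke the change-of-variables identity $\int_{S'} g\,\d(\mu\circ h^{-1}) = \int_S (g\circ h)\,\d\mu$, valid for every bounded Borel-measurable $g$ and every Borel probability measure $\mu$ on $S$; this is proved by the standard machine (first for indicators $g=\1_A$ with $A\in\B(S')$, then for nonnegative simple functions by linearity, then for general bounded measurable $g$ by a dominated-convergence passage to the limit). Applying it with $\mu=P_n$ and with $\mu=P$ converts the limit above into $\lim_{n\to\infty}\int_{S'} g\,\d(P_n\circ h^{-1}) = \int_{S'} g\,\d(P\circ h^{-1})$. Since $g$ was an arbitrary bounded continuous function on $S'$, this is precisely the statement $P_n\circ h^{-1}\Rightarrow P\circ h^{-1}$, completing the argument.

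I do not expect a genuine obstacle here: the only points needing (routine) care are the measurability remark that legitimizes the pushforwards and the change-of-variables formula for image measures. As an alternative route one could use the portmanteau characterization of \Cref{thm:Pn_weak_convergence}: for any closed $K\subset S'$, the preimage $h^{-1}(K)$ is closed in $S$ by continuity, so $\limsup_{n\to\infty}(P_n\circ h^{-1})(K) = \limsup_{n\to\infty}P_n(h^{-1}(K)) \le P(h^{-1}(K)) = (P\circ h^{-1})(K)$, and then conclude by the closed-set criterion applied in $S'$. I would present the test-function version, as it is the most economical.
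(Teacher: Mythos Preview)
Your argument is correct and entirely standard; both the test-function route and the closed-set portmanteau route you sketch are valid and complete. Note, however, that the paper does not supply its own proof of this theorem: it simply cites \cite[page 20]{billing99} and uses the result as a black box, so there is no in-paper argument to compare against.
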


\begin{remark}[See {\cite[page 135]{billing99}} for reference]
  Note that $Y \colon (\Omega, \mathcal A, P) \to D[0, 1]$
  is a random element if and only if
  $Y_{\bullet} \colon [0, 1] \times \Omega \to \R$ is a stochastic process
  (i.e., $Y_t\colon \Omega \to \R$ is a random variable for all $t \in [0, 1]$).
\end{remark}

\begin{theorem}\label{thm:D2_FnF_limit_proof}
Suppose $X_1,\ldots,X_n$ are i.i.d.\ random variables on a probability space $(\Omega, \A, P)$ with a continuous distribution function $F\in\F$, and $F_n(t) = \sum_{i=1}^n\1_{(-\infty,t]}(X_i)/n$ for $t\in\R$.
Then
\begin{equation}\label{eq:ifUniformDist_lim}
\lim_{n\to\infty} P\biggl(D_2(F_n, F) \ge \frac{a}{\sqrt{n}}\biggr) = 2\sum_{i=1}^\infty (4i^2 a^2 -1) \exp( -2i^2 a^2)
\end{equation}
for any $a>0$.
\end{theorem}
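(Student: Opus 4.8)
The plan is to reduce to the uniform case by \Cref{thm:D_q_distribution}, rewrite $\sqrt{n}\,D_2(F_n,F)$ as a continuous functional of the empirical process on $[0,1]$, and then combine Donsker's theorem with the explicit Brownian-bridge law in \Cref{thm:supinf_brown_conv}. Concretely: by \Cref{thm:D_q_distribution} the law of $D_2(F_n,F)$ is the same for every continuous $F$, so it is enough to treat $F=U$, with $X_1,\dots,X_n$ i.i.d.\ uniform on $(0,1)$ and empirical distribution function $U_n$. Since $X_i\in(0,1)$ almost surely, $\delta_{U_n,U}(x)=U_n(x)-x$ vanishes for $x\le 0$ and for $x\ge 1$ almost surely, while $U(x)=x$ on $[0,1]$; hence $\overline{\delta}_{U_n,U}=\sup_{t\in[0,1]}(U_n(t)-t)$ and $\underline{\delta}_{U_n,U}=\inf_{t\in[0,1]}(U_n(t)-t)$ almost surely. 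Setting $\alpha_n(t)\coloneq U_n(t)-t$ for $t\in[0,1]$, which is a random element of $\D$, \Cref{lem:D2_delta} gives $D_2(U_n,U)=\varphi(\alpha_n)$ almost surely, where $\varphi$ is the functional of \Cref{thm:D_continuous}; as $\varphi$ is positively homogeneous, $\sqrt{n}\,D_2(U_n,U)=\varphi(\sqrt{n}\,\alpha_n)$ almost surely.

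By Donsker's theorem (see, e.g., \cite[Section 14]{billing99}), the random elements $\sqrt{n}\,\alpha_n$ of $\D$ converge in distribution to a Brownian bridge $Y$. Since $\varphi\colon\D\to\R$ is continuous (\Cref{thm:D_continuous}), applying \Cref{thm:ifUniformDist_converge} to the laws of $\sqrt{n}\,\alpha_n$ and of $Y$ with $h=\varphi$ shows that $\varphi(\sqrt{n}\,\alpha_n)$ converges in distribution to $\varphi(Y)=\sup_{t\in[0,1]}Y_t-\inf_{t\in[0,1]}Y_t$ as real random variables. To pass to the tail probabilities, apply the portmanteau statement \Cref{thm:Zn_convergence_in_distribution}(v) to the set $A=[a,\infty)\subset\R$: its boundary is $\{a\}$, and $P(\varphi(Y)=a)=0$ because \Cref{thm:supinf_brown_conv} identifies $a\mapsto P(\varphi(Y)\ge a)=2\sum_{i=1}^\infty(4i^2a^2-1)\exp(-2i^2a^2)$, a function that is continuous on $(0,\infty)$ since the series converges uniformly on each compact subinterval, so the law of $\varphi(Y)$ has no atoms. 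Hence $\lim_{n\to\infty}P(\sqrt{n}\,D_2(U_n,U)\ge a)=P(\varphi(Y)\ge a)=2\sum_{i=1}^\infty(4i^2a^2-1)\exp(-2i^2a^2)$, and rewriting $P(\sqrt{n}\,D_2(U_n,U)\ge a)=P(D_2(U_n,U)\ge a/\sqrt{n})$ and undoing the reduction to $F=U$ yields \Cref{eq:ifUniformDist_lim}.

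The reduction to $F=U$, the homogeneity of $\varphi$, and the rescaling bookkeeping are routine. I expect the substantive step to be the invocation of Donsker's theorem, i.e.\ weak convergence of the empirical process to a Brownian bridge in the Skorohod space $\D$: this is where the probabilistic content sits, and it must either be cited carefully or, for a self-contained treatment, established via convergence of finite-dimensional distributions together with tightness. A smaller but necessary point is checking that $a$ is a continuity point of the limiting law, so that convergence in distribution upgrades to convergence of $P(\,\cdot\ge a)$; this is taken care of by the explicit series above.
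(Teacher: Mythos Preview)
Your proposal is correct and follows essentially the same approach as the paper: both reduce to the uniform case, identify $\sqrt{n}\,D_2$ with the continuous range functional $\varphi$ applied to the uniform empirical process, invoke Donsker's theorem and the continuous mapping theorem, and then use \Cref{thm:supinf_brown_conv} together with the portmanteau condition on the $L_Y\circ\varphi^{-1}$-continuity set $[a,\infty)$. The only cosmetic difference is that you invoke \Cref{thm:D_q_distribution} up front to pass to $F=U$, whereas the paper works with $Y_t^n=\sqrt{n}(U_n(t)-U(t))$ and transfers back to general continuous $F$ at the end via $F_n=U_n\circ F$.
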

\begin{proof}
Let $U_n$ be defined as in \Cref{sec:proof.D_q_distribution} on $(\Omega, \A, P)$, $Y_t^n = \sqrt{n}(U_n(t) - U(t))$ for $t\in[0, 1]$, and $Y\colon [0, 1]\times\Omega \to \R$ a Brownian bridge.
Then $Y^n$ and $Y$ are random elements on $(\Omega, \A, P)$ to $D[0, 1]$, and 
$Y^n \Rightarrow Y$ (which means $L_{Y^n}\Rightarrow L_{Y}$) by \cite[Theorem 14.3]{billing99}.
Since 
\begin{equation*}
L_{Y^n}\circ\varphi^{-1}([a, \infty)) = P\biggl(\sup_{t\in[0, 1]}Y_t^n - \inf_{t\in[0, 1]}Y_t^n \ge a\biggr),
\end{equation*}
\begin{align*}
L_{Y}\circ\varphi^{-1}([a, \infty)) &= P\biggl(\sup_{t\in[0, 1]}Y_t - \inf_{t\in[0, 1]}Y_t \ge a\biggr)\\
&= 2\sum_{i=1}^\infty (4i^2 a^2 -1) \exp(-2i^2 a^2)
\end{align*}
for any $a>0$
by \cref{thm:supinf_brown_conv}, and $\lim_{n\to\infty}(L_{Y^n}\circ\varphi^{-1})([a, \infty))= (L_{Y}\circ\varphi^{-1})([a, \infty))$ by \cref{thm:D_continuous,thm:ifUniformDist_converge,thm:Pn_weak_convergence}, we obtain 
\begin{equation}\label{eq:lim_supinf_a}
\lim_{n\to\infty} P\biggl(\sup_{t\in[0, 1]}Y_t^n - \inf_{t\in[0, 1]}Y_t^n \ge a\biggr) = 2\sum_{i=1}^\infty (4i^2 a^2 -1) \exp(-2i^2 a^2) \qquad (a>0).
\end{equation}
Note that $[a, \infty)$ is an $L_Y \circ \varphi^{-1}$-continuity set.
It holds almost surely that $\sup_{t\in[0, 1]}Y_t^n = \sup_{t\in (0, 1)}Y_t^n$, $\inf_{t\in[0, 1]}Y_t^n = \inf_{t\in (0, 1)}Y_t^n$.
The continuity of $F$ implies that $(0, 1)\subset F(\R) \subset [0, 1]$.
Therefore, we have 
\begin{align*}
&P\biggl(\sup_{t\in[0, 1]}Y_t^n - \inf_{t\in[0, 1]}Y_t^n \ge a\biggr)\\
&= P\biggl(\sup_{x\in\R}Y_{F(x)}^n - \inf_{x\in\R}Y_{F(x)}^n \ge a\biggr)\\
&= P\biggl(\sup_{x\in\R} (U_n(F(x)) - U(F(x)))- \inf_{x\in\R} (U_n(F(x)) - U(F(x)))\ge \frac{a}{\sqrt{n}}\biggr)\\
&= P\biggl(\sup_{x\in\R} (F_n(x) - F(x))- \inf_{x\in\R} (F_n(x) - F(x))\ge \frac{a}{\sqrt{n}}\biggr)\\
&= P\biggl(D_2(F_n, F)\ge \frac{a}{\sqrt{n}}\biggr)
\end{align*}
by \Cref{cor:P(Xd)=P(X),cor:Fn=Un_F,lem:D2_delta}.
With \Cref{eq:lim_supinf_a}, we obtain \Cref{eq:ifUniformDist_lim}.
\end{proof}

This theorem is equivalent to \Cref{thm:D2_FnF_limit}.

\section*{Acknowledgement}
We thank Akitomo Amakawa (University of Yamanashi) for the validation of our numerical results.

\section*{Funding}
This study was partially supported by JSPS KAKENHI Grant Numbers JP21K15762, JP20K03509, and JP24K06686.

\section*{Supplementary Material}
\label{supplementary-material}
The source code of the experiment in \Cref{sec:experiments}.
The following packages are required:
\begin{itemize}[nosep]
  \item \texttt{numpy==1.26.4}
  \item \texttt{pandas==2.2.2}
  \item \texttt{scipy==1.13.0}
  \item \texttt{matplotlib==3.8.4}
\end{itemize}
This work \textcopyright~2024 by Atsushi Komaba is licensed under Creative Commons Attribution 4.0 International. To view a copy of this license, visit \url{https://creativecommons.org/licenses/by/4.0/}
\lstset{
  language=Python,
  basicstyle={\footnotesize\ttfamily},
  keywordstyle={\color{blue}},
}
\begin{lstlisting}
# Copyright (c) 2024 Atsushi Komaba

import math
import numpy as np
import pandas as pd
from scipy import stats
from matplotlib import pyplot as plt

# The number of d2 values computed to obtain its empirical distribution
num_d2s_under_h0 = 100000
# Repetition time for the experiments
num_repeat = 100000
# Sample sizes used in the experiments
sample_sizes = [2**i for i in range(3, 13)]


# Distributions used in the experiments
class Mixture:
  def rvs(self, size, random_state):
    ret = 1.0 * random_state.integers(2, size=size)
    i = ret == 0
    ret[i] = stats.norm.rvs(
      -4 / 5,
      3 / 5,
      ret[i].size,
      random_state=random_state,
    )
    ret[~i] = stats.norm.rvs(
      4 / 5,
      3 / 5,
      ret[~i].size,
      random_state=random_state,
    )
    return ret

  def cdf(self, xs):
    return (
      stats.norm.cdf(xs, -4 / 5, 3 / 5) + stats.norm.cdf(xs, 4 / 5, 3 / 5)
    ) / 2

  def pdf(self, xs):
    return (
      stats.norm.pdf(xs, -4 / 5, 3 / 5) + stats.norm.pdf(xs, 4 / 5, 3 / 5)
    ) / 2


dists = {
  "Normal(0, 1)": stats.norm(),
  "Normal(0.2, 1)": stats.norm(0.2, 1),
  "Normal(0, 1.1)": stats.norm(0, 1.1),
  "Trapezoidal": stats.trapezoid(
    (2 - math.sqrt(2)) / 4,
    (2 + math.sqrt(2)) / 4,
    -2,
    4,
  ),
  "Mixture": Mixture(),
}
dist_pairs = [
  ["Normal(0.2, 1)", "Normal(0, 1)"],
  ["Normal(0, 1.1)", "Normal(0, 1)"],
  ["Trapezoidal", "Normal(0, 1)"],
  ["Mixture", "Normal(0, 1)"],
  ["Trapezoidal", "Mixture"],
  ["Mixture", "Trapezoidal"],
]
# Significance level
significance_level = 0.05

# Random generator with fixed seed
rng = np.random.default_rng(seed=0)


# Computes the D2 statistic between `cdf`
# and the empirical distribution function based on `xs`.
def d2(xs, cdf, axis=0):
  ys = np.repeat(cdf(np.sort(xs, axis)), 2, axis)
  zs = np.repeat(np.linspace(0, 1, xs.shape[axis] + 1), 2)
  ds = ys - zs[(slice(1, -1),) + (None,) * (xs.ndim - axis - 1)]
  return np.max(ds, axis) - np.min(ds, axis)


# Precomputation of D2 values under the null hypothesis H0.
# Compares samples from the standard uniform distribution U with U itself
uniform = stats.uniform()
d2s_under_h0 = {}
print(f"# Precomputation of D2 under H0 ({num_d2s_under_h0:,} each)")
for sample_size in sample_sizes:
  print(
    f"Sample size: {sample_size:,} ... ",
    end="",
    flush=True,
  )
  xs = uniform.rvs((sample_size, num_d2s_under_h0), random_state=rng)
  d2s_under_h0[sample_size] = np.sort(d2(xs, uniform.cdf))
  print("Done")


# Draws the graphs of the empirical distribution functions obtained above.
def draw_d2_under_h0():
  sample_sizes = [2**i for i in range(9, 2, -2)]
  num_sample_sizes = len(sample_sizes)
  fig = plt.figure(figsize=(8, 3))
  ax = fig.subplots()
  ax.set_xlim(0.5, 2.5)
  # Computes the theoretical asymptotic distribution function.
  # Relative errors <= 1e-10.
  a = np.linspace(0.5, 2.5, 1000)
  i = np.arange(1, 10)
  sq = np.outer(a, i) ** 2
  asymp_cdf = 1 - 2 * np.sum((4 * sq - 1) * np.exp(-2 * sq), axis=1)
  asymp_cdf[a < 0.4] = 0
  ax.plot(
    a,
    asymp_cdf,
    color=str(1 - 1 / (num_sample_sizes + 1)),
    label=r"$n \rightarrow \infty$ (asymptotic)",
  )
  # Expirical distribution function of D2 under H0.
  for i, sample_size in enumerate(sample_sizes):
    ax.ecdf(
      d2s_under_h0[sample_size] * math.sqrt(sample_size),
      label=f"$n = {sample_size}$ (empirical)",
      color=str((num_sample_sizes - i - 1) / (num_sample_sizes + 1)),
      linestyle=(0, (num_sample_sizes - i, 1)),
    )
  handles, labels = ax.get_legend_handles_labels()
  ax.legend(handles[::-1], labels[::-1])
  ax.set_xlabel(r"$\sqrt{n}D_2(U_n, U)$")
  fig.tight_layout()
  fig.savefig("d2_under_h0.pdf")


draw_d2_under_h0()

# PDFs
fig = plt.figure(figsize=(6.4, 3.2))


def dists_pdf(ax, dists_name):
  for name, linestyle in zip(dists_name, ["-", "--", ":"]):
    xs = np.arange(-4, 4, 0.01)
    ax.plot(
      xs,
      dists[name].pdf(xs),
      label=name,
      color="black",
      linestyle=linestyle,
    )
  ax.legend()


dists_pdf(
  fig.add_subplot(211),
  ["Normal(0, 1)", "Normal(0.2, 1)", "Normal(0, 1.1)"],
)
dists_pdf(
  fig.add_subplot(212),
  ["Normal(0, 1)", "Trapezoidal", "Mixture"],
)
fig.tight_layout()
fig.savefig("dists_pdf.pdf")

print(f"# The experiment ({num_repeat:,} trial each)")
fig = plt.figure(figsize=(2 * 6.4, 2 * 4.8))
for idx, [dist_name, ref_name] in enumerate(dist_pairs):
  print(f"Sampling distribution: {dist_name}, reference distribution: {ref_name}")
  dist = dists[dist_name]
  ref = dists[ref_name].cdf
  ax = fig.add_subplot(3, 2, idx + 1, xscale="log")
  statistical_powers = pd.DataFrame()
  for sample_size in sample_sizes:
    print(f"  Sample size: {sample_size:,}")
    xs = dist.rvs((sample_size, num_repeat), random_state=rng)

    print(
      "    Kolmogorov-Smirnov test: ",
      end="",
      flush=True,
    )
    pvalue = stats.ks_1samp(xs, ref).pvalue
    num_rejected = np.count_nonzero(pvalue < significance_level)
    statistical_powers.loc[sample_size, "Kolmogorov-Smirnov"] = (
      num_rejected / num_repeat
    )
    print(f"rejected {num_rejected:,} out of {num_repeat:,}")

    print(
      "    Cram\xe9r-von Mises test: ",
      end="",
      flush=True,
    )
    pvalue = stats.cramervonmises(xs, ref).pvalue
    num_rejected = np.count_nonzero(pvalue < significance_level)
    statistical_powers.loc[sample_size, "Cram\xe9r-von Mises"] = (
      num_rejected / num_repeat
    )
    print(f"rejected {num_rejected:,} out of {num_repeat:,}")

    print("    OVL-2: ", end="", flush=True)
    pvalue = (
      1
      - np.searchsorted(d2s_under_h0[sample_size], d2(xs, ref)) / num_d2s_under_h0
    )
    num_rejected = np.count_nonzero(pvalue < significance_level)
    statistical_powers.loc[sample_size, "OVL-2"] = num_rejected / num_repeat
    print(f"rejected {num_rejected:,} out of {num_repeat:,}")
  ax.set_title(
    f"Sampling distribution: {dist_name}, reference distribution: {ref_name}"
  )
  ax.set_ylim(-0.05, 1.05)
  statistical_powers.plot(ax=ax, style=[".-k", "x--k", "+:k"])
  ax.legend()
fig.tight_layout()
fig.savefig("result.pdf")
\end{lstlisting}
\label{supplementary-material-end}

\bibliographystyle{abbrv}
\bibliography{bibliography}

\end{document}